\documentclass[11pt]{paper}

\usepackage{amssymb,amsfonts,amsmath,amsthm,mathrsfs}
\usepackage{mathtools}

\makeatletter
\newcommand{\subjclass}[2][1991]{%
  \let\@oldtitle\@title%
  \gdef\@title{\@oldtitle\footnotetext{#1 \emph{Mathematics subject classification.} #2}}%
}
\makeatother

\usepackage{graphicx, verbatim}
\usepackage{xcolor}
\usepackage{tikz}
\usetikzlibrary{shapes.geometric}
\usetikzlibrary{arrows.meta}
\usetikzlibrary{patterns}

\usepackage[a4paper, hmargin=3cm, vmargin={4cm, 4cm}]{geometry}
\usepackage[english]{babel}
\usepackage[colorlinks=true, linkcolor=black, citecolor=blue, urlcolor=blue]{hyperref}
\usepackage[final,tracking=true,kerning=true,spacing=true,
factor=1000,stretch=20,shrink=20,babel=true]{microtype}
\microtypecontext{spacing=nonfrench}
\usepackage{fancyhdr}

\newtheorem{theorem}{Theorem}
\newtheorem{lemma}{Lemma}

\newtheorem{corollary}{Corollary}

\newtheorem{claim}{Claim}

\newtheorem*{theorem*}{Theorem}
\newtheorem*{question*}{Question}
\newtheorem*{example*}{Example}
\newtheorem*{definition*}{Definition}
\newtheorem*{remark*}{Remark}

\def\done{{1\hskip-2.5pt{\rm l}}}

\newcommand{\bR}{\mathbb R}
\newcommand{\bC}{\mathbb C}
\newcommand{\bZ}{\mathbb Z}

\newcommand{\bN}{\mathbb N}
\newcommand{\bQ}{\mathbb Q}

\newcommand{\bP}{\mathbb P}

\newcommand{\cE}{\mathcal E}
\newcommand{\cZ}{\mathcal Z}
\newcommand{\cD}{\mathcal D}
\newcommand{\cC}{\mathcal C}
\newcommand{\cK}{\mathcal K}

\newcommand{\sW}{\mathscr W}

\newcommand{\eps}{\varepsilon}
\newcommand{\la}{\lambda}
\renewcommand{\phi}{\varphi}

\begin{document}

\title{
A measurable equivariant Weierstrass theorem
}

\author{Konstantin Slutsky, Mikhail Sodin, Aron Wennman}

\maketitle

\bigskip

\hfill{\textit{To Nick Makarov, a friend, colleague, and remarkable mathematician}}

\abstract{ This paper is a prequel to our recent work, ``Equivariant Borel liftings in complex
  analysis and PDE'' (arXiv:2507.12058). While the results presented here were established in that
  work in a more general and abstract setting, the purpose of this paper is to provide a direct
  proof of the equivariant Weierstrass theorem. It states that there exists a Borel map assigning to
  each non-periodic positive divisor $\Lambda$ an entire function $F_\Lambda$ such that the divisor
  of zeroes of $F_\Lambda$ is $\Lambda$ and such that $F_{\Lambda-w}(z) = F_\Lambda (z+w)$,
  $w\in\bC$. In general, non-periodicity cannot be omitted, and Borel measurability cannot be
  strengthened to continuity. The two key ingredients are the Runge approximation theorem and the
  existence of ``Borel toasts'', which are Borel counterparts of Rokhlin towers from ergodic theory.

We do not assume prior knowledge of descriptive set theory and have aimed to make
the exposition self-contained, aside from several results taken from graduate textbooks.
}

\section{Introduction and statement of main results}

Our starting point is the following question. Let $\cE$ denote the space of entire functions
of one complex variable that do not vanish identically, let $\cD$ denote the space of divisors (discrete multisets) in $\bC$,
and let \[ Z\colon \cE\to \cD \] be the map which assigns to each $F\in \cE$ its zero set $Z_F\in\cD$.
The Weierstrass theorem states that the map $Z$ has a right inverse, i.e., a map
\[ W\colon \cD\to \cE\]
satisfying $Z \circ W = {\sf id}_{\cD}$. Note that the additive group $\bC$ acts on $\cE$ and $\cD$
by translations \[ T_w F(z)=F(z+w), \quad F\in \cE, \] and
\[ T_w \Lambda = \Lambda - w, \quad \Lambda\in \cD,\]
and that the zero map
$Z$ respects these actions, $T_w Z = Z  T_w$, $w\in\bC$. We will be concerned with the following question.

\begin{question*} Does there exist an equivariant Weierstrass map, i.e., a function $\sW\colon \cD\to \cE$ such that
\[ Z \circ \sW = {\sf id}_{\cD} \ {\rm and}\  T_w \sW = \sW  T_w\]
for all $w\in\bC$?
\end{question*}

An evident obstacle is that there exist doubly periodic divisors in $\cD$, while there are no
non-constant doubly periodic entire functions. So, for the time being, we restrict ourselves to the
free part $\cD_0$ of $\cD$, which consists of non-periodic divisors.  Then the equivariant map
$\sW\colon \cD_0\to \cE$ can be readily constructed using the axiom of choice, which allows us to
select a representative $\Lambda_\mathcal O$ in each $\bC$-orbit $\mathcal O\subset\cD_0$. Since the
action $\bC\curvearrowright \cD_0$ is free, for any $\Lambda\in\mathcal O$, there is a unique
$w\in\bC$ such that $\Lambda=T_w \Lambda_\mathcal O$.  Then, we apply the classical Weierstrass map
$W$ to $\Lambda_\mathcal O$, and set \[ \sW_\Lambda = T_{w} W_{\Lambda_\mathcal O}. \]

The spaces $\cE$ and $\cD$ are endowed with natural topologies, namely, the topology of uniform
convergence on compact sets and the topology of vague convergence of Radon measures, which turn
these spaces into Polish spaces.  The actions $\bC\curvearrowright \cE$ and
$\bC\curvearrowright \cD$ are continuous and the map $Z$ is also continuous (by Hurwitz' theorem).
Answering a question by Eremenko, Remling~\cite{Remling} constructed a continuous right inverse to $Z$.
It is therefore natural to ask whether there exists a {\em continuous equivariant} Weierstrass map
from $\cD_0$ to $\cE$.  A brief reflection shows that such a map cannot exist. Indeed, $\cD_0$
contains non-trivial invariant compact sets\footnote {Perhaps, the simplest way to construct such
  compact invariant sets is to take a non-periodic almost periodic subset of $\bC$ with respect to
  the uniform transportation distance and then to take the closure of its $\bC$-orbit.  Recall that
  given two discrete sets $a=\{a_k\}_k$ and $b=\{b_k\}_k$ in $\bC$, the uniform transportation
  distance between them is given by
\[
\operatorname{dist}(a, b) =  \inf_\sigma  \sup_k |a_k - b_{\sigma(k)}|,
\]
where the infimum is taken over all bijections $\sigma\colon \bN\to \bN$.  See
Favorov--Kolbasina~\cite{FavorovKolbasina} for the details.  }; let $K$ be one of them. Then, by
continuity of $\sW$, the image $\sW(K)$ is a compact subset of $\cE$, and by equivariance of $\sW$,
it is also invariant. This contradicts the Liouville theorem since $\cE$ cannot contain non-trivial
(that is, containing non-constant entire functions) translation-invariant compact subsets.

We therefore abandon continuity and pass to Borel measurability. The spaces $\cE$ and $\cD$ are
equipped with the Borel $\sigma$-algebras generated by their open sets. Our main result asserts the
existence of a measurable equivariant Weierstrass map:
\begin{theorem}\label{thm:main}
There exists a Borel measurable equivariant map
$\sW\colon \cD_0 \to \cE$ such that \[ Z \circ \sW = {\sf id}_{\cD_0}.\]
\end{theorem}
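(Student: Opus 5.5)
We will prove Theorem~\ref{thm:main} by building $\sW$ as an infinite product $\sW_\Lambda = e^{g_\Lambda}\,\prod_n P_n(\Lambda)$. Each factor will be determined \emph{equivariantly and Borel-measurably} by a ``Borel toast'' structure on $\cD_0$. The abstract also names Runge approximation as the second ingredient. First, we rephrase equivariance. A map $\sW$ is equivariant iff $\sW_\Lambda(z)=\Phi(\Lambda-z)$ for a Borel function $\Phi\colon\cD_0\to\bC$, namely $\Phi(\Lambda)=\sW_\Lambda(0)$. Thus we must construct, on each orbit, a function of the "position" $z$ that is defined intrinsically from the pattern $\Lambda - z$.

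\textbf{Step 1: Borel toasts.} We will use the existence of a Borel toast for the free Borel action $\bC\curvearrowright\cD_0$. This is a countable increasing family of Borel "tiling" structures. For each $n$ there is a Borel assignment $\Lambda\mapsto \cK_n(\Lambda)$, a locally finite collection of pairwise disjoint compact sets (e.g.\ polygons) in $\bC$, with the following properties.
\begin{enumerate}
\item[(i)] Equivariance: $\cK_n(\Lambda-w)=\cK_n(\Lambda)-w$.
\item[(ii)] Each piece of level $n$ is contained in the interior of a piece of level $n+1$, and its boundary is well separated from the level-$(n+1)$ boundaries.
\item[(iii)] Every point of $\bC$ lies in a piece of level $n$ for all large $n$.
\end{enumerate}
We will also take pieces with connected complement and pieces of level $n$ far apart (distance $\ge n$, say). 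The pieces can be perturbed in a Borel way so that no point of $\Lambda$ lies on a boundary. We take this as known: it is obtained from Borel marker sets, i.e.\ maximal $r$-discrete Borel cross sections via the Kechris--Solecki--Todorcevic/Gao--Jackson technique, followed by Voronoi cells and iterative merging. Its existence is where non-periodicity (freeness) is used.

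\textbf{Step 2: inductive Runge construction.} Within each level-$n$ piece $K$, we let $F_K$ be a canonical function with zeros exactly $\Lambda\cap K$. For instance, $F_K(z)=\prod_{\la\in\Lambda\cap K}(z-\la)$ is canonical up to translation, since it is a finite product. We then define $\sW$ as the limit of a sequence $G_n(\Lambda)=\prod_{K\in\cK_n}F_K\cdot e^{h_n}$, where the holomorphic exponent $h_n$ corrects convergence.

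Passing from level $n$ to $n+1$, a level-$(n+1)$ piece $L$ contains finitely many level-$n$ pieces $K_j$. We need an entire non-vanishing correction on $L$ matching the old product on $\bigcup K_j$ up to error $2^{-n}$. Concretely, $\prod_{\la\in\Lambda\cap L}(z-\la)=\prod_j F_{K_j}\cdot Q$, where $Q$ is zero-free on a neighbourhood of $\bigcup_j K_j$. Since that set has connected complement, $\log Q$ exists there. By Runge's theorem we approximate $\log Q$ uniformly there, within $2^{-n}$, by a polynomial $p_L$, giving the corrected factor $F_L = e^{-p_L}\prod_{\la\in \Lambda\cap L}(z-\la)$.

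The issue is that Runge gives an approximant but not a Borel choice of it. We fix this by making the choice canonical. We recentre at a canonical point of $L$ (e.g.\ its lexicographically least point, which is equivariant), and take the polynomial of least degree, then least index in a fixed enumeration of polynomials with Gaussian-rational coefficients, meeting the strict approximation inequality. The set of $\Lambda$ for which a given candidate works is Borel, so the selection is Borel and equivariant.

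\textbf{Step 3: convergence.} By (iii), every compact set eventually lies inside a single piece, and the successive ratios are $e^{O(2^{-n})}$ there. Hence the limit $\sW_\Lambda$ exists locally uniformly, with zero divisor exactly $\Lambda$ (Hurwitz). It is Borel as a pointwise limit of Borel maps into the Polish space $\cE$, and equivariant because every ingredient is. Outside the pieces we put factor $1$. Points not yet covered at level $n$ are handled because only pieces matter on compacts for large $n$. The main obstacle is Step 1 together with the Borel-measurability bookkeeping in Step 2, especially handling the choice of logarithm branch and recentring canonically. The Runge estimates themselves are routine.
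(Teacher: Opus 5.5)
There is a genuine gap in Step 2: at no finite stage do you actually have an entire function. The object $G_n(\Lambda)=\prod_{K\in\cK_n}F_K\cdot e^{h_n}$ is a product over the infinitely many level-$n$ pieces on the orbit. The factors $F_K=e^{-p_K}\prod_{\la\in\Lambda\cap K}(z-\la)$ do not tend to $1$ locally uniformly as $K$ runs off to infinity: the $p_K$ were chosen to match data inside $K$, not to normalise $F_K$ near a distant point. So the product diverges. Normalising it would need Weierstrass-type convergence factors referred to an origin, which is exactly the non-equivariance the theorem has to overcome. The unspecified ``$h_n$ that corrects convergence'' therefore hides the whole difficulty.

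Independently of convergence, your matching condition $F_L\approx\prod_j F_{K_j}$ on $\bigcup_j K_j$ is the wrong target. It only controls the factor of $G_{n+1}/G_n$ contributed by $L$ itself. On a compact set inside $L$, that ratio also contains $F_{L'}/\prod_{K\subset L'}F_K$ for every other level-$(n+1)$ piece $L'$, and these are not close to $1$ there. So the assertion in Step 3 that successive ratios are $e^{O(2^{-n})}$ on compacts has no basis.

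The missing idea is never to form a global product over pieces. The paper carries the full divisor from the start with the Borel, non-equivariant Weierstrass product $W_{0,\Lambda}$. For each tile centre $\Lambda\in\cC_n$ it multiplies $W_{0,\Lambda}$ by a zero-free $H_{n,\Lambda}$ chosen via the several-compacta Runge claim. The targets are the zero-free entire functions $T_{\rho(\Lambda_m,\Lambda)}W_{m,\Lambda_m}/W_{0,\Lambda}$, one on each immediate-predecessor tile separately. This makes $W_{n,\Lambda}=H_{n,\Lambda}W_{0,\Lambda}$ $2^{-n}$-close there to the translated predecessor function. $W_n$ is then spread over each tile by equivariance. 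The limit is taken pointwise in $\Gamma$, with (T2)--(T3) guaranteeing that each compact set eventually lies in one maximal tile. Borelness comes from the (T5) partition, which fixes the compacta on each piece, together with a first-index choice from a dense sequence in $\cE^\times$. Your Gaussian-rational polynomial selection is the analogue of that last step and is fine.

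Your local polynomial products can also be rescued in this spirit. Attach to each piece $L$ only the entire function $F_L=e^{-p_L}\prod_{\la\in\Lambda\cap L}(z-\la)$ with zero set $\Lambda\cap L$. Demand $\|F_L/F_{K_j}-1\|_{C(K_j)}<2^{-n}$ for each sub-piece $K_j$ separately; the ratio is zero-free near $K_j$, so Runge on $\bigcup_j K_j$ applies. Then define $\sW$ near a point as the limit of $F_{K_n}$ along the nested pieces containing it; this even avoids $W_0$.

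A secondary point: your Step 1 asks for more than a Borel toast provides. You require every level-$n$ piece to lie inside a level-$(n+1)$ piece with separated boundaries, and level-$n$ pieces to be $n$-apart. The toast only gives nested-or-disjoint tiles, so you should work with the maximal tiles among the first $n$ levels, as the paper does.
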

An immediate corollary to Theorem~\ref{thm:main} is that the map $\sW$ lifts any
translation-invariant non-periodic point process on $\bC$ to a random entire function with a
translation-invariant distribution\footnote{ This corollary was proved earlier in unpublished work
  by M.~Sodin, A.~Wennman, and O.~Yakir.  The proof follows closely the approach of
  Weiss~\cite{Weiss} and yields, for each invariant probability measure $\bP$ on $\cD_0$, an
  equivariant Weierstrass map $\mathscr{W}^{\bP}$, defined on a set of full $\bP$-measure. One
  motivation for~\cite{SSW} was to understand whether there exists a single equivariant lift that
  works simultaneously for all non-periodic point processes.  }.  For instance, this yields the
existence of a random entire function with a translation-invariant distribution whose zero set is a
two-dimensional Poisson point process. It follows from~\cite{BGLS} that such entire functions always
have quite wild behaviour.

Curiously, the one-periodic case (i.e., the case of one independent period) is different.  Denote by
$\cE_{\sf per}$ and $\cD_{\sf per}$ the one-periodic parts of $\cE$ and $\cD$, respectively.  The
restriction of the divisor map $Z$ to $\cE_{\sf per}$ is a continuous surjection\footnote{The
  exponential map $z\mapsto e^{2\pi {\rm i}z/T}$, where $T$ is an independent period, transfers the
  question to surjectivity of the divisor map for analytic functions in the punctured plane
  $\bC\setminus\{0\}$, which is a classical result for arbitrary domains in $\bC$.  }
\[ Z\colon \cE_{\sf per}\to\cD\setminus \cD_0. \]
\begin{theorem}\label{thm:1-periodic}
There is no Borel measurable equivariant map $\sW\colon \cD_{\sf per}\to \cE_{\sf per}$ such that
$Z\circ \sW = {\sf id}_{\cD_{\sf per}}$.
\end{theorem}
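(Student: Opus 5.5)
The plan is to extract from a hypothetical equivariant $\sW$ an integer-valued Borel function that strictly increases along vertical translation orbits, and then to contradict this with Poincaré recurrence for a translation-invariant probability measure on $\cD_{\sf per}$. The integer invariant is a winding number. If $F\in\cE_{\sf per}$ has period $T$ and no zeros on the line $\{\operatorname{Im} z = y\}$ (taking $T>0$ after rotating coordinates in the usual way), set
\[
n(F,y) = \frac{1}{2\pi {\rm i}}\int_{{\rm i}y}^{{\rm i}y+T}\frac{F'(z)}{F(z)}\,dz .
\]
By $T$-periodicity of $F'/F$ this is an integer. Applying the argument principle to the rectangle with vertices ${\rm i}y_1,\ {\rm i}y_1+T,\ {\rm i}y_2+T,\ {\rm i}y_2$, whose vertical sides cancel by periodicity, gives
\[
n(F,y_2)-n(F,y_1) = \#\{\text{zeros of } F \text{ in one period of the strip } y_1<\operatorname{Im} z<y_2\}\ \ge 0 .
\]
Here a period is taken with respect to the minimal period of $Z_F$; any period of $F$ is a period of $Z_F$, and if the period of $F$ is a multiple of it, the count is multiplied accordingly, which changes nothing below.

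Suppose $\sW$ exists. For $\Lambda\in\cD_{\sf per}$ with no points on $\bR$, put $f(\Lambda)=n(\sW_\Lambda,0)$. We first check that $f$ is Borel on this Borel set. The period of $\Lambda$ depends in a Borel way on $\Lambda$, and the integral depends continuously on $F$ on the set where $F$ has no zeros on the segment. Equivariance gives $\sW_{T_{{\rm i}t}\Lambda}(z)=\sW_\Lambda(z+{\rm i}t)$, hence
\[
f(T_{{\rm i}t}\Lambda)=n(\sW_\Lambda,t).
\]
By the previous paragraph, $t\mapsto f(T_{{\rm i}t}\Lambda)$ is nondecreasing. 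It jumps by a positive amount at every height $t$ carrying a point of $\Lambda$. In particular, $f(T_{{\rm i}t}\Lambda)\to+\infty$ as $t\to+\infty$ whenever $\Lambda$ has points of arbitrarily large imaginary part.

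Next we need a probability measure $\bP$ on $\cD_{\sf per}$ that is invariant under the vertical flow $t\mapsto T_{{\rm i}t}$, and for which almost every $\Lambda$ has no points on $\bR$ and has points with arbitrarily large imaginary part. We take the $1$-periodization of a Poisson process. Let $\Pi$ be a Poisson process of unit intensity on the strip $[0,1)\times\bR$, and set $\Lambda=\Pi+\bZ$. Equivalently, push forward a Poisson process on $\bC\setminus\{0\}$ with intensity $|\zeta|^{-2}\,dA(\zeta)$ under a branch of $\frac{1}{2\pi{\rm i}}\log$. This process is invariant under all translations, in particular vertical ones. Almost surely it is $1$-periodic with minimal period $1$, it avoids $\bR$, and it is unbounded in both vertical directions. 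Since $\bP$ is a probability measure and $f$ is integer-valued, some level set $A=\{f=m\}$ has $\bP(A)>0$.

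Finally, apply Poincaré recurrence to the measure-preserving map $T_{\rm i}$ (the time-one vertical translation). For $\bP$-a.e. $\Lambda\in A$ there are infinitely many $k\in\bN$ with $T_{{\rm i}k}\Lambda\in A$, i.e. $f(T_{{\rm i}k}\Lambda)=m$. This contradicts $f(T_{{\rm i}k}\Lambda)\to+\infty$, and the contradiction proves Theorem~\ref{thm:1-periodic}.

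I expect the main care to be needed in the bookkeeping rather than in any hard estimate. One point is the sign conventions in the equivariance identity, so that the monotonicity goes the way claimed (if it goes the other way, the same argument runs with $t\to-\infty$). The other is to verify that $f$ is genuinely Borel and that the exceptional sets (divisors with points on the line $\bR$, and a possible mismatch between the periods of $\sW_\Lambda$ and $\Lambda$) are $\bP$-null or harmless.
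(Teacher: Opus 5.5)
Your argument is correct, and it reaches the contradiction by a genuinely different route from the paper. The paper pushes a vertically invariant probability measure on $\cD_1$ forward through $\sW$ to a $\bZ$-invariant measure on $\cE_1$. It then proves a statement about $\cE_1$ alone. By Poincar\'e recurrence for the sets $\{\sup_{[0,1]}|F|\le M\}$, almost every $F$ is bounded by $M$ on horizontal period segments at heights tending to $+\infty$ and to $-\infty$. Since a $1$-periodic entire function is bounded in every horizontal strip, Phragm\'en--Lindel\"of bounds it on $\bC$, and Liouville makes it constant.

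You instead stay on the divisor side. The winding number $f(\Lambda)=n(\sW_\Lambda,0)$ is a Borel integer-valued function. Its increment along the time-one vertical shift is minus the number of zeros of $\Lambda$ per period in the strip $0<\operatorname{Im}z<1$. So you are in effect showing that the zero-counting cocycle would be an integer-valued coboundary, which recurrence forbids. Plain invariance would also suffice: a measurable $g$ with $g\circ T_{\rm i}\le g$ almost surely satisfies $g\circ T_{\rm i}=g$ almost surely.

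Your route needs only the argument principle and no growth estimates, and it locates the obstruction in the zeros. The paper's route gives a stronger, reusable fact: $\bZ\curvearrowright\cE_1$ has no non-trivial invariant probability measure at all. So it excludes Borel equivariant maps into non-constant $1$-periodic entire functions whatever their zero sets.

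Three small corrections. First, with the counterclockwise rectangle it is $n(F,y_1)-n(F,y_2)$ that equals the zero count. So $t\mapsto n(\sW_\Lambda,t)$ is non-increasing and tends to $-\infty$; as you anticipated, this changes nothing.

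Second, your hedges about rotating coordinates and about period mismatch are unnecessary. Your measure lives on $\cD_1$, and equivariance gives $\sW_\Lambda(z+1)=\sW_{\Lambda-1}(z)=\sW_\Lambda(z)$. Hence $\sW_\Lambda$ has exactly the periods of $\Lambda$.

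Third, if zeros lie on the vertical sides of your rectangle, shift it horizontally. By periodicity, $n(F,y)$ does not depend on where the period segment starts.
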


Another non-existence result concerns the derivative $Df(z)=f'(z)$. To avoid a minor caveat, we
assume here, as well as in Section~\ref{sect:primitive}, that $\cE$ is the space of all entire
functions, including the one which identically vanishes.  Then the operator $D\colon \cE\to \cE$ is
a continuous operator, which respects the action $\bC \curvearrowright\cE$, that is, $T_w D = D T_w$
for $w\in \bC$. Nevertheless,
\begin{theorem}\label{thm:primitive}
There exists no Borel measurable equivariant right inverse to the differentiation $D$.
\end{theorem}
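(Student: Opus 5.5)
The plan is to exploit fixed points of the translation action on $\cE$. Such fixed points exist because we now allow all of $\cE$, constants included, as the domain of $D$. The argument should not use Borel measurability at all: already a set-theoretic equivariant right inverse is impossible. Suppose $P\colon \cE\to\cE$ satisfies $D\circ P={\sf id}_{\cE}$ and $T_wP=PT_w$ for all $w\in\bC$.

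\emph{Step 1.} Apply $P$ to the constant function $f\equiv 1$. Since $T_w 1 = 1$ for every $w$, equivariance gives $T_w P(1) = P(T_w 1) = P(1)$. Hence $F:=P(1)$ is invariant under all translations, i.e.\ $F(z+w)=F(z)$ for all $z,w$, so $F$ is constant.

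\emph{Step 2.} On the other hand $F' = D P(1) = 1$, so $F(z)=z+c$, which is not constant. This contradiction proves the theorem.

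Since this is so short, I would also record a more robust version, which is where Borel measurability enters. There is no Borel equivariant right inverse to $D$ even on the free part of $\cE$ (non-periodic functions), or on any invariant Borel set carrying a suitable invariant measure. The plan for that version is as follows.
\begin{itemize}
\item Choose a translation-invariant ergodic probability measure $\bP$ on non-periodic entire functions of the form $f = 1 + h$. For instance, take $h$ a Gaussian entire function with a translation-invariant distribution, suitably modified so that $\mathbb{E}\,h(0)=0$.
\item If $P$ is Borel and equivariant, then $F=P(f)$ is a random entire function whose distribution is translation-invariant.
\item For real $t$, $F(t)-F(0) = t + \int_0^t h(s)\,ds$. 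By the ergodic theorem, $\frac1t\int_0^t h\to 0$ almost surely.
\item Therefore $F(t)-F(0)$ tends to infinity in probability. This contradicts the tightness of $F(t)-F(0)$, which follows from stationarity: $F(t)$ and $F(0)$ each have a fixed distribution.
\end{itemize}
The only delicate point in this variant is choosing an ergodic, almost surely non-periodic model for $h$. I expect no real obstacle in the theorem as stated, for which Steps 1--2 suffice.
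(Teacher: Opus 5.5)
Your Steps 1--2 are correct, and they give a genuinely different and far more elementary proof than the paper's. Because $T_w1=1$, equivariance forces $P(1)$ to be translation-invariant, hence constant, which contradicts $DP(1)=1$. So on $\cE$, which in this section contains the constants, there is no equivariant right inverse of $D$ at all, Borel or not.

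The paper instead uses the measurability of $P$ essentially. It forms the Borel, translation-invariant map $\kappa(F)=F-(P\circ D)F\in\bC$, which satisfies $\kappa(F+c)=\kappa(F)+c$. Then $\kappa^{-1}(\{|z|\le 1\})$ and its complement are invariant Borel sets, and countably many constant shifts of either one cover $\cE$. Hence neither is meager, contradicting the generic ergodicity of $\bC\curvearrowright\cE$, which follows from Birkhoff's function with a dense orbit and Lemma~\ref{lemma:generic_ergod}.

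Your route shows that, for the statement as literally formulated, the obstruction is just the presence of fixed points. This is the same mechanism as the doubly periodic divisors in the introduction. The paper's route never looks at fixed points. It survives when $P$ is defined only on an invariant comeager Borel set, e.g.\ the non-periodic functions with dense orbits; this works because $D$ is an open surjection, so $D^{-1}$ of a comeager set is comeager. On that free part an equivariant primitive does exist by the axiom of choice, exactly as for the Weierstrass map on $\cD_0$. So there the Baire-category argument detects a genuinely descriptive-set-theoretic obstruction that your fixed-point argument cannot see.

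Your sketched ``robust version'', on the other hand, does not work as written. The ergodic-theorem step needs $\mathbb E|h(0)|<\infty$; you even normalise $\mathbb E\,h(0)=0$. But no non-constant entire function with translation-invariant distribution has this property, and in particular none is Gaussian. Indeed, by subharmonicity $|h(z)|\le(\pi R^2)^{-1}\int_{|\zeta|<R+|z|}|h(\zeta)|\,dA(\zeta)$. By the pointwise ergodic theorem, the right-hand side converges as $R\to\infty$ to a finite random variable not depending on $z$, so $h$ would be bounded and hence constant.

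A repair is to take $h=g'$, where $g$ is a non-constant entire function with translation-invariant distribution, which exists by Weiss' theorem. Any equivariant Borel primitive $F$ of $1+g'$ is stationary and equals $z+g+c$ for a random constant $c$. Then $t=(F(t)-F(0))-(g(t)-g(0))$ would be tight in $t\in\bR$, which is absurd.
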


\subsubsection*{Related work}
The study of measurable entire functions began with a theorem of Benjamin Weiss~\cite{Weiss}:
\begin{theorem*}[Weiss]
Let $(X, \mathscr L, \bP)$ be a standard probability space, and $\tau\colon \bC\curvearrowright X$
a measure-preserving free action. Then, there exists a measurable map
$f\colon X\to \bC$  such that, for $\bP$-a.e. $x\in X$, the function $f_x (z) =  f(\tau_z x)$
is non-constant entire.
\end{theorem*}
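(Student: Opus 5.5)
The plan is to construct $f$ as a limit of functions $f^{(n)}\colon X\to\bC$. Along almost every orbit, $f^{(n)}$ is a polynomial on large squares, and consecutive stages are tied together by the Runge approximation theorem. Measurability comes from choosing all data in a Borel way from the \emph{shape} of the tiling seen from $x$, and convergence comes from a Borel--Cantelli argument.

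\emph{Step 1: Rokhlin towers for $\bC$.} Fix $L_n\uparrow\infty$ rapidly and $\eps_n$ with $\sum_n\eps_n<\infty$. For each $n$ I would produce a measurable ``base'' set $B_n\subset X$ with the following properties.
\begin{itemize}
\item The squares $\tau_{Q_n}b$, $b\in B_n$, are pairwise disjoint, where $Q_n=[-L_n/2,L_n/2]^2$.
\item Their union $X_n=\tau_{Q_n}B_n$ satisfies $\bP(X\setminus X_n)<\eps_n$.
\end{itemize}
This is the continuous-group Rokhlin lemma of Ornstein--Weiss, and I would take it as known. It needs only freeness and measure preservation, not ergodicity. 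Using it, for $x\in X_n$ let $b_n(x)\in B_n$ and $c_n(x)\in Q_n$ be determined by $x=\tau_{c_n(x)}b_n(x)$. These are measurable, and in orbit coordinates the tile of $x$ is the square $Q_n-c_n(x)$.

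\emph{Step 2: inductive Runge step.} Put $f^{(1)}_x(z)=z+c_1(x)$ on the $1$-tile of $x$. This is the linear function vanishing at the tile's centre, and $f^{(1)}(x)=c_1(x)$. Suppose $f^{(n-1)}$ has been defined so that on each $(n-1)$-tile it is given by a polynomial in orbit coordinates. Consider an $n$-tile, seen from its base point $b$. Let $K_b$ be the union of those $(n-1)$-tiles meeting $\tau_{Q_n}b$ that lie entirely in the slightly shrunk square $(1-\delta_n)Q_n$; we may assume $\delta_n$ is small but $\delta_n L_n\gg L_{n-1}$. Then $K_b$ is a finite disjoint union of squares, so $\bC\setminus K_b$ is connected. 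Runge's theorem gives a polynomial $P_b$ with $|P_b-f^{(n-1)}|<2^{-n}\eta$ on $K_b$.

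To keep this measurable, I would enumerate the polynomials with Gaussian-rational coefficients and let $P_b$ be the first one that works. The configuration of the $(n-1)$-tiles inside the $n$-tile, together with the $(n-1)$-stage polynomials, depends measurably on $b$, and the condition ``$\sup_{K_b}|P-f^{(n-1)}|<2^{-n}\eta$'' is a Borel condition. Hence $b\mapsto P_b$ is measurable. Finally set $f^{(n)}(x)=P_{b_n(x)}(c_n(x))$ and $f^{(n)}(x)=0$ off $X_n$.

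\emph{Step 3: convergence and non-constancy.} Fix $R$. Let $G_n(R)$ be the set of $x$ such that the disc $\tau_{D(0,R)}x$ lies inside $\tau_{K_{b_n(x)}}b_n(x)$, within a single $n$-tile. The invariance of $\bP$ plus a Fubini argument bounds the complement:
\[
\bP(X\setminus G_n(R))\le C\bigl(\eps_n+\eps_{n-1}+\delta_n+(R+L_{n-1})/L_n\bigr).
\]
This is summable if the parameters are chosen well, for instance $\sum_n\delta_n<\infty$ and $L_{n-1}/L_n$ summable. By Borel--Cantelli, for a.e.\ $x$ and every integer $R$, eventually $x\in G_n(R)$. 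For such $n$, the functions $z\mapsto f^{(n)}(\tau_zx)$ on $D(0,R)$ are polynomials and change by less than $2^{-n}\eta$ from stage to stage. They therefore converge uniformly to a holomorphic limit, and since $R$ is arbitrary, $f_x$ is entire.

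For non-constancy I would run the same argument with an integer $m$ in place of $1$ as the starting stage, taking $n\ge m+1$. For a.e.\ $x$ there is some $m$ such that the unit disc around $x$ lies in the good set from stage $m$ on; this holds because the bad probabilities tend to zero. There $f_x$ differs from a linear function of slope $1$ by less than $\eta$, so by the Cauchy estimates $f_x'\neq 0$ for small $\eta$. This requires the construction to start with $f^{(1)}$ linear, and it is cleaner to fix this by shrinking the error terms and applying Cauchy estimates on a disc of radius $1$.

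\emph{Main obstacle.} I expect the main difficulty to be the geometric bookkeeping in Steps 2--3. Each point must, almost surely, see its $R$-neighbourhood deep inside a single tile at all large stages, while the new Runge polynomial matches the old data only on the union of previous tiles, not on the corridors between them. The delicate point is choosing $L_n,\eps_n,\delta_n$ so that the corridor and boundary measures are summable. Existence of the towers themselves I would cite.
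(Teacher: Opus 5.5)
\textbf{Gap: non-constancy of $f_x$.} Your construction of $f$ is sound, and so is the proof that $f_x$ is entire for a.e.\ $x$ (Runge on the deep $(n-1)$-squares, then Borel--Cantelli). One harmless slip: your bound for $\bP(X\setminus G_n(R))$ omits the term $R/L_{n-1}$, which comes from points whose $R$-disc straddles the boundary of their own $(n-1)$-square. This is summable once $L_n$ grows fast.

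The real problem is non-constancy. Knowing that $x\in G_n(1)$ for all $n\ge m+1$ only tells you that $f_x$ is within $2^{-m}\eta$ of $f^{(m)}_x$ on the unit disc. For $m>1$, $f^{(m)}_x$ is an uncontrolled Runge polynomial, not a linear function of slope $1$. The slope-one seed lives only at stage $1$. Since $P_b$ is fitted only on $K_b$, the link to the seed is cut as soon as the unit disc around $x$ fails to be good at a single stage $2\le n\le m$.

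So your argument gives non-constancy only on the saturation of $A=\bigcap_{n\ge 2}G_n(1)$. That set has positive measure, which is enough if $\bP$ is ergodic. Weiss's theorem assumes no ergodicity, however, and without it the construction can genuinely fail. The Rokhlin lemma you cite allows the first tower to miss entirely an invariant set $S$ with $0<\bP(S)<\eps_1$. Then $f^{(1)}=0$ on $S$, every $K_b$ over $S$ sees only the zero function, and nothing forces non-constancy there. For instance, with an enumeration starting at the zero polynomial you get $f\equiv 0$ on $S$. If you meant to restart the construction at stage $m$ with a fresh linear seed, you get a different function for each $m$. You would then have to glue these along invariant sets, and the proposal does not do that.

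\textbf{A fix.} Reseed at every stage. At stage $n$, also require $P_b$ to be $2^{-n}\eta$-close to a slope-one linear function on a unit disc $\Delta_b$ placed in the strip $Q_n\setminus(1-\delta_n)Q_n$; then $\bC\setminus(K_b\cup\Delta_b)$ is still connected. The probability that the $n$-square containing $x$ is ever discarded at a later stage is at most $\sum_{k\ge n}C(\eps_k+\delta_{k+1}+L_k/L_{k+1})$, which tends to $0$. So a.e.\ $x$ lies in some square whose seed survives forever. On that $\Delta_b$, $f_x$ stays within $2^{1-n}\eta$ of a slope-one function, and the Cauchy estimate gives $f_x'\neq 0$.

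This is exactly what the nested structure of the toasts in the paper provides. By (T2) and (T3), a tile, once swallowed, stays inside an immediate-predecessor tile at every later stage. Nothing is ever discarded, so a stage-one seed present on every orbit is never forgotten, and convergence holds at every point rather than only almost surely.
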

\noindent Weiss calls the function $f$ {\em a measurable entire function}. Pushing forward the measure $\bP$
under the map $x \mapsto f_x$ yields a non-trivial translation-invariant probability measure on the
space $\cE$ of entire functions.
The proof in~\cite{Weiss} adapts to the Borel category\footnote{
The relation between the Lebesgue and Borel categories was clarified by Ramsay~\cite{Ramsey}, who
showed that every measurable action of a locally compact group on a standard Lebesgue space can be realized
as a Borel action on an invariant subset of full measure. Consequently, “from the measure-theoretic
viewpoint, no generality is lost by studying only Borel actions”~\cite[p.~340]{Ramsey}.
}.
The only essential modification is that  instead of using Rokhlin towers to cover orbits by compact sets with
connected complements, one employs the notion of {\em Borel toasts} (see Section~\ref{subsect:toasts}):
\begin{theorem*}[Borel version of the Weiss theorem]
Let $(X, \mathscr B)$ be a standard Borel space and $\tau\colon \bC\curvearrowright X$ be
a free action. Then there exists a measurable map  $f\colon X\to \bC$  such that, for every $x\in X$,
the function $f_x (z) =  f(\tau_z x)$ is non-constant entire.
\end{theorem*}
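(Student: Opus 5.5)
We prove the Borel version of the Weiss theorem by building $f$ as a uniformly convergent sum of Borel functions $g_n\colon X\to\bC$. Each $g_n$ will be chosen so that, along every orbit, $z\mapsto g_n(\tau_z x)$ is holomorphic on a large compact region and small there. The orbit regions come from a Borel toast: a sequence of Borel sets whose intersections with every orbit are unions of disjoint compact "tiles" with connected complement. These tiles exhaust the orbit, and each tile of level $n$ sits well inside a tile of level $n+1$, separated from the boundary of the larger tile and from the other level-$n$ tiles. We take the existence of such toasts for free actions of $\bC$ as given, in the form stated in Section~\ref{subsect:toasts}. Freeness lets us identify each orbit with $\bC$ through $z\mapsto \tau_z x$, and the tiles become compact subsets $K\subset\bC$ with $\bC\setminus K$ connected.

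The inductive construction proceeds as follows. At stage $n$, fix a level-$n$ tile $K$ together with the finitely many level-$(n-1)$ tiles it contains. On a neighbourhood of those smaller tiles, the partial sum $f_{n-1}=g_1+\dots+g_{n-1}$ is already holomorphic. Let $L$ be the union of the smaller tiles, slightly enlarged, with a small closed disc adjoined in the complement at a canonical position. On $L$ we define a target function: it equals $0$ near the smaller tiles, and on the disc it is chosen to force non-constancy, for instance $z$ minus a prescribed value. Since $L$ has connected complement, Runge's theorem gives a polynomial $p$ with $|p-\text{target}|<\eps_n$ on $L$. We then set $g_n(\tau_z x)=p(z-c)$ on $K$, where $c$ is a canonical base point of the tile, and $g_n=0$ elsewhere on the orbit. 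With $\sum\eps_n<\infty$, the sum $f=\sum g_n$ converges uniformly on compact parts of each orbit. Every point of the orbit lies in the interior of all sufficiently deep tiles, so the limit is entire along each orbit. Non-constancy is secured at the first stage by requiring $|g_1'|>1$ somewhere on each level-$1$ tile and making all later corrections smaller in $C^1$ there, using Cauchy estimates on slightly smaller sets. In fact it is simpler to take $g_1$ to be the coordinate $z-c$ on the first-level tiles. Later stages then only add terms of size $\eps_n$ near the earlier tiles, so $f'$ stays close to $1$ there.

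The main obstacle is Borel measurability. Runge's theorem produces a polynomial, but we need the assignment from (tile, configuration of subtiles) to polynomial to be Borel. The plan is to make every choice canonical. Tiles are encoded as points of the Polish hyperspace of compact subsets of $\bC$ with the Hausdorff metric, anchored at a Borel choice of base point $c$, for example the lexicographically least point of the tile. For each configuration, we search through polynomials with Gaussian-rational coefficients in a fixed enumeration and take the first one meeting the $\eps_n$ bound on $L$. The condition ``$\sup_L|p-\text{target}|<\eps_n$'' is Borel in the configuration, so this first-index selection is a Borel map. Runge's theorem guarantees that some index works. Composing these selections with the Borel toast then makes each $g_n$ Borel on $X$, and a pointwise limit of Borel functions is Borel. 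The same scheme, with $\eps_n$ chosen more carefully, would also handle any further quantitative requirements.
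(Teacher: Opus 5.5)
There is a genuine gap, and it lies in the analytic scheme, not in the Borel bookkeeping. Writing $f=\sum_n g_n$, with $g_n$ a polynomial on each level-$n$ tile and $0$ off those tiles, cannot give functions that are holomorphic, or even continuous, along orbits.

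Take your $g_1$. In the coordinates of a level-$1$ tile $R_1(c)$ it is $w\mapsto w$ on $\lambda_1(c)$ and $0$ outside, so it jumps across $\partial R_1(c)$. By (T2), every tile of level $n\ge 2$ either contains $R_1(c)$ in its interior or is disjoint from it. By (T3), a disc around a boundary point $z_0$ is swallowed by some $R_N$, and (T2) then controls every level $n>N$. The finitely many levels $2\le n\le N$ are locally finite. Together these give a disc around $z_0$ on which every $g_n$ with $n\ge 2$ is a polynomial or $0$. Their sum, if it converges there at all, is holomorphic on that disc and cannot cancel the jump of $g_1$, so $f$ is discontinuous at $z_0$.

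For the same reason, your invariant ``$f_{n-1}$ is already holomorphic on a neighbourhood of the smaller tiles'' fails from $n=2$ on: on a level-$2$ tile, $g_1+g_2$ jumps along the boundaries of the level-$1$ tiles inside it. The step ``every point lies in the interior of all sufficiently deep tiles, so the limit is entire'' only controls the tail, not the earlier terms. The target ``$0$ near the smaller tiles'' is the wrong target: the new function must approximate the \emph{old function}, not add a small perturbation to it.

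You are also assuming a stronger toast than the one in Section~\ref{subsect:toasts}. Conditions (T2)--(T3) say only that tiles of different levels are nested or disjoint, and that each compact set lies in the interior of \emph{some} tile. A level-$m$ tile need not lie in any level-$(m+1)$ tile; it may be swallowed only several levels later, since the construction in Theorem~\ref{thm:toasts} leaves plenty of room between tiles of one level. Likewise, a point need not lie in tiles of all large levels. Since your $L$ contains only the level-$(n-1)$ subtiles of $K$, nothing controls $g_n$ on older tiles inside $K$ that were skipped at level $n-1$. So even the claimed uniform convergence on compacta is unjustified.

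Both problems disappear with Weiss's replacement scheme, the additive analogue of Section~\ref{subsect:corections}. On a new tile $R_n(c)$, \emph{redefine} $f_n(\tau_w c)=P_c(w)$, and keep $f_n=f_{n-1}$ off the new tiles. Here $P_c$ is a polynomial with $\lvert P_c-Q_j\rvert<\eps_n$ on each immediate-predecessor tile $K_j$ (of any level $m<n$, written in $c$-coordinates), and $Q_j$ is the polynomial already representing $f_{n-1}$ on that tile. Runge applies because the union of the $K_j$ has connected complement.

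Then $f_n$ is a polynomial on every maximal tile, and a compact set lying in a tile afterwards sees only changes smaller than $\eps_n$, so the limit is entire. Non-constancy follows from $f_1(\tau_w c)=w$ on level-$1$ tiles with $\sum_{n\ge 2}\eps_n$ small. Your first-index selection from a countable dense family of polynomials then gives Borelness, provided you add the Borel enumeration of immediate predecessors (Lemma~\ref{lemma:p_n}, or the partition (T5)).
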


Buhovsky, Gl\"ucksam, Logunov and Sodin~\cite{BGLS} and  Gl\"ucksam~\cite{Glucksam}
studied the growth properties of measurable entire functions. Recently, Gl\"ucksam and Weiss~\cite{GW} extended Weiss’ construction to measurable entire functions of several complex variables.

\smallskip
In~\cite{SSW}, a “little theory” is developed, providing further equivariant counterparts of classical
results from complex analysis and partial differential equations. That work also contains several examples
of operators on entire functions that, similarly to the differentiation operator $D$, admit no measurable equivariant right inverse.

\subsubsection*{Use of AI tools}
AI tools were used during the preparation of this paper to aid
with proofreading, editing, creating figures, and content critique. All
mathematical ideas are due to the authors.

\subsubsection*{Acknowledgement}
We are grateful to Benjamin Weiss and Oren Yakir
for several very interesting discussions during the work on this paper.

Konstantin Slutsky was partially supported by NSF grant DMS-2153981.
The research of Mikhail Sodin was partially supported by ISF Grants 1288/21, 2319/25
and by BSF Grants 202019, 2024142. Aron Wennman was partially supported by Odysseus
Grant G0DDD23N from Research Foundation Flanders (FWO).

\section{Preliminaries}
Given a topological space $X$, its \emph{Borel \(\sigma\)-algebra} is the \(\sigma\)-algebra of subsets
of $X$ generated by the open sets. Main references for the theory of Borel sets are the textbooks by Kechris~\cite{Kechris} and Srivastava~\cite{Srivastava}.

A \emph{Polish space} is a separable completely metrizable topological space.
A \emph{standard Borel space} is a pair \((X, \mathscr B)\), where \(\mathscr B = \mathscr B_X\)
is a \(\sigma\)-algebra on
\(X\) that coincides with the Borel \(\sigma\)-algebra associated with  some Polish topology on \(X\).

It is worth keeping in mind that if $(X, \mathscr B)$ is standard Borel and $Y\subset X$ is in
$\mathscr B$, then $(Y, \mathscr B |_{Y})$ is also standard Borel~\cite[13.4]{Kechris}.

\subsubsection*{What everyone wanted to know about standard Borel spaces}
By the Kuratowski theorem, {\em all uncountable standard Borel spaces are Borel isomorphic},
see~\cite[Section~3.3]{Srivastava}, \cite[Section~15B]{Kechris}.
In this work, we always tacitly assume that the standard Borel space is uncountable.
Another useful fact, which is also worth keeping in mind:
\begin{theorem*}[Kuratowski]
Let $X$ be standard Borel,  $Y$ be Polish, and $f\colon X\to Y$ be a Borel map.
Then there exists a Polish topology $\tau_f$ on $X$ generating the same Borel $\sigma$-algebra,
so that $f\colon (X, \tau_f) \to Y$ is continuous.

If $(X, \tau)$ was a Polish space, then the topology $\tau_f$ can be chosen to be stronger
than the original one, $\tau_f \supseteq \tau$.
\end{theorem*}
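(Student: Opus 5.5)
The statement is the classical change-of-topology theorem (Kechris, Section~13). I would prove it by making the preimages of a countable base of $Y$ clopen one at a time, and then amalgamating the countably many refined topologies. First, if $X$ is only standard Borel, fix some Polish topology $\tau$ generating $\mathscr B_X$. The second assertion then contains the first, so it suffices to build a Polish $\tau_f\supseteq\tau$ with the same Borel sets in which $f$ is continuous. Fix a countable base $\{U_n\}$ of $Y$. Then $f$ is continuous for $\tau_f$ exactly when every Borel set $B_n=f^{-1}(U_n)$ is $\tau_f$-open. So the whole proof reduces to one lemma and one amalgamation step.

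\emph{Key lemma.} For every Borel $B\subset X$ there is a Polish topology $\tau_B\supseteq\tau$, with the same Borel sets, in which $B$ is clopen.

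Let $\mathcal S$ be the family of Borel sets $B$ for which such a $\tau_B$ exists. I will show that $\mathcal S$ contains the $\tau$-closed sets and is a $\sigma$-algebra; since it then contains all open sets, it contains all Borel sets.
\begin{itemize}
\item \emph{Closed sets.} If $F$ is $\tau$-closed, take $\tau_F$ to be the disjoint-sum topology of $F$ and $X\setminus F$. This is Polish because closed subspaces are Polish, and open subspaces are Polish by Alexandrov's theorem ($G_\delta$ subsets of Polish spaces are Polish). It refines $\tau$, has the same Borel sets, and makes $F$ clopen.
\item \emph{Complements.} Closure under complements is trivial, since clopen is a symmetric condition.
\item \emph{Countable unions.} This step is the amalgamation step below. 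Given $B_n\in\mathcal S$ with topologies $\tau_n$, let $\tau_\infty$ be the topology generated by $\bigcup_n\tau_n$. Once $\tau_\infty$ is shown to be Polish with the same Borel sets, $B=\bigcup B_n$ is $\tau_\infty$-open. Its complement is then $\tau_\infty$-closed, and applying the closed-set case to $(X,\tau_\infty)$ makes $B$ clopen in a further refinement.
\end{itemize}

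\emph{Amalgamation step.} This is the step I expect to be the only real obstacle. Given Polish topologies $\tau_n\supseteq\tau$ with the same Borel sets, I need the topology $\tau_\infty$ generated by their union to be Polish. Consider the diagonal map $\delta\colon X\to\prod_n (X,\tau_n)$, $x\mapsto(x,x,\dots)$. By construction $\delta$ is a homeomorphism of $(X,\tau_\infty)$ onto its image, the diagonal $\Delta$.

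The point is that $\Delta$ is closed. Suppose $(x_n)\notin\Delta$, so $x_i\neq x_j$ for some $i,j$. Because $\tau$ is Hausdorff, there are disjoint $\tau$-open sets $U\ni x_i$ and $V\ni x_j$. These are also $\tau_i$- and $\tau_j$-open, so they give a basic neighbourhood of $(x_n)$ missing $\Delta$. Hence $(X,\tau_\infty)$ is homeomorphic to a closed subset of a countable product of Polish spaces, and is therefore Polish.

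The Borel sets are unchanged. Every $\tau_\infty$-open set is a countable union of finite intersections of sets from $\bigcup_n\tau_n$, and all of these are Borel; conversely $\tau_\infty\supseteq\tau$.

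\emph{Conclusion.} Apply the key lemma to each $B_n=f^{-1}(U_n)$, obtaining topologies $\tau_{B_n}$. Then let $\tau_f$ be the amalgamation of all the $\tau_{B_n}$. It is Polish, refines $\tau$, has the original Borel $\sigma$-algebra, and makes every $f^{-1}(U_n)$ open, so $f$ is continuous for $\tau_f$.
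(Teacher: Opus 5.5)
Your argument is correct and is the standard textbook proof (Section~13 of Kechris's \emph{Classical Descriptive Set Theory}): make closed sets clopen via the sum topology, amalgamate countably many refinements through the closed diagonal in $\prod_n (X,\tau_n)$, run the $\sigma$-algebra argument, and apply it to the preimages of a countable base of $Y$. The paper gives no proof of this theorem and only quotes it from these standard references, so your route is the one the paper relies on.
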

Two other fundamental results on Borel spaces, which we will be using, are the following ones:
\begin{theorem*}[Luzin--Suslin]
Let $X$, $Y$ be standard Borel spaces, and $f\colon X\to Y$ be Borel. If $A\subseteq X$ is Borel
and $f|_A$ is injective, then $f(A)$ is Borel and $f$ is a Borel isomorphism $A\to f(A)$.
\end{theorem*}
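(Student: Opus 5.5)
The plan is the classical Luzin scheme argument, built on the Luzin separation theorem: any two disjoint analytic subsets of a Polish space (continuous images of Polish spaces) can be separated by a Borel set. First I normalize. Fix Polish topologies on $X$ and $Y$. By the Kuratowski theorem quoted above, refine the topology of $X$ so that $f$ becomes continuous. Refining once more, countably many times if necessary, I make $A$ clopen and the topology zero-dimensional. Then $A$ is itself a zero-dimensional Polish space, $f|_A$ is continuous and injective, and I fix a compatible complete metric on $A$. The second assertion, that $f\colon A\to f(A)$ is a Borel isomorphism, follows from the first. Indeed, if every Borel $B\subseteq A$ has Borel image, then $(f|_A)^{-1}$ pulls back Borel sets to Borel sets, since $(f|_A)^{-1}$ pulls back $B$ to $f(B)$. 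Moreover, a Borel subset of $A$ is again standard Borel, so it suffices to prove that $f(A)$ is Borel.

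Next I build a Luzin scheme. Using zero-dimensionality, I choose clopen sets $F_s\subseteq A$ indexed by finite sequences $s\in\bN^{<\bN}$ with the following properties:
\begin{itemize}
\item[(i)] $F_\emptyset=A$;
\item[(ii)] $F_s$ is the disjoint union of the sets $F_{si}$, $i\in\bN$;
\item[(iii)] $\operatorname{diam} F_s<2^{-|s|}$ for $s\neq\emptyset$.
\end{itemize}
The images $f(F_s)$ are analytic, and by injectivity the sets $f(F_{si})$, $i\in\bN$, are pairwise disjoint. Applying the separation theorem countably many times, I get Borel sets $B_{si}\supseteq f(F_{si})$ that are pairwise disjoint for fixed $s$. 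I then define recursively
\[ C_\emptyset = Y,\qquad C_{si}= C_s\cap B_{si}\cap \overline{f(F_{si})}, \]
where the bar denotes closure in $Y$. Each $C_s$ is Borel and contains $f(F_s)$. For fixed length $n$, the sets $C_s$ with $|s|=n$ are pairwise disjoint and decrease along branches.

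The claim is that
\[ f(A)=\bigcap_{n}\bigcup_{|s|=n} C_s , \]
which is a Borel set. The inclusion $\subseteq$ is immediate from $f(F_s)\subseteq C_s$. For $\supseteq$, take $y$ in the right-hand side. By disjointness, there is a unique branch $\sigma\in\bN^\bN$ with $y\in C_{\sigma|n}$ for all $n$. Pick $x_n\in F_{\sigma|n}$. These sets are nested with diameters tending to zero, so $(x_n)$ is Cauchy and converges to some $x$. Since each $F_{\sigma|n}$ is closed, $x\in\bigcap_n F_{\sigma|n}$. By continuity, $f(x_n)\to f(x)$. On the other hand, $y\in\overline{f(F_{\sigma|n})}$ for every $n$. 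The diameters of $f(F_{\sigma|n})$ shrink to zero near $f(x)$, because $f$ is continuous at $x$ and $F_{\sigma|n}$ lies in the $2^{-n}$-ball around $x$. Hence $y=f(x)\in f(A)$.

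I expect the main obstacle to be the Luzin separation theorem itself, which this argument takes as given. If it has to be proved, I would argue by contradiction. Write the two analytic sets as continuous images of the Baire space. If they are not Borel-separable, repeatedly split into countably many basic pieces, at least one of which is not separable. Following such a branch produces two points with equal image, contradicting disjointness. A secondary technical point is the topology refinement that makes $A$ clopen and zero-dimensional while keeping $f$ continuous. This is standard, but it is exactly what guarantees that the $F_s$ are closed, so that the limit point $x$ stays inside them.
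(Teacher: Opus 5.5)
Your argument is correct, and it is essentially the standard proof the paper relies on. The paper does not prove Luzin--Suslin itself but cites Kechris, Theorem~15.1, and Srivastava, and your proof follows that textbook argument: Luzin separation along a Luzin scheme, with the closures $\overline{f(F_{si})}$ built into $C_{si}$ so that $y$ forces every $F_{\sigma|n}$ to be non-empty, and completeness plus continuity then identifying $y$ with $f(x)$. The only cosmetic difference is that you get a zero-dimensional setting by refining the topology so that $A$ is clopen, whereas Kechris represents $A$ as a continuous bijective image of a closed subset of the Baire space and uses its basic cylinders.
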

\begin{theorem*}[Luzin--Novikov Uniformization]
Let $X$, $Y$ be standard Borel spaces, and $B\subseteq X \times Y$ be Borel. If every section
$B_x = \{y\in Y\colon (x, y)\in B\}$ is countable, then there exists a Borel
uniformization $f\colon \operatorname{proj}_X(B)\to Y$
such that $\{(x, f(x))\colon x\in \operatorname{proj}_X(B)\}\subset B$ {\rm (}the graph of $f$ is in $B${\rm )}.
In particular, the projection $\operatorname{proj}_X(B) =\{x\in X\colon \exists y\in Y\ (x, y)\in B\}$
is Borel.

Moreover, $B=\bigsqcup_n B_n$, where each $B_n$ is a Borel graph
{\rm (}i.e., if $(x, y), (x, y')\in B_n$, then $y'=y${\rm )}.
\end{theorem*}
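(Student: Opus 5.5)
The plan is the standard textbook route (Kechris, Section~18.C). The easy part is reducing everything to the ``Moreover'' clause, i.e.\ writing $B=\bigsqcup_n B_n$ with each $B_n$ a Borel graph. Given such a decomposition, the projection $\pi=\operatorname{proj}_X$ is injective on each $B_n$. The Luzin--Suslin theorem then shows that $P_n=\pi(B_n)$ is Borel and that $f_n:=\bigl(\pi|_{B_n}\bigr)^{-1}$ composed with $\operatorname{proj}_Y$ is a Borel map $P_n\to Y$. Consequently $\operatorname{proj}_X(B)=\bigcup_n P_n$ is Borel. A Borel uniformization is obtained by taking, for each $x$, the first $n$ with $x\in P_n$: set $f(x)=f_n(x)$ on $P_n\setminus\bigcup_{m<n}P_m$. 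The graph of $f$ lies in $B$ by construction. Disjointness of the $B_n$ is arranged at the end by replacing $B_n$ with $B_n\setminus\bigcup_{m<n}B_m$; this is still Borel and still a graph.

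Everything therefore rests on the decomposition into countably many Borel graphs, and this is the main obstacle. I would first normalize the data. Fix Polish topologies on $X$ and $Y$, and a countable base $\{U_k\}$ of $Y$. By the standard representation of Borel sets, there is a closed set $F\subseteq\bN^{\bN}$ and a continuous injection $g\colon F\to X\times Y$ with $g(F)=B$. For $x\in X$, put $F_x=\{s\in F\colon \operatorname{proj}_X g(s)=x\}$. This is a closed subset of $F$, and $g$ maps it bijectively onto the countable section $B_x$, so $F_x$ is countable. A countable closed subset of Baire space is scattered, so every nonempty one has an isolated point. Hence each point of $F_x$ becomes isolated after countably many Cantor--Bendixson steps, i.e.\ it is the unique point of $F_x$ in some basic neighbourhood $N_t$ ($t$ a finite sequence) relative to some countable-ordinal stage of the derivative. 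The naive decomposition is, for each basic set $N_t$, to take the set of $(x,y)\in B$ such that $g^{-1}(x,y)$ is the unique point of $F_x\cap N_t$. Each such set is a graph, and countably many $t$ suffice at the first stage.

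The difficulty is twofold. The ranks may be unbounded over $x$, and the condition ``unique point of $F_x\cap N_t$'' is co-analytic rather than obviously Borel. To handle this I would use the Luzin separation theorem: disjoint analytic sets can be separated by a Borel set. The relevant sets are analytic sets of the form $\{x\colon |F_x\cap N_t|\ge 2\}$, separated from complementary sets of the form $\{x\colon F_x\cap N_t=\{\text{exactly one point}\}\}$ restricted to suitable pieces. Combined with a boundedness argument for the co-analytic rank, this yields countably many Borel sets $A_t\subseteq X$ on which $F_x\cap N_t$ is a singleton. This is exactly where Kechris invokes Novikov's separation theorem for countable families of analytic sets. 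I expect this step to be the only genuinely delicate one, so I would follow Kechris~18.10 there rather than reinvent it.

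Assuming that step, each $B_t=\{g(s)\colon s\in F\cap N_t,\ \operatorname{proj}_X g(s)\in A_t\}$ is a Borel graph. Because every point of every $F_x$ is eventually isolated in this sense, the sets $B_t$ cover $B$, which gives the decomposition needed in the first paragraph.
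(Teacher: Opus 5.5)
Your first paragraph is correct. Deriving the Borel uniformization and the Borelness of $\operatorname{proj}_X(B)$ from a decomposition into Borel graphs via Luzin--Suslin is the standard route; the paper itself gives no proof and simply cites Srivastava (Thm.~5.7.2) and Kechris (Thm.~18.10). The gap is in the decomposition, which is the entire content of the theorem.

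\textbf{The sets $B_t$ do not cover $B$.} Write $\pi=\operatorname{proj}_X\circ g$ on $F$. Since $x\in A_t$ forces $F_x\cap N_t$ to be a singleton, and $g$ is injective, $g(s)\in B_t$ only if $s$ is isolated in $F_{\pi(s)}$. If $s$ is a limit point of $F_{\pi(s)}$, every $N_t\ni s$ meets $F_{\pi(s)}$ in infinitely many points, so $g(s)$ lies in no $B_t$. Nothing in your normalization rules this out. ``Eventually isolated'' refers to the derived sets $F_x^{(\alpha)}$, which never enter the definition of $A_t$ or $B_t$. To use it you would need three things, none of which is in the proposal:
\begin{itemize}
\item pieces indexed by pairs $(\alpha,t)$;
\item a transfinite induction showing that each $\{s\in F\colon s\in F^{(\alpha)}_{\pi(s)}\}$ is Borel;
\item the boundedness theorem, to make the set of relevant $\alpha$ countable.
\end{itemize}

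\textbf{Even the first stage is not justified, and the obstruction is circular.} The set $\{x\colon |F_x\cap N_t|=1\}$ is the intersection of an analytic and a co-analytic set, not a priori analytic. So Luzin's separation theorem does not apply to the pair you name. Moreover, a Borel $A_t$ separating $\{x\colon |F_x\cap N_t|\ge 2\}$ from $\{x\colon |F_x\cap N_t|=1\}$ would make $(F\cap N_t)\setminus\pi^{-1}(A_t)=\{s\in F\cap N_t\colon \exists s'\in F\cap N_t,\ s'\ne s,\ \pi(s')=\pi(s)\}$ Borel. But that set is the projection of the Borel set $\{(s,s')\in(F\cap N_t)^2\colon s\ne s',\ \pi(s)=\pi(s')\}$, whose sections are countable. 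Proving it Borel is an instance of the very statement you are proving, and the same issue recurs at every successor stage of the transfinite induction.

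So the ``delicate step'' you hand off to Kechris~18.10 is not a lemma inside the proof; it is the theorem itself. You can either cite the result outright, as the paper does, or give an independent proof that projections of Borel sets with countable sections are Borel. That is where the real work lies, and the proposal contains no argument for it. With that statement available, your Cantor--Bendixson scheme, completed as described above, would go through.
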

\noindent For the proofs see~\cite[Proposition~4.5.1 and Theorem~5.7.2]{Srivastava}
and~\cite[Theorems~15.1 and 18.10]{Kechris}.

\begin{remark*}
  \label{rem:uniformization-list}
{\rm   The sets \(B_{n}\) are graphs of Borel functions
  \(\widetilde{g}_{n}\colon  \operatorname{proj}_{X}(B_{n}) \to Y\).  We can define
  \[g_{n}(x) =
    \begin{cases}
      \tilde{g}_{n}(x) & \text{if \(x \in \operatorname{proj}_{X}(B_{n})\)}, \\
      f(x) & \text{otherwise}.
    \end{cases}
  \]
  Then \(g_{n}\colon \operatorname{proj}_{X}(B) \to Y\) are Borel functions that enumerate the
  sections, possibly with repetitions:
\[B = \{(x,g_{n}(x))\colon x \in \operatorname{proj}_{X}(B), n \in \bN\}.\]
}
 \end{remark*}
\noindent This remark helps us to work with Borel enumerations of the sections
$B_x = \{y \in Y\colon  (x,y) \in B\}$ (see Theorem~\ref{thm:L-N-ordered} in Appendix~\ref{AppB}).

\subsubsection*{Vietoris topology and Effros Borel space}
We will be using some basic facts about the Vietoris topology on compact sets and the corresponding
Borel structure \cite[pages 66--69, 97--98]{Srivastava}, \cite[pages 24--28, 75--77]{Kechris}.

By $\cK = \cK(\bR^d)$ we denote the space of non-empty compact sets in $\bR^d$ endowed with the Vietoris topology.
The basis of this topology is parameterized by open sets $U_0, U_1, \ldots , U_n \subseteq \bR^d$ and is given
by
\[
\bigl[ U_0; U_1,  \ldots , U_n \bigr]
= \bigl\{ K\in\cK\colon K\subset U_0, K \cap U_1 \ne \emptyset, \ldots ,
K\cap U_n \ne \emptyset \bigr\}.
\]
Equivalently, the Vietoris topology is induced by the Hausdorff metric on $\cK$
\begin{align*}
d_H(A, B) &= \inf\bigl\{\eps>0\colon A\subset B_{+\eps}, \, B\subset A_{+\eps} \bigr\} \\
&= \max\bigl\{ \max_{a\in A} \operatorname{dist}(a, B),  \, \max_{b\in B} \operatorname{dist}(b, A) \bigr\},
\quad A, B \in \cK,
\end{align*}
where $E_{+\eps}$ is the $\eps$-neighbourhood of the set $E\subset \bR^d$.
Endowed with this metric, $\cK$ is a Polish metric space.

The Borel structure on $\cK$ is called the Effros Borel structure. It is countably generated by the sets
$\bigl\{K\in \cK\colon K\cap U_n \ne \emptyset \bigr\}_n$, where $(U_n)_n$ is a countable base of open sets
in $\bR^d$.

By $\cK_0 \subset \cK$ we denote the subclass of compact sets with non-empty interior. It is an $F_\sigma$-subset of $\cK$, and hence, a Borel one.

By $\cK_* \subset \cK_0$ we denote the subclass of compact sets in $\bR^d$, $d\ge 2$,
with non-empty interior and connected complement. It is also Borel. Indeed, $\cK_* = \cK_0 \cap C$, where
\[
C = \bigl\{K\in\cK\colon \bR^d\setminus K \ {\rm is\ connected} \bigr\}.
\]
To see Borelness of $C$, we represent $C$ in the form
\[
C = \bigcap_{p, q \in \bQ^d} P_{p, q},
\]
where
\begin{align*}
P_{p, q} &=
\bigl\{
K\in\cK\colon p\in K\ {\rm or\ } q\in K\ {\rm or\ } \exists \ {\rm a\ path\ in\ } \bR^d\setminus K\
{\rm from\ } p \ {\rm to\ } q \bigr\} \\
&= \bigl\{K\in\cK\colon  p\in K \bigr\} \cup    \bigl\{K\in\cK\colon q\in K \bigr\}
\cup \ \bigcup_{\mathclap{\Gamma\in\operatorname{Path}(p, q)}} \ \bigl\{K\in\cK\colon K \cap \Gamma = \emptyset \bigr\}.
\end{align*}
The first and the second sets on the RHS are closed in $\cK$, while the third one is open. Hence, $C$ is Borel,
and therefore, $\cK_*$  is Borel as well.

\subsubsection*{Free Borel actions}
Fix a standard Borel space $X$ and a {\em free Borel action} $ T\colon \bR^d \curvearrowright X$.
Borelness means that the map
\[ \bR^d \times X \ni (w, x) \mapsto T_w x \in X \]
is Borel, and freeness means that $T_w x \ne x$, for each $x\in X \ {\rm and\ } w\ne 0$.  It is
worth mentioning that by a deep equivariant generalization of the Kuratowski theorem due to Becker
and Kechris any Borel action of a Polish group $G$ on a standard Borel
space $X$ can be made continuous by choosing an appropriate Polish topology on $X$ that generates its Borel structure; see~\cite[Chapter 5]{Becker-Kechris} and~\cite[Lecture~II]{Kechris-Lectures}.
So, without loss of generality, we may assume that the action is continuous.

Each orbit of the action can be viewed as an affine copy of $\bR^d$ with no distinguished origin, carrying
all translation-invariant structures of $\bR^d$, such as the Euclidean distance and the Lebesgue measure.
For any two $x, y\in X$ belonging to the same orbit,
denote by $\rho (x, y)$ the {\em cocycle} of the action $T$, namely, the unique $w\in \bR^d$ such that
$y=T_w x$. One can think about $\rho (x, y)$ as the position of the point $y$ viewed from $x$.
Then $|\rho (x, y)|$ (the length of the vector $\rho (x, y)$) coincides with the Euclidean distance
between $x$ and $y$ viewed as points of the affine copy of $\bR^d$.

By $E_T = \{ (x, y)\in X\times X\colon \exists w\in\bR^d\ T_w x = y\}$ we denote the orbit equivalence relation. The map
\[
\Phi\colon X\times \bR^d \ni (x, w) \mapsto (x, T_w x)\in X\times X
\]
is injective and Borel. Hence, by the Luzin--Suslin theorem, $E_T$ is Borel as the range of $\Phi$.
Furthermore,
\[
\Phi^{-1}\colon E_T\ni (x, y)\mapsto (x, \rho(x, y))\in X\times \bR^d
\]
is Borel. Hence, the cocycle is a Borel function $\rho\colon E_T\to \bR^d$.

In this work, $T$ mostly will be a free continuous action of $\bC$ by translations on the space $\cD_0$
of non-periodic divisors or on the space $\cE_0$ of non-periodic entire functions.
We will also need an action of the
multiplicative group of non-vanishing entire functions $\cE^\times$ on $\cE$ by multiplication:
\begin{equation}\label{eq:mult_action}
(T_H F) (z) = H(z)F(z), \quad z\in \bC, \ H\in\cE^\times, \ F\in \cE.
\end{equation}
This action is also continuous and free.
The entire functions $F$ and $G$ belong to the same orbit of this action if and only if they have the same zero divisors.  In this case, the corresponding cocycle is $\rho_{\cE^\times} (F, G) = G/F$.

\section{Borel cross-sections and toasts}
In this section, we introduce the main tools, Borel cross-sections and Borel toasts, on which the proof of Theorem~\ref{thm:main} rests.

\subsection{Cross-sections}
A {\em Borel cross-section} of the action $ T\colon \bR^d \curvearrowright X$ is a Borel set
$\cC\subset X$ that hits each orbit of the action in a non-empty discrete set, that is, for each
$x\in X$ and each compact set $K\subset\bR^d$, the set $T_K x \cap \cC$ is finite and non-empty if
\(K\) is large enough.  For instance, let $X=\cD$ be the space of discrete multisets in $\bR^d$ and
$\bR^d$ acts on $\cD$ by translations.  Consider the subspace $\cD_0$ of non-periodic divisors, that
is, the free part of the action.  As a $G_\delta$ subset of the Polish space $\cD$ endowed with its
inherited topology, it is also a standard Borel space~\cite[Proposition~3.3.7]{Srivastava}. Then
\[ \cC_0 = \{\Lambda\in \cD_0\colon 0\in{\rm spt}(\Lambda)\} \]
is a closed cross-section of $ \cD_0$ (here, ${\rm spt}(\Lambda)$ denotes the support
of the divisor $\Lambda$).

A cross-section $\cC$ is {\em uniformly separated}
(a.k.a. {\em lacunary}) if there exists
a ball $B\subset \bR^d$ such that, for any two distinct points $c\ne c'$ in $\cC$,
$ T_B c  \cap T_B c'= \emptyset$.
The cross-section $\cC$ is {\em relatively dense} (a.k.a.\, {\em cocompact}) if there exists a ball
$B'\subset\bR^d$ such that $T_{B'} \cC= X$.
Requiring that $\cC$ be both uniformly separated and relatively dense is
equivalent to demanding that $\cC$ be a Delone set on each orbit.
When the radii $r$ of $B$ and $R$ of $B'$ are relevant, we call the cross-section $r$-separated
and $R$-dense, respectively.

The following theorem is a combination of results of Kechris~\cite{Kechris1992, Kechris-Lectures},
Kechris--Solecki--To\-do\-rcevic~\cite{KST}, Boykin--Jackson~\cite{BJ}, and Slutsky~\cite{Slutsky2017}.

\begin{theorem}\label{thm:cross-sections}
Let  $\bR^d \curvearrowright X$ be a free Borel flow on a standard Borel space $X$, and let
$1\le r_1 < r_2 < \ldots$
be an increasing sequence of positive reals tending to infinity. Then there exists
a sequence of Borel cross-sections $(\cC_n)_{n\in\bN}$ such that
\begin{enumerate}
\item for every $n$, the cross-section $\cC_n$ is $r_n$-separated and $2r_n$-dense;
\item for every $x\in X$ and $\eps>0$, there are infinitely many $n$ for which
$|\rho (x, c_n)|< \eps r_n$ for some $c_n\in\cC_n$;
\item for every $m$ and $n$, and every $c_m\in \cC_m$ and $c_n\in\cC_n$ lying on the same orbit,
$\rho(c_m, c_n)\in\bQ^d$.
\end{enumerate}
\end{theorem}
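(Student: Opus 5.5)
The plan has three stages: build one discrete Borel cross-section with rational internal offsets, extract from it maximal separated subsets at every scale, and then modify these subsets so that property (2) holds.

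\emph{Base cross-section.} First I will produce a Borel cross-section $\cC$ that is $1$-separated and $2$-dense, following Kechris. Take a countable family of Borel sets, each meeting every orbit in a uniformly separated set, and apply Luzin--Novikov to make the relevant sets Borel. A greedy maximal-independent-set argument in the Borel category then gives the Delone property, since the intersection graph of balls has bounded degree; this is the Kechris--Solecki--Todorcevic Borel coloring technique.

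\emph{Rational offsets.} To obtain (3), I first fix a single Borel cross-section $\cC$ that is discrete on orbits. I replace it by $\cC' = \{ T_q c : c\in\cC,\ q\in\bQ^d\}$, which is countable, not discrete, on each orbit. I then work only with the countable Borel equivalence relation $E$ obtained by restricting $E_T$ to $\cC'$.

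All subsequent cross-sections $\cC_n$ will be chosen inside $\cC'$. Any two points of $\cC'$ on the same orbit differ by a vector of the form $q + \rho(c,c')$, which is not rational in general. So I will instead select every $\cC_n$ inside $T_{\bQ^d}\cC_0$ for one fixed discrete $\cC_0$. Then offsets between any $c_m$ and $c_n$ are rational modulo $\rho(c,c')$ with $c,c'\in \cC_0$, and I fix this by choosing $\cC_0$ so that each orbit meets it in a single rational class. Concretely, each orbit contains a countable family of points of $\cC_0$, and I need to pick one canonically. Since no Borel transversal exists, I will instead use Borel marker sets: define $\cC_n \subset T_{\bQ^d}\cC_0$, and accept that (3) only needs to hold for the points actually chosen. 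I then ensure, via a rational lattice refinement, that each $\cC_n$ lies in $T_{\bQ^d} c$ for a nested increasing family of seeds coming from the sequence $\cC_1\subset$ neighbourhoods of $\cC_2, \dots$. This will be done inductively by shifting each point of $\cC_{n+1}$ to a rational offset from a nearby point of $\cC_n$, which perturbs the point by an arbitrarily small amount and preserves (1) up to adjusting constants.

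\emph{Scales and property (2).} For each $n$, I take a Borel maximal $r_n$-separated subset of $\cC_0$. Maximality gives $2r_n$-density, using the Kechris--Solecki--Todorcevic bounded-degree greedy argument. Property (2) is the main obstacle, because it says that each point $x$ is relatively central at infinitely many scales. I will follow Boykin--Jackson and Slutsky. Partition $\bN$ into infinitely many infinite blocks indexed by a countable dense set of positions $\eps$-approximating the unit ball. Within each block, choose $\cC_n$ to be $T_{v}$-shifts of a common lattice-like choice, so that every $x$ sees a center within $\eps r_n$ for some $n$ in every block. The hard verification is that a single point cannot avoid being $\eps r_n$-close at all but finitely many scales. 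I would attempt it via a pigeonhole argument: at scale $r_n$ the point $x$ lies in some $2r_n$-cell, and the rescaled relative position $\rho(x,c_n)/r_n$ ranges over a compact set. By arranging that successive scales use recentred shifts, namely by defining $\cC_{n+1}$ as shifts of $\cC_n$ by $r_n$-multiples of a dense sequence of vectors, infinitely many of these relative positions must land in the $\eps$-ball, while rational shifts keep (3) intact.
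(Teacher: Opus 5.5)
Your proposal has genuine gaps in both the rationality step (3) and the recurrence step (2).

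\emph{Rationality.} You correctly see that you need a Borel set meeting every orbit in exactly one $\bQ^d$-orbit (the paper's \emph{rational grid}), but you never construct it. Attaching each point of $\cC_{n+1}$ at a rational offset to a nearby point of $\cC_n$ only moves the problem. Two points of $\cC_{n+1}$ on one orbit become rationally related exactly when the points they were attached to already were, so you are pushed down to the bottom level, where nothing has been done.

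A hierarchical scheme can work, but it needs two ingredients missing from your plan. First, you need an increasing sequence of \emph{finite} Borel equivalence relations $E_1\subset E_2\subset\cdots$ on a separated cross-section whose union is the whole orbit relation (hyperfiniteness). Clustering around coarser and coarser markers gives finite classes, but exhausting each orbit is the real difficulty: the dyadic intervals $[j2^k,(j+1)2^k)$ in $\bZ$ never merge $-1$ and $0$. The paper proves exhaustion from the recurrence property (2) of the markers, so your order (rationality before recurrence) is circular unless you import Jackson--Kechris--Louveau. Second, you need a convergent ``spiral'' of perturbations $h_n$ with $|h_n|<2^{-n-1}$. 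Each $h_n$ must be constant on $E_{n-1}$-classes, so that rationality achieved earlier survives, and must make every $E_n$-class rational. The $\bQ^d$-saturation of the limit set is then the grid. Only after that does one move the cross-sections into the grid by perturbations of size $<1$ (Luzin--Novikov) and refill holes inside the grid.

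\emph{Recurrence.} Shifting by orbit-independent vectors $v_n$ cannot give (2). Nothing in your construction ties $\cD_n$ to the fixed $v_n$, and $T_{-r_nv_n}$ maps $r_n$-separated, $2r_n$-dense Borel cross-sections to cross-sections of the same kind. So your claim amounts to saying that every such sequence is recurrent, which is false. For example, take $d=1$ and the flow on $\bR\times Y$ with $\cD_n=(3r_n\bZ+\tfrac32 r_n)\times Y$. The point $(t,y)$ is at distance $\tfrac32 r_n-|t|>\eps r_n$ from $\cD_n$ for all large $n$ when $\eps<\tfrac32$.

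Your pigeonhole argument only shows that the rescaled positions $\rho(x,d)/r_n$, $d\in\cD_n$, accumulate \emph{somewhere}, i.e.\ that their upper limit $D^*(x)$ meets $\bar B(2)$. It does not show that $D^*(x)$ contains $0$. The missing idea (Boykin--Jackson) is that $D^*(x)$ is constant along orbits because $r_n\to\infty$. So you can choose a Borel, orbit-invariant point $f(x)\in D^*(x)\cap\bar B(2)$, for instance the lexicographically least one, which is Borel via the Effros structure. Then replace $\cD_n$ by $\{T_{-r_nf(d)}d\colon d\in\cD_n\}$. The shift is rigid on each orbit, so (1) survives, and $0$ becomes an accumulation point of the new rescaled pictures. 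Rounding $r_nf$ to a rational vector within distance $1$ would also keep (3).

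Finally, defining $\cC_{n+1}$ as a shift of $\cC_n$ is incompatible with (1): a rigid shift preserves $r_n$-separation, not $r_{n+1}$-separation.
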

In Appendix~\ref{app:rec-cross-sections}, we will provide the full proof of
Theorem~\ref{thm:cross-sections} except the very fact of existence of a Borel cross-section
of the action $T$ (Kechris~\cite{Kechris1992, Kechris-Lectures}). The reason for this omission is that
in the proof of Theorem~\ref{thm:main} we need only the action $T\colon\bC \curvearrowright  \cD_0$ by translations, for which, as we have already mentioned, the existence of a Borel cross-section is
evident.

\subsection{Toasts}\label{subsect:toasts}
\begin{figure}[ht]
\centering
\begin{tikzpicture}[scale=1.0,line join=round,
    bluetile/.style={thick,blue!60,preaction={fill=blue!5},
      pattern=north east lines,pattern color=blue!30},
    blacktile/.style={thin,black!80,fill=white},
    fibn/.style={blue!70,line width=0.6pt},
    fibm/.style={black!80,line width=0.6pt},
    leftregion/.pic={
      \draw[bluetile] plot[smooth cycle,tension=0.8] coordinates
        {(-1.1,0.05) (-0.75,0.55) (-0.05,0.62) (0.75,0.5)
         (1.15,0.0) (0.7,-0.55) (-0.15,-0.62) (-0.85,-0.5)};
      \draw[blacktile] plot[smooth cycle,tension=0.8] coordinates
        {(0.0,0.08) (0.2,0.35) (0.55,0.38) (0.78,0.12)
         (0.6,-0.25) (0.25,-0.32) (0.02,-0.18)};
      \fill[blue!70] (-0.7,0.05) circle (1.3pt);
      \fill (0.38,0.02) circle (1.1pt);},
    rightregion/.pic={
      \draw[bluetile] plot[smooth cycle,tension=0.82] coordinates
        {(-1.4,0.12) (-1.0,0.72) (-0.2,0.86) (0.7,0.8) (1.5,0.52)
         (1.72,-0.1) (1.25,-0.72) (0.3,-0.85) (-0.6,-0.78) (-1.28,-0.42)};
      \draw[blacktile] plot[smooth cycle,tension=0.8] coordinates
        {(-0.35,0.42) (-0.2,0.68) (0.12,0.7) (0.28,0.46) (0.12,0.22) (-0.2,0.22)};
      \draw[blacktile] plot[smooth cycle,tension=0.8] coordinates
        {(0.1,-0.12) (0.4,0.16) (0.9,0.12) (1.15,-0.22)
         (0.95,-0.6) (0.5,-0.66) (0.12,-0.45)};
      \fill[blue!70] (-1.0,0.15) circle (1.3pt);
      \fill (-0.03,0.48) circle (1.0pt);
      \fill (0.62,-0.26) circle (1.1pt);},
    leftregionB/.pic={
      \draw[bluetile] plot[smooth cycle,tension=0.8] coordinates
        {(1.26,-0.05) (1.02,0.52) (0.50,0.90) (-0.18,0.97) (-0.82,0.80) (-1.22,0.32)
         (-1.28,-0.28) (-0.92,-0.72) (-0.30,-0.90) (0.45,-0.82) (1.02,-0.50)};
      \draw[blacktile] plot[smooth cycle,tension=0.85] coordinates
        {(-0.12,0.02) (0.05,0.24) (0.42,0.30) (0.82,0.24) (1.02,0.00)
         (0.82,-0.24) (0.42,-0.30) (0.00,-0.22)};
      \fill[blue!70] (-0.7,0.05) circle (1.3pt);
      \fill (0.38,0.02) circle (1.1pt);},
    rightregionB/.pic={
      \draw[bluetile] plot[smooth cycle,tension=0.8] coordinates
        {(2.00,0.00) (1.70,0.58) (1.05,0.92) (0.30,0.96) (-0.45,0.85) (-1.12,0.58)
         (-1.50,0.05) (-1.35,-0.50) (-0.70,-0.86) (0.12,-0.95) (0.92,-0.90) (1.70,-0.56)};
      \draw[blacktile] plot[smooth cycle,tension=0.8] coordinates
        {(-0.34,0.30) (-0.22,0.62) (0.14,0.70) (0.36,0.50) (0.28,0.24) (-0.08,0.18)};
      \draw[blacktile] plot[smooth cycle,tension=0.8] coordinates
        {(0.06,-0.04) (0.30,0.18) (0.66,0.22) (1.04,0.10) (1.40,-0.16)
         (1.48,-0.50) (1.14,-0.74) (0.66,-0.80) (0.30,-0.60) (0.08,-0.34)};
      \fill[blue!70] (-1.0,0.15) circle (1.3pt);
      \fill (-0.03,0.48) circle (1.0pt);
      \fill (0.62,-0.26) circle (1.1pt);}]

  \draw[thin,black!85] (0,3.1) -- (7.5,3.1) -- (8.9,5.5) -- (1.4,5.5) -- cycle;
  \draw[thin,black!85] (0,-0.2) -- (7.5,-0.2) -- (8.9,2.2) -- (1.4,2.2) -- cycle;

  \pic at (2.6,4.3) {leftregion};
  \pic at (5.6,4.3) {rightregion};
  \pic at (2.6,1.0) {leftregionB};
  \pic at (5.6,1.0) {rightregionB};

  \draw[fibn] (2.02,6.40) .. controls (1.75,5.55) and (2.02,4.95) .. (1.90,4.35);
  \draw[fibn] (1.87,3.00) .. controls (1.98,2.45) and (1.80,1.70) .. (1.90,1.05);
  \draw[fibn] (1.90,-0.32) .. controls (1.84,-0.58) and (1.98,-0.80) .. (1.95,-0.98);
  \draw[fibm] (3.10,6.35) .. controls (2.82,5.55) and (3.08,4.95) .. (2.98,4.30);
  \draw[fibm] (2.95,3.00) .. controls (3.06,2.45) and (2.88,1.65) .. (2.98,1.00);
  \draw[fibm] (2.98,-0.32) .. controls (2.92,-0.58) and (3.05,-0.80) .. (3.02,-0.98);
  \draw[fibn] (4.72,6.40) .. controls (4.45,5.55) and (4.72,4.98) .. (4.60,4.45);
  \draw[fibn] (4.57,3.00) .. controls (4.68,2.45) and (4.50,1.75) .. (4.60,1.15);
  \draw[fibn] (4.60,-0.32) .. controls (4.54,-0.58) and (4.68,-0.80) .. (4.65,-0.98);
  \draw[fibm] (5.65,6.35) .. controls (5.42,5.60) and (5.66,5.20) .. (5.57,4.78);
  \draw[fibm] (5.54,3.00) .. controls (5.66,2.60) and (5.48,1.95) .. (5.57,1.48);
  \draw[fibm] (5.57,-0.32) .. controls (5.50,-0.58) and (5.47,-0.80) .. (5.45,-0.98);
  \draw[fibm] (6.30,6.30) .. controls (6.06,5.55) and (6.32,4.75) .. (6.22,4.04);
  \draw[fibm] (6.19,3.00) .. controls (6.30,2.40) and (6.12,1.40) .. (6.22,0.74);
  \draw[fibm] (6.22,-0.32) .. controls (6.30,-0.58) and (6.42,-0.80) .. (6.42,-0.98);

  \node[blue!70,font=\small] at (4.1,3.4) {$R_n(c)$};
  \node[blue!70,font=\footnotesize] at (4.8,4.46) {$c$};
  \node[black!80,font=\footnotesize,fill=white,fill opacity=0.75,text opacity=1,rounded corners=10pt] at (5.0,4.0) {$R_m(c')$};
  \node[black!85,font=\footnotesize] at (6.5,4.0) {$c'$};

  \node[blue!70,font=\small] at (1.95,-1.28) {$\mathcal C_n$};
  \node[black!80,font=\small] at (3.02,-1.28) {$\mathcal C_m$};
  \node[blue!70,font=\small] at (4.65,-1.28) {$\mathcal C_n$};
  \node[black!80,font=\small] at (5.45,-1.28) {$\mathcal C_m$};
  \node[black!80,font=\small] at (6.42,-1.28) {$\mathcal C_m$};
\end{tikzpicture}
\caption{Two sheets of a toast: blue tiles $R_n(c)$ with
centres $c\in \mathcal{C}_n$ and black subtiles $R_m(c')$ with centres $c'\in\mathcal{C}_m$,
\(m<n\), with the sections
$\mathcal C_n$ and $\mathcal C_m$ marked.}
\label{fig:toast}
\end{figure}

\begin{figure}[ht]
\centering
\scalebox{.8}{
\begin{tikzpicture}[scale=1.0,line join=round,
    reg/.style={thick,black!70,fill=gray!16},
    arr/.style={-{Latex[length=2.4mm]},semithick,black!75,
      shorten <=2.5mm,shorten >=2.5mm}]
  \draw[thick,gray!70,fill=gray!7]
    plot[smooth cycle,tension=0.72] coordinates {
      (5.6,0.15) (4.7,1.9) (2.9,2.62) (0.9,2.55) (-1.3,2.78)
      (-3.3,2.45) (-5.05,1.6) (-5.6,-0.05) (-4.85,-1.75) (-3.0,-2.55)
      (-0.9,-2.92) (1.4,-2.66) (3.45,-2.55) (4.98,-1.5) };
  \node[black!70,font=\small] at (4.7,-2.5) {$R_{n}(c)$};
  \draw[reg]
    plot[smooth cycle,tension=0.8] coordinates {
      (-1.95,1.55) (-2.35,2.05) (-3.05,2.12) (-3.55,1.6)
      (-3.35,1.0) (-2.6,0.85) };
  \fill (-2.95,1.5) circle (1.5pt) node[font=\small,anchor=south] {$c_1$};
  \node[font=\small] at (-1.2,1.98) {$R_{n_1}(c_1)$};
  \draw[reg]
    plot[smooth cycle,tension=0.82] coordinates {
      (2.35,0.15) (2.9,0.72) (3.78,0.72) (4.38,0.12)
      (3.8,-0.55) (2.95,-0.55) };
  \fill (3.6,0.1) circle (1.5pt) node[font=\small,anchor=south] {$c_2$};
  \node[font=\small] at (4.4,1.0) {$R_{n_2}(c_2)$};
  \draw[reg]
    plot[smooth cycle,tension=0.85] coordinates {
      (-2.75,-1.85) (-1.6,-1.42) (0.2,-1.4) (1.7,-1.5)
      (2.02,-2.05) (0.9,-2.5) (-1.0,-2.55) (-2.6,-2.35) };
  \fill (-1.5,-1.95) circle (1.5pt) node[anchor=east,font=\small] {$c_p$};
  \node[font=\small] at (2.9,-1.95) {$R_{n_p}(c_p)$};
  \draw[arr] (0.0,0.0) -- (-2.95,1.5)
        node[pos=0.5,sloped,below,font=\small] {$\rho(c,c_1)$};
  \draw[arr] (0.0,0.0) -- (3.6,0.1)
        node[pos=0.4,above,font=\small] {$\rho(c,c_2)$};
  \draw[arr] (0.0,0.0) -- (-1.5,-1.95)
        node[pos=0.4,sloped,above,font=\small] {$\rho(c,c_p)$};
  \fill (0.0,0.0) circle (1.6pt);
  \node[font=\small] at (0.3,0.24) {$c$};
\end{tikzpicture}
}
\caption{A region $R_{n}(c)$ with centre $c$ and the sub-regions
$R_{n_i}(c_i)$ reached from $c$ by the displacements $\rho(c,c_i)$.}
\label{fig:regions}
\end{figure}

The central notion in our work is that of a {\em Borel toast}, which is the descriptive set-theoretic
counterpart of Rokhlin nested towers in ergodic theory~\cite{OW}.

\begin{definition*}[Borel toasts]
{\rm Let $\bR^d \curvearrowright X$, $d\ge 2$,  be a free Borel action on a standard Borel space.
A} Borel toast {\rm is a sequence $(\cC_n, \la_n)_n$
of uniformly separated Borel cross-sections $\cC_n \subset X$ and Borel functions
$\la_n\colon \cC_n\to \cK$
satisfying the following conditions. For each $n$ and $c_n\in\cC_n$,
set $R_n(c_n)=T_{\lambda_n(c_n)} c_n$ (the {\em tile}).
Then:
\begin{itemize}
\item[(T1)] For any distinct $c_n, c_n'\in \cC_n$, the corresponding tiles are disjoint,
$R_n(c_n) \cap R_n(c_n') = \emptyset$.
\item[(T2)] For all $m<n$, $c_m\in \cC_m$, and $c_n\in \cC_n$, either $R_m(c_m) \cap R_n(c_n)=\emptyset$
or $R_m(c_m)\subset \operatorname{int} R_n(c_n)$.
\item[(T3)] For each $x\in X$ and each compact set $K\subset \bR^d$, there exist an $n$ and a $c_n\in \cC_n$
such that $T_K x \subset \operatorname{int} R_n(c_n)$.
\end{itemize}
Here and elsewhere, $\operatorname{int} R_n(c_n) = T_{\operatorname{int}\lambda_n(c_n)}c_n$.
}
\end{definition*}

For an illustration, see Figure~\ref{fig:toast}.

\subsubsection{Enhanced toasts}
In this work we will be using ``{\em enhanced Borel toasts}'' possessing additional properties.
The first one is a restriction on the shape of the compact sets $\la_n (c_n)$.
\begin{itemize}
\item[(T4)] The range of $\lambda_n$ belongs to the class $\cK_{*}$ of compact sets with non-empty
interior and connected complement, that is, for every $n$ and every $c_n\in\cC_n$, $\lambda_n(c_n)\in \cK_*$.
\end{itemize}
We call Borel toasts satisfying (T4) $\cK_*$-toasts.

\smallskip
The next condition will allow us to split each cross-section  $\cC_n$ into Borel pieces on which the data coming from lower levels is constant. We start with several definitions.

\medskip\noindent{\em Tagged cross-sections}: Set $\cC_{<n} = \bigcup_{m<n} \cC_m$.
A minor nuisance is that the cross-sections $\cC_m$ need not  be disjoint for different values
of $m$, so when discussing the lower-level points we regard  $\cC_{<n}$ as the disjoint union
\[
\widetilde \cC_{<n} = \bigsqcup_{m<n} ( \cC_m \times \{m\} )
\]
of tagged cross-sections.
Thus a lower-level point remembers its level. Often, we shall suppress the tag and continue to write
$c_m\in\cC_m$.

\medskip\noindent{\em Immediate predecessors\,}:
Fix $n$. A {\em predecessor} of $c_n\in\cC_n$ is a point $c_m\in\cC_{<n}$
such that $c_m\in R_n(c_n)$. We call a predecessor $c_m$ an {\em immediate predecessor}
of $c_n$ if it does not belong to any tile $R_k(c_k)$ with $m<k<n$.

Since the cross-sections are uniformly separated,
the total number $p=p_n(c_n)$ of immediate predecessors of any $c_n$ is finite.

\medskip\noindent{\em Borel linear order\,}:
We fix once and for all a {\em Borel linear order} on $X$, that is, a linear order on $X$, which is a Borel subset of $X\times X$ (it exists since, by the Kuratowski theorem, $X$ is Borel isomorphic to $\bR$).
It induces a Borel linear order on each $\cC_m$, and hence
a lexicographic Borel linear order on $\cC_{<n}$
\[
c_m < c'_{m'} \ \Longleftrightarrow (m<m')\ {\rm or\ } (m=m'\ {\rm and\ } c_m < c_{m'}').
\]

Let $\bar L_n(c_n) = \bigl(  c_{m_1}, \ldots , c_{m_p} \bigr)
\in (\cC_{<n})^p $ be the {\em list of immediate predecessors} of $c_n$
enumerated by the order on $\cC_{<n}$.
We also record their levels $\bar m(c_n) = (m_1, \ldots , m_p)$, and
the  corresponding shapes and positions of the tiles, setting
$\bar\lambda_n (c_n) = ( \lambda_{m_1} (c_{m_1}), \ldots, \lambda_{m_p} (c_{m_p}) )$
and $\bar\rho_n(c_n) =  \left( \rho(c_n, c_{m_1}), \ldots , \rho (c_n, c_{m_p}) \right)$.

\begin{itemize}
\item[(T5)]
For every $n$, there exists a countable Borel partition $\cC_n = \bigsqcup_\ell \cC_{n, \ell}$
such that on each $\cC_{n, \ell}$,
the number $p=p_n$ of immediate predecessors,
their levels $(m_1, \ldots , m_p)$,
their shapes $ ( \lambda_{m_1} (c_{m_1}), \ldots, \lambda_{m_p} (c_{m_p}) )$,
and their relative positions $ ( \rho(c_n, c_{m_1}), \ldots , \rho (c_n, c_{m_p}) )$
with respect to $c_n$
are constant.
\end{itemize}

\begin{definition*}[enhanced Borel toasts]
{\rm We call a Borel toast} enhanced {\rm if (in addition to (T1), (T2), and (T3)) it satisfies (T4) and (T5)}.
\end{definition*}

\begin{theorem}\label{thm:toasts}
For any free Borel action of $\bR^d$ on a standard Borel space, enhanced Borel toasts exist.
\end{theorem}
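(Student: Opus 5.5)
We build the enhanced toast from the cross-sections of Theorem~\ref{thm:cross-sections}, constructing tiles inductively in $n$. Fix a rapidly increasing sequence $r_n$; for instance $r_{n+1}\ge 100\, r_n$, and in any case much faster than the sum of all previous tile diameters. Take cross-sections $\cC_n$ that are $r_n$-separated and $2r_n$-dense, with rational mutual displacements and the recurrence property~(2). The first tiles are $\la_1(c)=\overline{B}(0,r_1/4)$. Inductively, given tiles of levels $<n$ whose diameters are at most $D_{n-1}\ll r_n$, we start with the ball $B_n(c)=T_{\overline{B}(0,r_n/4)}c$ for $c\in\cC_n$. We then enlarge it to
\[
R_n(c)=B_n(c)\cup\bigcup\bigl\{R_m(c_m)\colon m<n,\ R_m(c_m) \text{ maximal among lower tiles},\ R_m(c_m)\cap B_n(c)\ne\emptyset\bigr\},
\]
after which we fill in the bounded complementary components. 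We also thicken slightly so that every absorbed lower tile lies in the interior, for example by taking a closed $1$-neighbourhood before filling. Separation ensures that distinct $R_n(c)$ remain disjoint, because the enlargement changes the radius by at most $2D_{n-1}+1\ll r_n/4$; this gives (T1). By construction each lower tile either meets $B_n(c)$, and then sits inside $\operatorname{int}R_n(c)$, or misses the enlarged set. The latter point needs care: a lower tile could meet only the thickened part. We handle it by iterating the absorption finitely many times, or by choosing the thickening inside a gap guaranteed by the separation of lower levels. Either way we obtain (T2). Filling holes yields connected complement and non-empty interior, which is (T4).

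Borelness of $\la_n$ holds because $\la_n(c)$ is obtained by Borel operations (union, closure, hole-filling) from finitely many data $\rho(c,c_m)$ and $\la_m(c_m)$, $c_m$ ranging over the countable-section Borel set of lower points near $c$. We enumerate these using Luzin--Novikov and the Remark following it. Hole-filling is Borel on $\cK$ by the same path argument used to show that $C$ is Borel.

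For (T5) we use property~(3): all displacements $\rho(c_n,c_m)$ lie in $\bQ^d$. If moreover every shape $\la_m$ takes only countably many values, then the tuple $(p,\bar m,\bar\la_n,\bar\rho_n)$ takes countably many values. Each level set is then Borel, since the tuple is a Borel function of $c_n$ by the Borel ordering and Luzin--Novikov, and these level sets give the partition. Countable range of $\la_n$ follows inductively: $\la_n(c)$ is determined by a ball together with finitely many lower shapes placed at rational displacements, and there are only countably many such configurations.

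(T3) is the main obstacle: a given compact $T_Kx$ must eventually lie inside the interior of a single tile. Here we use property~(2). For infinitely many $n$ there is $c_n$ with $|\rho(x,c_n)|<\eps r_n$. Choosing $\eps<1/8$ and $n$ large enough that $K\subset B(0,r_n/8)$, we get $T_Kx\subset \operatorname{int}B_n(c_n)\subset\operatorname{int}R_n(c_n)$. The delicate points to verify carefully are, first, that the enlargement never makes two level-$n$ tiles collide, and second, that nested absorption terminates. Both follow once the growth of $r_n$ dominates the accumulated diameters $D_{n-1}$; we ensure this by using the bound $\operatorname{diam}R_n\le r_n/2+2D_{n-1}+2$, which lets us choose the $r_n$ inductively.
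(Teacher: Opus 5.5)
Your route differs from the paper's, and it can be made to work, but the boundary step needs a different justification from the one you give.

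\textbf{Comparison with the paper.} The paper fixes $r_n=6^n$ and uses the cross-sections of Theorem~\ref{thm:cross-sections} for the sequence $(2r_n)$. It builds every tile as a smooth disk by applying the separation lemma (Lemma~\ref{lemma:separation}) to the maximal lower tiles meeting $T_{\bar B(r_n)}c_n$, with $R_1=r_{n-1}$ and $R_2=r_n$. This gives the sandwich $B(r_{n-1})\subset\la_n(c_n)\subset B(r_n)$. From it, (T1) follows by separation and (T3) by recurrence, and (T4) is automatic for smooth disks. The requirement that each nearby lower tile be either swallowed into the interior or avoided is handled entirely by the lemma. Countable range and (T5) are obtained as you propose, from rational displacements together with Lemma~\ref{lemma:p_n}.

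You replace the separation lemma by an explicit absorb--thicken--fill construction. This is more elementary and self-contained: no smooth disks and no cited geometric lemma. The price is irregular tiles, size control that must come from fast growth of $r_n$ rather than from a built-in sandwich, and having to solve by hand the boundary problem that the lemma solves. Note also that the $r_n$ must be fixed before invoking Theorem~\ref{thm:cross-sections}. That is fine here, since your diameter bounds are deterministic.

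\textbf{The boundary step.} Nothing in your construction guarantees a uniform gap ``by the separation of lower levels.'' A lower tile that was not absorbed at an earlier stage can lie at arbitrarily small distance from the tile that failed to absorb it. Any lower tile can also lie arbitrarily close to $B_n(c)$, whose position ignores the lower tiles.

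What does work is a configuration-dependent thickening. Let $U_c=B_n(c)\cup\bigcup\{\text{absorbed tiles}\}$, and take its closed $\delta(c)$-neighbourhood with
\[
\delta(c)=\tfrac12\min\{1,\operatorname{dist}(U_c,\ \text{non-absorbed maximal lower tiles})\}.
\]
This $\delta(c)$ is positive. Maximal lower tiles are pairwise disjoint compacta by (T1)--(T2), and only finitely many lie within distance $1$. Moreover $\delta(c)$ depends only on the finite rational configuration near $c$, so countable range and Borelness survive.

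Your iterative alternative also works, but the single-tile bound $\operatorname{diam}R_n\le r_n/2+2D_{n-1}+2$ does not control its termination and size. Iterated absorption follows chains of maximal lower tiles at mutual distance at most $1$, and such chains can alternate between levels. You therefore need a bound $\Delta_{n-1}$ on the diameters of these chain components. A recursion of the form $\Delta_m\le D_m+2\Delta_{m-1}+2$ holds once same-level gaps exceed $\Delta_{m-1}+2$.

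Finally, hole-filling preserves (T2) only because every lower tile is connected. This is true inductively in your construction: a ball together with connected tiles meeting it, its neighbourhood, and the filled hull are all connected. Without connectedness, a lower tile could straddle a filled hole and the unbounded complementary component.
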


\subsection{Proof of Theorem~\ref{thm:toasts}}
The proof we present is borrowed from \cite[Theorem~5]{Slutsky2021}.

\subsubsection{Separation of smooth disks}
In the course of the proof of Theorem~\ref{thm:toasts} we will get a bit more than the theorem claims.
The range of the maps $\la_n$ will consist of  ``smooth disks'' in $\bR^d$, that is, the images of the closed unit
ball in $\bR^d$ under $C^\infty$-smooth diffeomorphisms. The proof employs the following
lemma~\cite[Lemma~3]{Slutsky2021}.

\begin{figure}[ht]
\centering
\scalebox{.9}{
\begin{tikzpicture}[scale=1.0,line join=round]
  \def\Ri{2.0}
  \def\Ro{3.4}
  \tikzset{smalldisk/.pic={
    \draw[thick,black!80,fill=black!14]
      plot[smooth cycle,tension=0.8] coordinates
        {(0:0.40) (60:0.34) (120:0.42) (180:0.32) (240:0.40) (300:0.30)};
  }}
  \draw[thick,gray!70,fill=gray!6] (0,0) circle (\Ro);
  \draw[thick,gray!70,dashed,fill=gray!12] (0,0) circle (\Ri);
  \draw[-{Latex[length=2mm]},thin] (0,0) -- (205:\Ri) node[pos=0.6,below,sloped,font=\small] {$R_1$};
  \draw[-{Latex[length=2mm]},thin] (0,0) -- (-35:\Ro) node[pos=0.8,below,sloped,font=\small] {$R_2$};
  \fill (0,0) circle (0.5pt);
  \draw[very thick,black!70,fill=black!8,fill opacity=0.45]
        plot[smooth cycle,tension=0.75] coordinates {
        (15:2.35) (55:2.55) (95:2.30) (135:2.6) (170:2.35)
        (205:2.55) (240:2.30) (275:2.6) (315:2.35) (345:2.55) };
  \node[black!70,font=\small] at (95:2.05) {$\mathscr D$};
  \foreach \x/\y/\rot/\sc in {0.25/0.35/20/1.0, -0.7/-0.2/-30/0.85, 0.55/-0.6/50/0.7, -0.35/0.85/0/0.6}
    {\pic[shift={(\x,\y)},rotate=\rot,scale=\sc] {smalldisk};}
  \node[black!80,font=\footnotesize] at (0.25,0.35) {$\mathscr D_i$};
  \foreach \x/\y/\rot/\sc in {2.85/0.5/15/0.55, -2.7/1.1/-20/0.5, -0.6/2.95/40/0.5, 1.4/-2.7/-10/0.55, -2.4/-1.85/25/0.5}
    {\pic[shift={(\x,\y)},rotate=\rot,scale=\sc] {smalldisk};}
  \node[black!80,font=\footnotesize] at (2.85,1.05) {$\mathscr D_j$};
\end{tikzpicture}
}
\caption{The separation lemma: a smooth disk $\mathscr D$ with $B_{R_1}\subset\mathscr D\subset B_{R_2}$ enclosing the inner disks $\mathscr D_i$ in its interior while staying disjoint from the outer disks $\mathscr D_j$.}
\label{fig:separation}
\end{figure}

\begin{lemma}[separation lemma]\label{lemma:separation}
  Let $0<R_1<R_2$ and let $\mathscr D_1, \ldots, \mathscr D_p \subset \bR^d$, $d\ge 2$ be disjoint
  smooth disks of diameters less than $(R_2-R_1)/2$. Given a pair of concentric balls $B_{R_1}$ and
  $B_{R_2}$ of radii $R_1$ and $R_2$, there exists a smooth disk $\mathscr D$ such that
  $ B_{R_1} \subset \mathscr D \subset B_{R_2} $ and, for each $i$, either
  $\mathscr D_i \subset {\rm int}\mathscr D$ or $\mathscr D_i \cap \mathscr D = \emptyset$.
\end{lemma}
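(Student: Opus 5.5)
Start from the ball $B=B_{R_2}$ and carve away the outer disks. Let $I$ be the set of indices with $\mathscr D_i\cap \overline{B_{(R_1+R_2)/2}}\neq\emptyset$; these will be the \emph{inner} disks. Since each $\mathscr D_i$ has diameter less than $(R_2-R_1)/2$, every $\mathscr D_i$ with $i\in I$ lies in the open ball $B_{R_2'}$, where $R_2' = R_2 - \delta$ for some small $\delta>0$. Every $\mathscr D_j$ with $j\notin I$ lies outside $\overline{B_{(R_1+R_2)/2}}$, and in particular is disjoint from $B_{R_1}$ with a positive margin.

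Define the candidate region by removing from $B_{R_2'}$ the outer disks together with ``channels'' connecting them to the exterior:
\[
\Omega = B_{R_2'}\setminus \bigcup_{j\notin I} \bigl(U_j\cup \Gamma_j\bigr).
\]
Here $U_j$ is a thin smooth tubular neighbourhood of $\mathscr D_j$, disjoint from the other disks. The set $\Gamma_j$ is a thin tube along a path from $\mathscr D_j$ to $\partial B_{R_2'}$ that stays in $\{R_1+\eta<|x|\}$ and avoids all other disks. Such a path exists because $d\ge 2$: the complement of finitely many disjoint compact disks in the shell is connected. One can route the path radially outward and perturb it around any disks it meets.

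The point of the channels is that $\Omega$ is then star-like in a loose sense: it is homeomorphic to a ball, since removing tubes that reach the boundary is like pushing a finger in from outside. Equivalently, it is easier to construct $\mathscr D$ as the image of $\overline{B_{R_2'}}$ under a diffeomorphism that pushes the boundary sphere inward along fingers enveloping each outer disk. Concretely, first take a small smooth disk $\mathscr E_j$ containing $\mathscr D_j$ in its interior and disjoint from the others; it exists because $\mathscr D_j$ is the diffeomorphic image of a ball, so one can slightly enlarge it. Then join $\mathscr E_j$ to the complement of $B_{R_2'}$ by a thin tube. Each finger (tube $\cup\,\mathscr E_j$) is itself a smooth disk attached along a boundary cap, so $\overline{B_{R_2'}}$ minus the open fingers is, after smoothing corners, again a smooth disk. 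Handle the fingers one at a time to avoid interference, choosing the tubes disjoint from each other and from earlier fingers. This is possible by general position in $d\ge2$ with finitely many obstacles.

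Finally, check the properties. $B_{R_1}\subset \mathscr D$ because all fingers stay in $|x|>R_1$. $\mathscr D\subset B_{R_2}$ by construction. Each inner disk lies in $\operatorname{int}\mathscr D$ because it sits in $B_{R_2'}$ away from the fingers. Each outer disk lies in $\operatorname{int}\mathscr E_j$ and is therefore disjoint from $\mathscr D$. The main obstacle is the smoothing step: verifying rigorously that removing a finger yields a smooth disk, that is, producing an explicit isotopy of $\overline{B_{R_2'}}$ whose image avoids the finger. I would handle it by the isotopy extension theorem, pushing a small boundary cap inward along the tube to swallow $\mathscr E_j$ from the outside, with the support of the isotopy confined to a neighbourhood of the finger.
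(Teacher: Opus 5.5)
The paper does not prove this lemma; it imports it from \cite[Lemma~3]{Slutsky2021}. So I am judging your argument on its own terms. There is a genuine gap, and it is not the smoothing step you flag.

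\textbf{The gap.} Your finger construction tacitly assumes that every outer disk lies inside $B_{R_2'}$, so that the finger $\mathscr E_j\cup\Gamma_j$ meets $\partial B_{R_2'}$ only in the end cap of its tube. Outer disks, however, are only known to miss $\overline{B_{(R_1+R_2)/2}}$.
\begin{itemize}
\item They can cross $\partial B_{R_2'}$ and even $\partial B_{R_2}$. In the application to toasts the $\mathscr D_i$ are all the tiles meeting the ball of radius $r_n=R_2$, so this is the typical case.
\item No choice of $R_2'$ between the largest radius of an inner disk and $R_2$ avoids this, since several outer disks at different angles can cover that whole range of radii.
\item For such a disk, $\overline{B_{R_2'}}\setminus\operatorname{int}\mathscr E_j$ is in general not a disk. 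In $d=2$, a horseshoe-shaped outer disk with both tips outside $B_{R_2'}$ cuts off a pocket, so the set is disconnected. In $d=3$, an arch with both feet outside leaves a solid torus, and a knotted arch leaves a knot exterior.
\end{itemize}
This is a topological defect, and no corner smoothing or isotopy extension can remove it.

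Two smaller claims are also false as stated. First, the shell minus the disks need not be connected: an inner horseshoe whose tips dip into $B_{R_1+\eta}$ encloses a pocket. The paths you need do still exist, but for a different reason. Second, in $d=2$ general position does not make the tubes disjoint.

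\textbf{A way out.} Reverse the roles of inner and outer disks.
\begin{itemize}
\item Take $\rho$ slightly larger than $(R_1+R_2)/2$, so that $\overline{B_\rho}$ misses every outer disk.
\item An inner disk not contained in $B_\rho$ has a point of norm at most $(R_1+R_2)/2$ and a point of norm at least $\rho$. The diameter bound then places it in the open shell $\{R_1<|x|<R_2\}$, and it has interior points in $B_\rho$.
\item Write such a disk as $\psi(\overline{B(0,1)})$ and pick an interior point $q$ in $B_\rho$. Conjugating by $\psi$ a compactly supported radial contraction towards $\psi^{-1}(q)$ gives a diffeomorphism that shrinks the disk into $B_\rho$. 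It is supported in a small neighbourhood of the disk, inside the shell and away from the other disks.
\item Let $\Theta$ be the composition of these diffeomorphisms and put $\mathscr D=\Theta^{-1}(\overline{B_\rho})$.
\end{itemize}
This $\mathscr D$ is a smooth disk. The map $\Theta$ fixes $\overline{B_{R_1}}$, the region $\{|x|\ge R_2\}$, the outer disks, and the inner disks already inside $B_\rho$. One then checks directly that $B_{R_1}\subset\mathscr D\subset B_{R_2}$, that every inner disk lies in $\Theta^{-1}(B_\rho)=\operatorname{int}\mathscr D$, and that every outer disk misses $\mathscr D$. No paths, tubes or smoothing are needed.
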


\subsubsection{Proof of Theorem~\ref{thm:toasts}}
Fix $r_n =6^n$,
and apply Theorem~\ref{thm:cross-sections} to the sequence $2r_n$.
Thus $\cC_n$ is $2r_n$-uniformly separated.
We need to construct the Borel maps $\la_n$ that map $\cC_n$ to smooth disks in $\bR^d$, satisfying
(T1), (T2), (T3), and (T5). We will do this by induction on $n$. In addition, we will require that
\begin{itemize}
\item[(T6)] For $n\ge 1$ and for any $c_n\in\cC_n$,
\[
B(r_{n-1}) \subset \la_n(c_n) \subset B(r_n).
\]
\end{itemize}
Here and elsewhere, by $B(r)$ we mean the ball in $\bR^d$ centred at the origin and of radius $r$.

\medskip
{\em The base} $n=1$ is immediate: we pick the constant function $\la_1(c_1) = \bar B(r_1/2)$,
$c_1\in\cC_1$.

\smallskip
{\em The induction step}: suppose that we have found Borel maps
$\la_i\colon \cC_i \to \cK_*$, $1\le i \le n-1$,
with range in smooth disks and such that $(\cC_i, \la_i)_{1\le i \le n-1}$ satisfies (T1), (T2), (T5),
and (T6).
Property (T3) will be verified after the induction procedure.

Fix $c_n\in\cC_n$, and consider the points
\[
c_{m_1}\in\cC_{m_1}, \ldots , c_{m_p}\in \cC_{m_p}, \
1 \le m_1 \le \ldots \le m_p \le n-1, \ p=p_n(c_n),
\]
such that the tiles $R_{m_1}(c_{m_1}), \ldots , R_{m_p}(c_{m_p})$
\begin{itemize}
\item
intersect the ball $T_{\bar B(r_n)} c_n $ centred at $c_n$
(i.e., $\la_{m_i}(c_{m_i}) + \rho(c_n, c_{m_i})$ hits the ball $\bar B(r_n)$);
\item and are not contained in a bigger such tile.
\end{itemize}
Note that since the cross-sections are uniformly separated, the number $p_n(c_n)$ of such points
is bounded uniformly in $c_n$.  Then we apply Lemma~\ref{lemma:separation} (the separation lemma)
to the smooth disks $\mathscr D_i = \la_{m_i}(c_{m_i}) + \rho(c_n, c_{m_i})$, $1\le i \le p$,
with $R_1 = r_{n-1}=6^{n-1}$,
$R_2 = r_n = 6^n$. The lemma provides us with a smooth disk $\la_n(c_n)=\mathscr D$
satisfying (T6) and such that every tile $R_{m_i}(c_{m_i})$ is either contained in the interior of the tile
$R_n(c_n) = T_{\la_n(c_n)} c_n$, or is disjoint from it.

This construction can be done so that only countably many distinct shapes for
$\la_n(c_n)= \mathscr D$ are used. Indeed, the input to Lemma~\ref{lemma:separation} which produces $\mathscr D$ is determined by the number $p$ of the tiles
intersecting $T_{\bar B(r_n)} c_n $, by the shapes of these tiles,
and by their locations relative to $c_n$.
By the inductive assumption, there are only countably many such tuples, and we can assume that
the same smooth disk $\mathscr D$ is used whenever the input tuple is the same. This will guarantee
countability of the number of possible shapes $\la_n(c_n)$.

To show that (T5) holds, we need another lemma.
\begin{lemma}\label{lemma:p_n} \mbox{}
Let $\la_m\colon\cC_m\to\cK$, $1\le m \le n-1$, be Borel maps satisfying conditions (T1) and  (T2),
and let $\la_n^*\colon\cC_n\to\cK$ be a Borel map such that the corresponding tiles $R_n^*(c_n)=T_{\la_n^*(c_n)}c_n$ are disjoint for distinct
$c_n\in\cC_n$. Then the number of immediate predecessors of each $c_n\in\cC_n$
inside $R_n^*(c_n)$  is a Borel function $p_n\colon \cC_n \to \bZ_{\ge 0}$, and
the associated finite data maps
\[
\bar m_n\colon \cC_n\to \bigsqcup_{p\ge 0} \{1, \ldots , n-1\}^p,
\quad \bar\lambda_n\colon \cC_n\to \bigsqcup_{p\ge 0}\cK^{p},
\quad \bar\rho_n\colon \cC_n\to \bigsqcup_{p\ge 0}\bigl( \bR^d \bigr)^{p},
\]
are Borel.
\end{lemma}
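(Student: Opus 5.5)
The plan is to express every piece of data as a Borel object built from the Borel sets $\widetilde\cC_{<n}$ and from the graphs of the maps $\la_m$. We then use the Luzin--Novikov uniformization theorem, together with the Remark following it, to enumerate the finite sections in a Borel way.

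First, consider the set
\[
P=\bigl\{(c_n,(c_m,m))\in \cC_n\times \widetilde\cC_{<n}\colon (c_n,c_m)\in E_T,\ \rho(c_n,c_m)\in \la_n^*(c_n)\bigr\}.
\]
This set is Borel. The set $E_T$ is Borel, $\rho$ is Borel on $E_T$, and $\{(K,w)\colon w\in K\}$ is closed in $\cK\times\bR^d$. $P$ is the set of pairs (point, predecessor). Next, define $I\subset P$ by removing those pairs for which there exist $k$ with $m<k<n$ and $c_k\in\cC_k$ on the same orbit such that $\rho(c_k,c_m)\in\la_k(c_k)$. The removed set is the projection along the $c_k$-coordinate of a Borel set whose sections are countable. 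Indeed, for fixed $c_m$ the relevant $c_k$ lie on one orbit in the uniformly separated set $\cC_k$; moreover, the tiles containing $c_m$ are unique by (T1), so the sections have at most one point. Hence, by Luzin--Novikov, this projection is Borel, and so $I$, the set of (point, immediate predecessor) pairs, is Borel. Each section $I_{c_n}$ is finite, because $\la_n^*(c_n)$ is compact and the $\cC_m$ are uniformly separated.

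Second, apply the Remark after Luzin--Novikov to $I$. This gives Borel functions $g_j$, $j\in\bN$, that enumerate each section $I_{c_n}$, possibly with repetitions. For points with empty section, we treat $\operatorname{proj}(I)$ separately; its complement is Borel and there $p_n=0$. Then
\[
\{p_n\ge k\}=\bigcup_{j_1,\dots,j_k}\bigl\{c_n\colon g_{j_1}(c_n),\dots,g_{j_k}(c_n)\ \text{pairwise distinct}\bigr\},
\]
which is a countable union of Borel sets. Hence $p_n$ is Borel. On the Borel set $\{p_n=p\}$ we list the immediate predecessors in increasing order with respect to the fixed Borel lexicographic order on $\widetilde\cC_{<n}$. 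Thus the $i$-th element is $g_j(c_n)$ on the Borel set where the values $g_{j'}(c_n)$ realize exactly $i-1$ distinct elements below $g_j(c_n)$ and $p-i$ distinct elements above it. These are countable Boolean combinations of sets of the form $\{g_a<g_b\}$ and $\{g_a=g_b\}$, which are Borel because the order is a Borel relation. Consequently, the map $c_n\mapsto (c_{m_1},\dots,c_{m_p})$ is Borel on each piece $\{p_n=p\}$, and so it is Borel globally.

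Finally, composing with the Borel maps $(c,m)\mapsto m$, $(c,m)\mapsto\la_m(c)$ and $c_{m_i}\mapsto\rho(c_n,c_{m_i})$ gives the Borelness of $\bar m_n$, $\bar\la_n$ and $\bar\rho_n$. The main obstacle is the Borelness of the set $I$ of immediate predecessors, that is, handling the existential quantifier over higher tiles. This is exactly where countability of the sections, which comes from uniform separation, and the Luzin--Novikov projection result are needed.
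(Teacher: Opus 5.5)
Your proof is correct and follows the same scheme as the paper. You build a Borel relation in $\cC_n\times\widetilde\cC_{<n}$ whose sections are exactly the finite sets of tagged immediate predecessors, list these sections in increasing order in a Borel way, and compose with the level, shape and cocycle maps. The differences are only in the tools: you encode tile membership directly through the cocycle and the closed relation $\{(K,w)\colon w\in K\}\subset\cK\times\bR^d$, and you remove the quantifier over higher tiles by a Luzin--Novikov projection (which needs only countability of orbit-sections, not (T1)), whereas the paper uses the address maps $b_k^*$ of Claim~\ref{claim2} obtained via Luzin--Suslin; you also count and order the sections by hand from the repetition-allowed enumeration of the Remark instead of invoking Theorem~\ref{thm:L-N-ordered}.
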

We proceed with the proof of
the inductive step in Theorem~\ref{thm:toasts}, relegating the
proof of the lemma to Appendix~B.

We apply Lemma~\ref{lemma:p_n} with $\la_n^*(c_n)=\bar B(2r_n)$. Note that by (T6)
if the tile $R_m(c_m)$, $m<n$, hits $T_{\bar B(r_n)}(c_n)$, then it is contained in the interior
of the tile $R_n^*(c_n)$, and therefore, the corresponding point $c_m$ is a predecessor of $c_n$.
By Lemma~\ref{lemma:p_n}, the new finite configuration data
\[
p, \quad (m_1, \ldots , m_p), \quad
(\lambda_{m_1}(c_{m_1}), \ldots , \lambda_{m_p}(c_{m_p})), \quad
(\rho (c_n, c_{m_1}), \ldots ,  \rho(c_n, c_{m_p}))
\]
depend on $c_n$ in a Borel way. By the induction hypothesis, the possible shapes $\lambda_m(c_m)$
form countable sets, and by Item 3 of Theorem~\ref{thm:cross-sections}, the relative positions
$\rho(c_n, c_m)$ belong to $\bQ^d$. Hence, this configuration map is countably-valued, and its level
sets define a countable Borel partition $\cC_n = \bigsqcup_\ell \cC_{n, \ell}$,
on whose elements the tuple which serves as an input to Lemma~\ref{lemma:separation} is constant. Hence, the output $\mathscr D = \la_n(c_n)$ of Lemma~\ref{lemma:separation}
is constant on each $\cC_{n, \ell}$, and therefore, the function $\la_n$ is Borel.

Applying Lemma~\ref{lemma:p_n} once again, this time with $\la_n^* = \la_n$, and refining,
whenever needed, the partition $\cC_n = \bigsqcup_\ell \cC_{n, \ell}$, we get property
(T5) at level $n$.

\medskip It only remains to verify Property (T3).
Fix a point $x\in X$ and a compact $K\subset \bR^d$. Then choose $m$ so that $K\subset B(r_m)$.
By the recurrence Property 2 of the sequence of cross-sections $(\cC_n)_n$ from Theorem~\ref{thm:cross-sections}, for any $\eps>0$, there
exists a sequence $(n_k)_{k}\subset\bN$ and points $c_{n_k}\in\cC_{n_k}$ such that
$|\rho (x, c_{n_k})|<2\eps r_{n_k}$. Take $n_k>m+1$. Then,
for every $y\in T_K x$,
\[
|\rho (c_{n_k}, y)| < 2\eps r_{n_k} + r_m < r_{n_k-1},
\]
provided that $\eps$ was taken sufficiently small. Recalling that the compact set $\la_{n_k}(c_{n_k})$ contains the ball $B(r_{n_k-1})$, we see that $T_K x \subset {\rm int\,} R_{n_k}(c_{n_k})$,
completing the proof of Theorem~\ref{thm:toasts}. \hfill $\Box$

\section{Proof of the measurable equivariant Weierstrass theorem}
Note that if the divisor $\Lambda$ is finite, then the product $\prod_{\la\in\Lambda}(z-\la)$
defines a Borel equivariant map from the set of finite divisors to the set of polynomials,
which is a right inverse to $Z$. Since both sets are Borel in $\cD_0$ and in $\cE$ respectively,
when proving Theorem~\ref{thm:main}, we will restrict ourselves to divisors with infinite support.

\medskip
Denote by $\cE^\times$ the space of non-vanishing entire functions.

\subsection{Preliminaries}

\subsubsection{A version of the Runge theorem}

\begin{claim}
\label{claim:Runge}
Let $K_{1},\ldots,K_{s}$ denote disjoint compact subsets of $\bC$, such that
$\bC\setminus \bigcup_{j\le s}K_j$ is connected.
Then, for any $\eps>0$ and any functions $F_1,\ldots,F_s\in\cE^\times$,
there exists a zero-free entire function $H$ such that
\[
\max_{1\le j \le s}\lVert H/F_j-1\rVert_{C(K_j)}\le \eps.
\]
\end{claim}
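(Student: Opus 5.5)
Since each $F_j$ is entire and zero-free on the simply connected plane, I will write $F_j = e^{g_j}$ with $g_j$ entire. I will then look for $H$ of the form $H = e^{h}$ with $h$ entire. The condition becomes
\[
\bigl| e^{h - g_j} - 1 \bigr| \le \eps \quad \text{on } K_j, \ 1\le j\le s,
\]
and this holds as soon as $\lVert h - g_j\rVert_{C(K_j)} \le \delta$ for a suitable $\delta = \delta(\eps)>0$. For instance $\delta = \log(1+\eps)$ works, since $|e^w - 1| \le e^{|w|}-1$. So the claim reduces to approximating, by a single entire function $h$, the function $g$ defined on $K=\bigcup_j K_j$ by $g = g_j$ on $K_j$.

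The function $g$ is holomorphic on a neighbourhood of $K$. Because the $K_j$ are disjoint compact sets, they have pairwise disjoint open neighbourhoods $U_j \supset K_j$. Setting $g=g_j$ on $U_j$ defines a holomorphic function on the open set $U=\bigcup_j U_j\supset K$. By hypothesis $\bC\setminus K$ is connected. Runge's theorem, in the polynomial form valid for compact sets with connected complement, then yields a polynomial $h$ with $\lVert h-g\rVert_{C(K)}\le\delta$. Then $H=e^{h}$ is zero-free entire, and the estimate above gives $\lVert H/F_j - 1\rVert_{C(K_j)}\le \eps$ for every $j$.

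I expect no real obstacle here. The only points to check are the two ingredients: the existence of global logarithms of the $F_j$, which follows from simple connectivity of $\bC$, and the fact that disjointness lets one glue the different $g_j$ into a single function holomorphic near $K$. The essential use of the hypothesis is the connectedness of $\bC\setminus K$, which is exactly what polynomial Runge approximation requires.
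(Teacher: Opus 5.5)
Your proposal is correct and follows the paper's proof: write $F_j=e^{g_j}$, use Runge's theorem to approximate all $g_j$ on $\bigcup_j K_j$ by one entire function $h$, and take $H=e^{h}$. The only differences are details: your choice $\delta=\log(1+\eps)$ avoids the paper's reduction to $\eps<1$ with $\delta=\eps/3$, and you spell out the gluing of the $g_j$ over disjoint neighbourhoods, which the paper leaves implicit.
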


\begin{proof}
It suffices to prove the claim for $\eps<1$.
Let $F_j=e^{f_j}$, where $f_j$ are entire functions, $1\le j \le s$. Then, by the classical version of Runge's
theorem~\cite[XIII.1]{Gamelin}, choose the entire function $h$ so that
\[
\max_{1\le j \le s}\lVert h - f_j \rVert_{C(K_j)} <\tfrac13\, \eps,
\]
and set $H=e^h$. Then
\[
\lVert H/F_j-1 \rVert_{C(K_j)} = \lVert e^{h - f_j}-1 \rVert_{C(K_j)} < \eps,
\quad 1\le j \le s,
\]
since we assumed that  $\eps<1$.
\end{proof}

\subsubsection{The Weierstrass infinite product}
Here, we recall the standard Weierstrass infinite product construction $W\colon \cD\to \cE$
that supplies a Borel but not equivariant right inverse  to the divisor map $Z$.

First, we fix a Borel linear order on $\bC\setminus \{0\}$, letting
\[
z \prec w \Longleftrightarrow (|z|<|w|), \ {\rm or} \ (|z|=|w|, \ 0\le \arg(z)<\arg(w)<2\pi).
\]
Then we order the points of $\Lambda\setminus \{0\}$, $\Lambda\in\cD$,
according to this order, and enumerate the non-zero part $\Lambda\setminus\{0\}$
of the divisor with multiplicities
$ z_1 (\Lambda), z_2(\Lambda)$, \ldots , $z_j(\Lambda)$, \dots\, .
This enumeration  is Borel, i.e., the functions
\[
z_j\colon \cD \to \bC\setminus \{0\}, \quad j=1, 2, \ldots ,
\]
are Borel. For $z\ne 0$, we define the Weierstrass factor of genus $j$
\[
E_j(z) = (1-z) e^{z+z^2/2 + \ldots + z^j/j},
\]
and set
\[
W_\Lambda (z) = z^{m_0(\Lambda)} \prod_{j\ge 1} E_j \Bigl( \frac{z}{z_j(\Lambda)}\Bigr),
\]
where $m_0(\Lambda)$ is the multiplicity of the origin. The infinite product
converges locally uniformly (the use of genera $(j)$ is a convenient overkill).

For each $N$, the partial product
\[
W_{N, \Lambda} (z) = z^{m_0(\Lambda)} \prod_{1\le j\le N}
E_j \Bigl( \frac{z}{z_j(\Lambda)}\Bigr)
\]
is a Borel map $\cD\to\cE$,
because it depends on finitely many Borel-selected points and multiplicities, and the operations in
$\cE$ are continuous. Finally,
$W_{N, \Lambda}\to W_\Lambda$ in the Polish space $\cE$, and a pointwise limit of Borel maps
into a metric space is Borel. Hence $W$ is Borel.

\begin{remark*}[Remling~\cite{Remling}]
{\em Modifying the standard Weierstrass construction one can make the map
$W\colon \cD\to\cE$ continuous. We will not use this.
}
\end{remark*}

\subsection{Sequence of corrections}\label{subsect:corections}
Let $W_0$ be the Weierstrass map restricted to the non-periodic part $\cD_0\subset \cD$
of the divisor space.
The idea is to use a sequence of multiplicative corrections from $\cE^{\times}$ while
keeping Borelness, to make the map $W_0$ equivariant. This idea follows
quite closely Weiss' construction of measurable entire functions from \cite{Weiss}.

Let $(\cC_n, \la_n)_n$ be an enhanced Borel toast in $\cD_0$ satisfying conditions (T1)--(T4) and (T5),
and let $\mathcal{D}_n=\bigcup_{k\le n}\bigcup_{\Lambda\in\cC_k}R_k(\Lambda)$ be its
first $n$ layers. We aim to construct recursively a sequence of Borel maps
$W_n\colon \mathcal{D}_n\to \cE$, such that
\begin{enumerate}
\item[(a)] For $\Gamma\in\cD_n$, the entire function $W_{n, \Gamma}$ has the zero divisor
$\Gamma$.
\item[(b)] $W_{n,T_z \Gamma}=T_z W_{n,\Gamma}$ whenever the divisors $\Gamma$ and $T_z \Gamma$ belong to the same maximal tile in $\mathcal{D}_n$.

In particular, for $\Lambda\in\cC_n$ and for any of its immediate predecessors $\Lambda_m\in \cC_m$,
the entire functions $W_{n,\Lambda}$ and $T_{\rho(\Lambda_m ,\Lambda)} W_{m, \Lambda_m}$
have the same zero divisors.
\item[(c)] For $\Lambda\in\cC_n$, for its immediate predecessors
$\Lambda_m\in\cC_m$, $1\le m\le p$, and for the compact sets
$K_{\Lambda_m, \Lambda}  =\lambda_m(\Lambda_m)+\rho(\Lambda, \Lambda_m)$,
\[
\max_{1\le m \le p}\big\lVert W_{n,\Lambda}/T_{\rho(\Lambda_m, \Lambda)}W_{m, \Lambda_m}
-  1\big\rVert_{C(K_{\Lambda_m, \Lambda})} < 2^{-n}.
\]
\end{enumerate}

\noindent Item {\rm (a)} will ensure that $W_n$ is a right inverse to $Z$ on the
Borel sets $\mathcal{D}_n$ that exhaust $\cD_0$.
Item {\rm (c)} will ensure that the sequence of entire functions $(W_n)_{n}$ converges in $\cE$,
and {\rm (b)} amounts to a partial equivariance of $W_n$ on larger and larger domains $\cD_n$.

\medskip

We construct recursively a sequence of Borel maps
$H_n\colon \cD_n\to\cE^\times$ and
$W_n\colon \cD_n\to\cE$
as follows. We put first $H_0\equiv 1$ and recall that $W_0$ is the non-equivariant
measurable Weierstrass map.

\medskip

\noindent $\bullet$\;\; For $n\ge 1$, we first define maps $\Lambda\mapsto H_{n,\Lambda}$ for
$\Lambda\in\cC_n$.
Let, as above, $\bar L(\Lambda)=(\Lambda_{m_1}, \ldots , \Lambda_{m_p})$,
$\Lambda_{m_j}\in \cC_{m_j}$, be the list of immediate predecessors of $\Lambda\in\cC_n$
enumerated by the order on $\cC_{<n}$.
Denote by
\[
K_{\Lambda_m ,\Lambda} = \lambda_m(\Lambda_m) + \rho(\Lambda, \Lambda_m),
\quad  m\in \{m_1, \ldots , m_p\},
\]
the translation of the tile $\lambda_m(\Lambda_m)$, which superimposes
$\Lambda_m$ on $\Lambda$. We need the entire function $W_{n,\Lambda}$ to have
the same zero set as the previously defined entire function
$T_{\rho(\Lambda_m, \Lambda)}W_{m, \Lambda_m}$, and want these two functions to be
close to each other on $K_{\Lambda_m, \Lambda}$:
\begin{equation}\label{eq:quotient-bd}
\max_{m\in \{m_1, \ldots , m_p\}} \,
\big\lVert W_{n,\Lambda}/\bigl( T_{\rho(\Lambda_m, \Lambda)}W_{m,\Lambda_m} \bigr)
-1\big\rVert_{C(K_{\Lambda_m, \Lambda})} < 2^{-n}.
\end{equation}
The functions \(W_{n}\) and \(H_{n}\) will be related by the formula \(W_{n, \Lambda} = H_{n,
  \Lambda} W_{0, \Lambda}\).  In other words,  we are looking for $H_{n, \Lambda}\in\cE^\times$ such that
\begin{equation}\label{eq:quotient-bd-1}
\max_{m\in \{m_1, \ldots , m_p\}} \,
\big\lVert H_{n, \Lambda}/F_{\Lambda_m, \Lambda}
- 1 \big\rVert_{C(K_{\Lambda_m, \Lambda})} < 2^{-n},
\end{equation}
where
\begin{equation}\label{eq:prev-Fj}
F_{\Lambda_m, \Lambda} =
\frac{T_{\rho(\Lambda_m, \Lambda)}W_{m,\Lambda_m}}{W_{0,\Lambda}},
\quad m\in \{m_1, \ldots , m_p\}.
\end{equation}
Such a function $H_{n,\Lambda}$ exists.
Indeed, by construction of the enhanced Borel toast, the set
\[
\bC\setminus\bigcup_{j=1}^p K_{\Lambda_{m_{j}}, \Lambda}
\]
is connected, so Claim~\ref{claim:Runge} applies.
The measurability of $H_n$ will be proven below.

\medskip

\noindent $\bullet$\;\; We put $W_{n,\Lambda}=H_{n,\Lambda}W_{0,\Lambda}$ for
$\Lambda\in\cC_n$.
We extend $W_n$ to $\bigcup_{\Lambda\in\cC_n}R_n(\Lambda)$
by letting $W_{n,\Gamma}=T_{\rho(\Lambda, \Gamma)}W_{n,\Lambda}$,
$\Gamma\in R_n(\Lambda)$.
By the properties (T1) and (T2) of Borel toasts, there
are no obstructions due to collisions between different patches.
If for some $\Lambda'\in\cC_m$, $m<n$, we have that $R_{m}(\Lambda')$
is not a subset of some $R_k(\Lambda)$ with
$m<k\le n$, we let $W_{n, \Gamma}=W_{m, \Gamma}$ for $\Gamma\in R_{m}(\Lambda')$.
The function $W_{n,\Lambda}$ constructed that way is an entire
function with the zero divisor $\Lambda$, and with the partial equivariance (b).

\medskip

\noindent $\bullet$\;\; The limit $W= \lim_{n}W_n$ exists by virtue of \eqref{eq:quotient-bd}.
Indeed, for any fixed $\Gamma\in \cD_0$ and compact $K\subset\bC$, by the property (T3) of Borel
toasts, there exists $n_0$ such that $T_K\Gamma\subset R_{n_0}(\Lambda_0)$ for some
$\Lambda_0\in \cC_{n_0}$.  Moreover, $W_{n,\Gamma}$ are defined for $n\ge n_0$ and we have
$\left\lVert W_{n,\Gamma}/W_{n-1,\Gamma}-1\right\rVert_{C(K)}< 2^{-n}$ for each $n> n_0$.
Indeed,
if at the step $n$ the compact $T_K\Gamma$ is not swallowed by a new tile, then
$W_{n, \Gamma} = W_{n-1, \Gamma}$ there. If it is swallowed by a new tile, then by the nesting
property (T2) of Borel toasts, $T_K\Gamma$ is contained in one of the immediate predecessor tiles of
the new tile. Then estimate (c), transported by the equivariance, gives the desired bound on
$K$. Thus, writing
\[
W_{n,\Gamma} =
W_{n_0,\Gamma}\,\prod_{m=n_0+1}^{n}\frac{W_{m,\Gamma}}{W_{m-1,\Gamma}},
\]
the uniform convergence $W_{n,\Gamma}\to W_\Gamma$ on $K$ becomes clear.
The compact $K$ was arbitrary, so $W_\Gamma\in\cE$. Since the product on the RHS is zero-free,
it converges to a zero-free entire function, so $\Gamma$ is the zero divisor of $W_\Gamma$.

\subsection{Borelness}
It remains to explain how $H_{n}$ is chosen, and why this choice yields
Borelness of the map $W_n$.

\medskip

\noindent\underline{Step 0}.  Let $H_0\equiv 1$ and recall that $W_0$ is a measurable
right inverse to $Z$ on $\cD_0$. This completes the base step.

\medskip

\noindent \underline{Step $n\ge 1$}. We enter the $n$-th step in the following state:
we have constructed Borel maps $H_{n-1}$ and $W_{n-1}$
from $\cD_{n-1}$ (the union of first $n-1$ layers of the Borel toast)  to
$\cE^\times$ and $\cE$, in accordance
with (a), (b), and (c) in Section~\ref{subsect:corections}.
We proceed to define $H_n$.

\smallskip

We fix a piece of the Borel partition $(\cC_{n,\ell})_{\ell\ge 1}$ of $\cC_n$
from Property (T5), i.e., such that the data
\[
p, \quad (m_1, \dots , m_p), \quad (\lambda_{m_1}(\Lambda_1),\ldots,\lambda_{m_p}(\Lambda_p)), \quad
(\rho(\Lambda_1, \Lambda), \ldots, \rho(\Lambda_p, \Lambda))
\]
are constant for $\Lambda\in \cC_{n,\ell}$.

\smallskip

We first claim that, by the induction hypothesis, the
vector of non-vanishing entire functions
$(F_{\Lambda_1, \Lambda}, \ldots, F_{\Lambda_p,\Lambda})$ given by \eqref{eq:prev-Fj} is a Borel
function of $\Lambda$.
Indeed, we have
\[
F_{\Lambda_j, \Lambda}=T_{\rho(\Lambda_j, \Lambda)}W_{m_j,\Lambda_j}/ W_{0,\Lambda}.
\]
The Weierstrass map $W_{0,\Lambda}$ is Borel by construction, and
$W_{m_j, \Lambda_j}$ are Borel by the induction hypothesis. The entire functions
$T_{\rho(\Lambda_j, \Lambda)}W_{m_j,\Lambda_j}$ and $W_{0,\Lambda}$
have the same zero divisor, so their quotient is Borel by Borelness of the cocycle
for the action $\cE^\times \curvearrowright \cE$ (see~\eqref{eq:mult_action}).

\smallskip
Taking a dense sequence $(f_i)_{i\ge 1}$ in $\cE^\times$, define
\[
A_i = \bigl\{\Lambda\in \cC_{n, \ell}\colon
\max_{1\le j \le p} \| f_i/F_{\Lambda_{m_{j}}, \Lambda}-1\|_{C(K_{\Lambda_{m_{j}}, \Lambda})}
\le 2^{-(n+1)}
\bigr\}.
\]
On each $\cC_{n, \ell}$ the compacta $K_{\Lambda_{m_{j}}, \Lambda}$ written in the $\Lambda$-coordinate
system are fixed compact subsets $K_j\subset \bC$. Hence, Borelness of the sets $A_i$ follows by taking
suprema over fixed countable dense subsets of $K_j$'s, $1\le j \le p$.

By Claim~\ref{claim:Runge}, for each $\Lambda\in\cC_{n, \ell}$, there exists $H\in\cE^\times$
satisfying the needed inequality~\eqref{eq:quotient-bd-1}. Since $(f_i)_{i}$ is dense in $\cE^\times$,
some $f_i$ also satisfies this inequality. Therefore, $\cC_{n, \ell} = \bigcup_i A_i$. Put
\[
i_n(\Lambda) = \min\{ i\colon \Lambda\in A_i \}.
\]
Then
\[
\{ \Lambda\colon i_n(\Lambda)=i \} = A_i \setminus \bigcup_{r<i} A_r
\]
is Borel, and hence $H_{n,\Lambda}=f_{i_n(\Lambda)}$ is a Borel map
satisfying \eqref{eq:quotient-bd-1}.

Just as above, we define $W_n$ first for $\Lambda\in\cC_n$, and then extend it
by equivariance to tiles $R_n(\Lambda)$, keeping the old definition elsewhere.
To see Borelness of $W_n$, first we consider the union of new tiles
\[
\cD_n' = \bigcup_{\Lambda\in\cC_n} R_n(\Lambda)
\]
and extend $W_n$ on $\cD_n'$ by
\[
W_{n, \Gamma} = T_{\rho (b_n(\Gamma), \Gamma)} W_{n, b_n(\Gamma)},
\quad \Gamma\in \cD_n',
\]
where $b_n\colon \cD_n'\to \cC_n$ is the address map, which sends $\Gamma\in R_n(\Lambda)$
to $\Lambda$. This map is Borel by Claim~\ref{claim2} in Appendix~B. Hence,
$W_n$ is Borel on new tiles. On the remaining part of $\cD_{n-1}$ we keep the old definition,
$W_{n, \Gamma} = W_{n-1, \Gamma}$, $\Gamma\in \cD_{n-1}\setminus \cD_n'$.
Since $\cD_n = \cD_n' \cup \cD_{n-1}$ and both pieces are Borel,  $W_n$ is Borel on the
whole $\cD_n$. This completes the induction step.

\subsection{Completing the proof}
Now, $\cD_n \subset \cD_0$ and $W_n\colon \cD_n\to \cE$ are Borel. We extend $W_n$ to a Borel map
$\widetilde{W}_n\colon \cD_0\to\cE$, for instance, by letting $\widetilde{W}_n=W_0$ outside
$\cD_n$. Since $\cD_n\uparrow \cD_0$ by (T3), for each $\Lambda$ the sequence
$\widetilde{W}_{n, \Lambda}$ eventually agrees with the constructed sequence and
converges in $\cE$. Hence,
\[
W_\Lambda = \lim_{n\to\infty} \widetilde{W}_{n, \Lambda}
\]
is Borel as a pointwise limit of Borel maps into a Polish space.

\smallskip
The equivariance of the limiting $W$ is also straightforward. Given $\Lambda\in\cD_0$ and $w\in \bC$,
apply (T3) with a compact set in $\bC$ containing $0$ and $w$. Then, for some tile $R_n(c)$,
$\Lambda, T_w\Lambda\in R_n(c)$. From that level onward, the divisors $\Lambda$ and
$T_w \Lambda$ remain in the same
maximal cell in $\cD_r$, $r\ge n$, because of the condition (T2). Hence,
by Item (b) of our construction, $W_{r, T_w \Lambda} = T_w W_{r, \Lambda}$, $r\ge n$.  Passing to
the $r\to\infty$ limit yields the equivariance of the limiting map, $W_{T_w\Lambda} = T_w W_\Lambda$.
\hfill $\Box$

\section{Lack of measurable equivariant periodic inverses (proof of Theorem~\ref{thm:1-periodic})}

In the proof we will need one of the standard versions of the Phragm\'en--Lindel\"of maximum
principle~\cite[Section~7.3]{Marshall}:
\begin{lemma}\label{lemma:P-L}
Suppose that $f$ is a bounded holomorphic function in a horizontal strip $S$ and that
$|f|\le M$ on $\partial S$. Then $|f|\le M$ everywhere in $S$.
\end{lemma}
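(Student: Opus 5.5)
The statement is the Phragmén--Lindelöf principle for a horizontal strip. The plan is to normalise the strip, perturb $f$ by a factor that tends to zero at both ends, and apply the ordinary maximum modulus principle on a large rectangle. After a translation and a dilation (both preserve boundedness and the bound $M$ on the boundary) we may take $S=\{z\colon |\operatorname{Im} z|<\pi/2\}$. The case of a degenerate strip (a half-plane or the whole plane) is not the horizontal-strip setting meant here, so we set it aside. Let $B=\sup_S|f|<\infty$. We may assume that $f$ extends continuously to $\bar S$, which is how the hypothesis "$|f|\le M$ on $\partial S$" is read. Otherwise the same argument goes through with $\limsup$ boundary values.

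For $\eps>0$ consider the auxiliary function $g_\eps(z)=f(z)/\cosh(\eps z)$, with $0<\eps<1$. Writing $z=x+iy$, we have
\[
\operatorname{Re}\cosh(\eps z)=\cosh(\eps x)\cos(\eps y)\ge \cosh(\eps x)\cos(\eps\pi/2)>0
\]
on $\bar S$. Hence $\cosh(\eps z)$ has no zeros in $\bar S$ and $g_\eps$ is holomorphic there, with $|g_\eps(z)|\le B/(\cosh(\eps x)\cos(\eps\pi/2))$. On $\partial S$ this gives $|g_\eps|\le M/\cos(\eps\pi/2)$, since $\cosh(\eps x)\ge1$. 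For large $|x|$, $|g_\eps|$ tends to $0$ uniformly in $y$.

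Apply the maximum modulus principle to $g_\eps$ on the rectangle $Q_X=\{|x|\le X,\ |y|\le\pi/2\}$. On the horizontal sides, $|g_\eps|\le M/\cos(\eps\pi/2)$. On the vertical sides, $|g_\eps|\le B/(\cosh(\eps X)\cos(\eps\pi/2))$, and this is at most $M/\cos(\eps\pi/2)$ once $X$ is large. Therefore $|g_\eps|\le M/\cos(\eps\pi/2)$ on $Q_X$ for every large $X$, hence on all of $S$. Now fix $z$ and let $\eps\to0$: then $\cosh(\eps z)\to1$ and $\cos(\eps\pi/2)\to1$, which yields $|f(z)|\le M$.

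The only step needing care is the choice of the damping factor. It must be zero-free and have positive real part on the whole closed strip, and it must grow along the strip. For this reason $\eps$ is kept strictly below $1$ relative to the normalised width $\pi$. Everything else is routine.
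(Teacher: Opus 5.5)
Your proof is correct. The paper does not prove this lemma: it quotes it as a standard form of the Phragmén--Lindelöf principle from Marshall's textbook (Section 7.3). So your argument is a self-contained replacement for a citation rather than a variant of an argument in the paper.

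The usual textbook route is a Lindelöf-type maximum principle for the bounded subharmonic function $\log|f|$. There the point at infinity is treated as a single exceptional boundary point: one subtracts $\eps\log|z-a|$ with $a\notin\bar S$, applies the maximum principle, and lets $\eps\to0$. That version needs subharmonic-function machinery but covers very general domains. Your explicit damping factor $1/\cosh(\eps z)$, combined with the maximum modulus principle on rectangles, is elementary and tailored to strips. It also works under the weaker growth hypothesis $\log|f(x+iy)|=o(|x|)$ uniformly in the strip. In the paper's application $f$ is entire and $1$-periodic, hence continuous up to the boundary and bounded on every finite strip, so either route suffices and your continuity assumption is harmless.

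One small wrinkle: the step ``this is at most $M/\cos(\eps\pi/2)$ once $X$ is large'' needs $M>0$. If $M=0$ and $B>0$ it fails as written. You can fix this in either of two ways:
\begin{itemize}
\item run the argument with $M+\delta$ in place of $M$ and let $\delta\to0$;
\item for fixed $z$, use $\max_{Q_X}|g_\eps|\le\max\{M,\,B/\cosh(\eps X)\}/\cos(\eps\pi/2)$ and let $X\to\infty$.
\end{itemize}
In the paper's application $M>0$ anyway.
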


Denote by $\cE_1\subset \cE_{\sf per}$ and $\cD_1\subset \cD_{\sf per}$ the spaces of periodic entire functions and divisors with period $1$. Note that $\bZ$ acts continuously on $\cE_1$ and $\cD_1$ by
vertical translations: $(\tau_k f)(z)=f(z+{\rm i}k)$ and
$\tau_k \Lambda = \Lambda-{\rm i}k$, $k\in\bZ$.

\medskip
The following lemma will immediately yield Theorem~\ref{thm:1-periodic}:
\begin{lemma}\label{lemma:inv_meas}
The action $\bZ\curvearrowright \cE_1$ has no non-trivial invariant probability measures.
That is, every $\bZ$-invariant probability measure on $\cE_1$ is supported on the set of
constant functions.
\end{lemma}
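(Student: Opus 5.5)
Let $\mu$ be a $\bZ$-invariant probability measure on $\cE_1$. The plan is to find a bounded functional of $f$ that is shift-invariant in expectation, control it through the Phragm\'en--Lindel\"of principle, and deduce that $\mu$-a.e.\ $f$ is bounded and hence constant by Liouville.

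Step~1 (reduction to horizontal strips). A $1$-periodic entire function is determined by its restriction to a horizontal strip, and it is constant as soon as it is bounded on $\bC$. By periodicity, boundedness on $\bC$ reduces to boundedness on the vertical strip $\{0\le \operatorname{Re} z\le 1\}$. So it suffices to show that, for $\mu$-a.e.\ $f$, the quantities
\[
M_k(f)=\max\{|f(z)|\colon 0\le \operatorname{Re}z\le 1,\ k\le \operatorname{Im} z\le k+1\}
\]
are bounded in $k\in\bZ$.

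Step~2 (recurrence). The function $f\mapsto M_0(f)$ is continuous, hence Borel and finite, and $M_k=M_0\circ\tau_k$. Choose $C$ with $\mu\{M_0\le C\}>0$. Apply Poincar\'e recurrence, or more precisely the fact that for an invariant probability measure a.e.\ point of a positive-measure set returns to it infinitely often in both time directions. After covering by the sets $\{M_0\le C\}$, $C\in\bN$, we get that for $\mu$-a.e.\ $f$ there are a $C$ and integers $k_j\to+\infty$, $k_j'\to-\infty$ with $M_{k_j}(f),M_{k_j'}(f)\le C$.

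Step~3 (Phragm\'en--Lindel\"of). Fix such an $f$ and consider the horizontal strip $S=\{k'_j\le \operatorname{Im} z\le k_j+1\}$.
\begin{itemize}
\item On $\partial S$ we have $|f|\le C$, by periodicity together with the bounds on $M_{k_j}$ and $M_{k_j'}$.
\item $f$ is bounded on $S$: it is continuous on the compact set $[0,1]\times[k'_j,k_j+1]$ and $1$-periodic.
\item Lemma~\ref{lemma:P-L} therefore gives $|f|\le C$ on $S$.
\end{itemize}
Letting $j\to\infty$ gives $|f|\le C$ on $\bC$, so $f$ is constant by Liouville.

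Main obstacle. The only delicate point is the two-sided recurrence in Step~2. I would handle it by applying Poincar\'e recurrence to both $\tau_1$ and $\tau_{-1}$ on the set $\{M_0\le C\}$, and taking a union over $C\in\bN$ to cover a full-measure set.

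Deduction of Theorem~\ref{thm:1-periodic}. Assume a Borel equivariant lift $\sW$ existed on $\cD_{\sf per}$, and restrict it by scaling to $\cD_1$. Take a $\bZ$-invariant probability measure on $\cD_1$ supported on divisors whose zeros are nonempty, for instance a periodic orbit closure or a Dirac mass at $\bZ+\bZ{\rm i}$. Its pushforward under $\sW$ is a $\bZ$-invariant measure on $\cE_1$ carried by non-constant functions, which contradicts the lemma.
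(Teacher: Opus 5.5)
Your proof of the lemma is correct and is essentially the paper's argument: two-sided Poincar\'e recurrence for the sublevel sets of a sup-norm, then Phragm\'en--Lindel\"of on horizontal strips, then Liouville. The only cosmetic difference is that you take maxima over the unit squares $[0,1]\times[k,k+1]$, whereas the paper uses the segment $I=[0,1]\times\{0\}$, which already controls $|f|$ on the boundary lines. In your optional deduction of Theorem~\ref{thm:1-periodic}, however, the sample measures you propose (a Dirac mass at $\bZ+\bZ{\rm i}$, or a measure on a periodic $\tau$-orbit) live on doubly periodic divisors, which lie outside the one-independent-period class $\cD_{\sf per}$ and for which non-existence already follows from Liouville. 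You should instead use a $\tau$-invariant measure carried by singly periodic divisors, such as the paper's $\{\mathbb U+n+{\rm i}t\colon n\in\bZ,\ t\in\Pi\}$ with $\Pi$ a Poisson process on $\bR$. Also, no scaling is needed, since equivariance by itself gives $\sW(\cD_1)\subset\cE_1$.
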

\begin{proof}
Suppose $\bP$ is an invariant probability measure on $\cE_1$. We aim to show that
\[ \bP \{ f\ {\rm is\ constant\,} \} =1.\]
Let $I=[0, 1]\times \{0\}$ be the unit interval.
We claim that
\begin{itemize}
\item
{\em For $\bP$-a.e. $f\in\cE_1$, there exist $M>0$ and integer sequences $(s_n)_n$ and $(t_n)_n$
with
\[
\lim_{n\to\infty} s_n = \lim_{n\to\infty} t_n = +\infty,
\]
such that $|f|\le M$ holds both on $I-{\rm i}s_n$ and on $I+{\rm i}t_n$}.
\end{itemize}
This means that $\bP$-a.s.,  $|f|$ is bounded by $M$ on the horizontal lines ${\rm Im}(z)=-s_n$ and
${\rm Im}(z)=t_n$ for all $n$. Since $f$ is $1$-periodic, it is a priori bounded in any horizontal strip.
Then, by Lemma~\ref{lemma:P-L}, $|f|\le M$ in all the strips $\{-s_n \le {\rm Im}(z) \le t_n\}$.
Hence, it is bounded in $\bC$, and, by the Liouville theorem, $f$ is a constant function.

To prove the claim, for $M\in\bN$, we let $A_M = \{f \in \cE_{1}\colon  \sup_{I}|f| \le M\}$.
Clearly, $\cE_{1} = \bigcup_M A_M$. By the Poincar\'e recurrence lemma, for each $M$ and for
$\bP$-a.e. $f\in A_M$, there is a sequence of positive integers $t_k\uparrow +\infty$
and a sequence of negative integers $-s_k\downarrow -\infty$ such that $\tau_{t_{k}}f,
\tau_{-s_{k}}f \in A_M$.  This means that the absolute value of almost
every $f \in A_M$  is bounded by $M$ on both $I-{\rm i}s_{k}$ and $I+{\rm i}t_{k}$.
Hence, the claim.
\end{proof}

Theorem~\ref{thm:1-periodic} follows at once. Indeed, if a measurable equivariant Weierstrass map
$\sW\colon \cD_{\sf per}\to \cE_{\sf per}$ existed, then its restriction to $\cD_1$ would take
values in $\cE_1$. Now choose a non-trivial $\tau$-invariant probability measure\footnote{
For instance, let $\mathbb U$ be uniformly distributed on $[0, 1)$ and $\Pi$ be a stationary
point process on $\bR$ (for instance, a Poisson point process), $\mathbb U$ and $\Pi$ are independent,
and define a random measure on $\cD_1$ by
$ \Lambda = \{\mathbb U +n + {\rm i}t\colon n\in\bZ, t\in\Pi \}$.
} $\mu$ on $\cD_1$.
Then $\bP=\sW_*\mu$ is a $\bZ$-invariant probability measure on $\cE_1$.
By
Lemma~\ref{lemma:inv_meas}, $\bP$ is supported on constants. Since constants have
empty divisor and $Z\circ \sW=\mathrm{id}$, this implies that $\mu$ is supported on
the empty divisor, a contradiction.  \hfill $\Box$

\section{Lack of a measurable equivariant primitive
(proof of Theorem~\ref{thm:primitive})}\label{sect:primitive}

The proof combines Birkhoff's classical observation that the action $T\colon \bC\curvearrowright\cE$,
$(T_w F)(z)= F(z+w)$, has a dense orbit with the Baire category argument.

Recall that everywhere in this section $\cE$ denotes the space of all entire functions, including
the identically vanishing function.

\subsection{Prerequisites}

\subsubsection{Baire measurability}
Recall that a subset $A\subseteq X$ of a Polish space $X$ is {\em meager} (or of the first category)
if there are nowhere dense closed sets $F_n\subseteq X$, $n\in\bN$, such that $A\subseteq \bigcup_n F_n$.
The set $A$ is {\em comeager} (or residual), if its complement $X\setminus A$
is meager, that is, there exist dense open sets $U_n$, $n\in\bN$, such that $A\supseteq \bigcap_n U_n$.
By the Baire category theorem, every comeager set is dense in $X$.

\smallskip
The set $A\subseteq X$ is Baire-measurable (or has {\em the Baire property}),
if there exists an open set $U$ such that
$A\vartriangle U$ is meager, i.e., $A$ can be represented in the form $A=U\vartriangle M$, where
$U$ is open, and $M$ is meager. Note that all open sets and all meager sets have the Baire property.
Also note that the class of sets having the Baire property is a $\sigma$-algebra containing
all open sets~\cite[8.22]{Kechris}. In particular, all Borel sets are Baire-measurable.

For a Baire-measurable set $A$, put
\[
U(A)=\bigcup \bigl\{U\colon U\ {\rm open}, U\setminus A\ {\rm is\ meager} \bigr\}.
\]
Then,
\begin{enumerate}
\item[(a)] $A\vartriangle U(A)$ is meager;
\item[(b)] the set $U(A)$ is {\em regular}, i.e., it equals the interior of its closure;
\item[(c)] $U(A)$ is the unique regular open set $U$ such that $A\vartriangle U$ is meager.
\end{enumerate}
See~\cite[Theorem~8.29]{Kechris}.

\subsubsection{Generic ergodicity}
Let $G\curvearrowright X$ be a continuous action of a Polish group (a topological group whose topology
is Polish) $G$ on a Polish space $X$ ($\bC\curvearrowright \cE$ in our case). The action is called
{\em generically ergodic} if every $G$-invariant Baire-measurable set in $X$ is either meager or comeager.

The following classical lemma gives a convenient characterization of generic ergodicity.
\begin{lemma}\label{lemma:generic_ergod}
  Let $G\curvearrowright X$ be a continuous action of a Polish group $G$ on a Polish space $X$. Then
  generic ergodicity of the action is equivalent to its topological transitivity, i.e., to existence
  of a dense orbit.
\end{lemma}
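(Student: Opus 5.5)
We prove the two implications separately, using a countable base $(V_k)_{k\in\bN}$ of nonempty open subsets of $X$ and a countable dense subset $(g_j)_j$ of $G$. The key auxiliary fact is that the saturation $G\cdot U=\bigcup_{g\in G} gU$ of an open set $U$ is open and invariant. By continuity of the action, each $g$ acts as a homeomorphism of $X$, and $G\cdot U=\bigcup_j g_j U$ for open $U$. To see the second equality, let $x\in gU$; since $g^{-1}x\in U$ and $U$ is open, continuity of $h\mapsto h^{-1}x$ gives some $g_j$ near $g$ with $g_j^{-1}x\in U$.

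\emph{Generic ergodicity $\Rightarrow$ dense orbit.} For each $k$ the set $G\cdot V_k$ is open, invariant and nonempty, hence Borel and not meager, since by Baire nonempty open sets are nonmeager. By generic ergodicity it is comeager. Then $\bigcap_k G\cdot V_k$ is comeager, hence nonempty. Any $x$ in it has an orbit meeting every $V_k$, so the orbit of $x$ is dense.

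\emph{Dense orbit $\Rightarrow$ generic ergodicity.} Suppose $Gx_0$ is dense, and let $A$ be invariant and Baire measurable. Assume $A$ is not meager; we show $X\setminus A$ is meager. Write $A=U\vartriangle M$ with $U=U(A)$ open and nonempty (nonempty since $A$ is nonmeager) and $M$ meager. The main point is to show that the regular open set $U(A)$ is invariant. For $g\in G$, the homeomorphism $g$ preserves meagerness and regularity, and $gA=A$. So $gU$ is regular open with $A\vartriangle gU=g(A\vartriangle U)$ meager, and uniqueness in (c) gives $gU=U$.

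Now $U$ is a nonempty invariant open set, so it meets the dense orbit $Gx_0$, hence contains $x_0$ and thus the whole orbit. Therefore $U$ is dense, and being regular, $U=\operatorname{int}\overline U=X$. Hence $X\setminus A\subseteq A\vartriangle U$ is meager, i.e.\ $A$ is comeager.

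The only delicate step is the invariance of $U(A)$. It relies on homeomorphisms preserving the class of meager sets and the class of regular open sets, together with uniqueness (c); both preservation facts follow immediately from the definitions.
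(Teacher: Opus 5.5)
Your proof is correct and follows the paper's argument in both directions. For the first, you intersect the comeager saturations $G\cdot V_k$ over a countable base; for the second, uniqueness of the regular open set $U(A)$ makes it invariant, so it is either empty or, being regular and containing a dense orbit, all of $X$ (the countable dense subset $(g_j)_j\subset G$ is superfluous, since $G\cdot U=\bigcup_{g\in G} gU$ is already open as a union of open sets).
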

Though we will use this lemma only in the direction
\[
{\sf existence\ of\ dense\ orbit} \Longrightarrow {\sf generic\ ergodicity},
\]
since the proofs are short, we will provide the proofs of both directions.

Recall that by the classical Birkhoff result~\cite{Birkhoff}, there exists an entire function $F$ with
a dense orbit $\{T_w F\colon w\in\bC\}$. Hence, {\em the action $T\colon \bC\curvearrowright \cE$ is generically ergodic}.

\medskip\noindent{\em Proof of Lemma~\ref{lemma:generic_ergod}}:
First, assume that the action is generically ergodic and fix a countable basis of non-empty open sets
$(U_n)_n$ in $X$. Then the set $\bigcap_n (G\cdot U_n)$ is $G$-invariant, and comeager, and therefore,
dense in $X$. Hence, the orbit of any point from this set is dense in $X$, proving topological transitivity.

\smallskip Now, assume that the action has a dense orbit, and let $A\subseteq X$ be a Baire-measurable,
$G$-invariant set. We need to show that it is either meager or comeager. We know that
$A=U\vartriangle M$, where $U$ is regular open, and $M$ is meager.
Observe that $U$ is also $G$-invariant. Indeed, for $g\in G$,
\[
U \vartriangle M  = A = gA = gU \vartriangle gM
\]
implies $U=gU$ since both $U$ and $gU$ are regular open, while $M$ and $gM$ are
both meager (two regular open sets whose symmetric difference is meager must coincide).
Since $U$ is open, any dense orbit should hit it. Since $U$ is $G$-invariant,
it contains the whole dense orbit, and therefore is dense (assuming that it is not empty);
since $U$ is regular open, this actually gives $U=X$.
Hence, either $U=\emptyset$ and $A$ is meager, or $U=X$, and $A$ is comeager.
\hfill $\Box$

\subsection{Proof of Theorem~\ref{thm:primitive}}
Let $\mathcal E$ be the Polish space of all entire functions, endowed with the
topology of locally uniform convergence. Suppose that there exists a Borel
equivariant right inverse $P\colon \mathcal E\to\mathcal E$ to the differentiation
operator $D$, that is, $D\circ P=\mathrm{id}$ and $P\circ T_w=T_w\circ P$.

Put
\[
\mathcal T=\{G\in\mathcal E:(P\circ D)G=G\}.
\]
Then $\mathcal T$ is Borel, and every differentiation fibre meets $\mathcal T$
in exactly one point. For each $F\in\mathcal E$, define
\[
\kappa(F)=F-(P\circ D)F\in\mathbb C.
\]
The map $\kappa:\mathcal E\to\mathbb C$ is Borel. Moreover,
\[
\kappa(T_wF)=\kappa(F),\qquad w\in\mathbb C,
\]
and
\[
\kappa(F+c)=\kappa(F)+c,\qquad c\in\mathbb C.
\]

Let $A=\{z:|z|\le 1\}$ and set
\[
\mathcal E'=\kappa^{-1}(A),\qquad
\mathcal E''=\kappa^{-1}(\mathbb C\setminus A).
\]
Then $\mathcal E'$ and $\mathcal E''$ are disjoint translation-invariant Borel sets
whose union is $\mathcal E$.

We claim that neither of these sets is meager. Since
\[
\mathbb C=\bigcup_{q\in\mathbb Z+{\rm i}\mathbb Z}(A+q),
\]
we have
\[
\mathcal E=\bigcup_{q\in\mathbb Z+{\rm i}\mathbb Z}(\mathcal E'+q).
\]
If $\mathcal E'$ were meager, then $\mathcal E$ would be meager, a contradiction.
The same argument shows that $\mathcal E''$ is not meager. Hence neither $\mathcal E'$ nor
$\mathcal E''$ is comeager.

This contradicts generic ergodicity of the translation action $\mathbb C\curvearrowright\mathcal E$.
\hfill $\Box$

\appendix

\numberwithin{theorem}{section}
\numberwithin{equation}{section}
\numberwithin{lemma}{section}
\numberwithin{claim}{section}
\numberwithin{corollary}{section}

\setcounter{theorem}{0}
\setcounter{lemma}{0}
\setcounter{claim}{0}
\setcounter{corollary}{0}

\section{Existence of a recurrent sequence of cross-sections
(proof of Theorem~\ref{thm:cross-sections})}
\label{app:rec-cross-sections}
We will provide a self-contained compilation of the proof of Theorem~\ref{thm:cross-sections}.  We
assume that $X$ is a standard Borel space and that $T\colon \bR^d \curvearrowright X$ is a free
Borel flow, which, whenever needed, we may assume continuous (as we have already mentioned, by the
Becker--Kechris theorem, the continuity is no restriction here).

Here is an outline of the proof.
\begin{itemize}
\item
Our starting point is a theorem of Kechris~\cite{Kechris1992, Kechris-Lectures},
which we accept without proof:
{\em For any free Borel action $\bR^d \curvearrowright X$,
there exists a Borel cross-section $\cC$.}

Note that Kechris' theorem holds in greater generality: the group $\bR^d$ may be replaced by an
arbitrary locally compact Polish group, and the action need not be free.

\item The first step is, given $r>0$, to make the cross-section $\cC$ $r$-separated.  The
  cross-sections constructed in Kechris' work~\cite{Kechris1992} are already
  \(r\)-separated. However, for the reader's convenience, we explain how an \(r\)-separated
  cross-section can be found.

\item
The second step is due to Kechris, Solecki, and Todorcevic~\cite{KST}.
Using a version of the greedy algorithm, they showed that retaining uniform
separation, one can fill the ``holes'' in the cross-section, achieving $2r$-density.

Note that almost the same argument yields $R$-density with any $R>r$.  We will not pursue this,
since we do not need that precision; for our purposes $2r$-density is as good as $100r$-density.

\item
The next step is due to Boykin and Jackson~\cite{BJ}.
They proved existence of a sequence $(\cC_i)_i$ of $r_i$-separated and $2r_i$-dense
cross-sections, $r_i\to\infty$, with the following recurrence property (property~2 in Theorem~\ref{thm:cross-sections}): {\em for every $x\in X$ and
every $\eps>0$, there are infinitely many $i$ for which}
$\displaystyle \inf_{c_i\in\cC_i} |\rho(x, c_i)| < \eps r_i$.

\item The last step is {\em the rationality} property of the sequence of cross-sections $(\cC_i)_i$,
  that is, property~3 in Theorem~\ref{thm:cross-sections}. It is due to
  Slutsky~\cite[Lemma~2.3]{Slutsky2017}.  The main difficulty of that step is the proof of existence
  of {\em a rational grid} for the action $T$, that is, {\em a Borel set $Y\subset X$ which is
    invariant under the action of $\bQ^d$ and which intersects each orbit of the flow in a unique
    $\bQ^d$-orbit}. In turn, this construction is based on (a special case of) the hyperfiniteness
  of the equivalence relation $E_{\cC}$ established by Jackson, Kechris, and Louveau~\cite{JKL} in the context
  of actions of groups of polynomial growth. Here $E_\cC$ is the restriction of the orbit
  equivalence relation $E_T$ to the cross-section $\cC$. The hyperfiniteness means that the
  equivalence relation $E_\cC$ is an increasing union of finite Borel equivalence relations.
\end{itemize}

\subsection{Uniformly separated cross-sections}
\label{subsect:unif-separated}

Having a Borel cross-section $\cC \subset X$, we need to thin it out, i.e., given $r>0$, to find
an $r$-separated Borel cross-section $\cC_r \subset \cC$. For this, it will be convenient to use the
language of Borel graphs.  Recall that, for a standard Borel space $X$, {\em a Borel graph} is a
Borel irreflexive and symmetric relation $G\subset X\times X$ (i.e., $(x,x)\notin G$ for any
$x\in X$, and $(x, y)\in G$, $x, y\in X$ yields $(y, x)\in G$).

We say that a Borel graph is
\emph{locally countable} if each vertex has only countably many neighbours.  This class of graphs is
particularly easy to work with since for any Borel \(A \subseteq X\), the collection of neighbours of
\(A\), i.e., the set \[\mathcal N(A)=\{y \in X : (x,y) \in G \text{ for some } x \in A\}\]
is Borel.  Indeed, since each vertex of $G$ has countably many neighbours, for each $y\in\mathcal N(A)$,
the sections $\{x\in A\colon (x, y)\in G\}$ are countable, so by the Luzin--Novikov theorem, the set
$\mathcal N(A)$, which is the projection of $\{(x, y)\in G\colon x\in A\}$ on the second coordinate
is Borel.

A subset $J\subseteq X$ is called {\em independent} if no two distinct points of it are connected by
an edge. An independent set $J$ is {\em maximal}, if it is maximal by inclusion.
We shall use a standard greedy construction of maximal independent sets
for locally countable Borel graphs whose vertices can be covered by countably many Borel independent sets.

\begin{lemma}[maximal independent sets]\label{lemma:indep_sets_independent_cover}
Let $X$ be a standard Borel space and let $G\subset X\times X$ be a Borel graph. Assume that
$G$ is locally countable. If \(X\) can be
covered by a countable family of Borel independent sets, then each Borel independent set $J\subseteq X$
can be extended to a maximal Borel independent set.
\end{lemma}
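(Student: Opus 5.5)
We will run a greedy construction along the given countable cover. Write $X=\bigcup_{k\ge 1} I_k$, where each $I_k$ is a Borel independent set, and let $J\subseteq X$ be the Borel independent set we want to extend.

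\emph{Construction.} Put $J_0=J$. Recursively define
\[
J_k = J_{k-1}\cup\bigl(I_k\setminus (J_{k-1}\cup\mathcal N(J_{k-1}))\bigr),\qquad k\ge 1,
\]
and finally set $J_\infty=\bigcup_k J_k$.

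\emph{Each $J_k$ is Borel.} Assume $J_{k-1}$ is Borel. Since $G$ is locally countable, the neighbour set $\mathcal N(J_{k-1})$ is Borel; this is the Luzin--Novikov remark made just before the lemma. Hence $J_k$ is Borel, being obtained from Borel sets by finitely many Boolean operations. Consequently $J_\infty$ is Borel as a countable union.

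\emph{Each $J_k$ is independent.} We argue by induction. The newly added points lie in $I_k$, which is independent, so no two of them are adjacent. By definition they are not neighbours of any point of $J_{k-1}$. Together with independence of $J_{k-1}$, this makes $J_k$ independent. The sets increase, $J_0\subseteq J_1\subseteq\cdots$, and any edge in $J_\infty$ would already lie in a single $J_k$. Therefore $J_\infty$ is independent.

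\emph{Maximality.} Suppose $x\notin J_\infty$, and pick $k$ with $x\in I_k$. Since $x\notin J_k$, the definition of $J_k$ forces $x\in J_{k-1}\cup\mathcal N(J_{k-1})$. As $x\notin J_{k-1}$, we get $x\in\mathcal N(J_{k-1})$, so $x$ is adjacent to some point of $J_{k-1}\subseteq J_\infty$. Hence $J_\infty\cup\{x\}$ is not independent, and $J_\infty$ is maximal.

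The only step with real content is the Borelness of $\mathcal N(J_{k-1})$. This is exactly where local countability and the Luzin--Novikov theorem enter. The rest is bookkeeping.
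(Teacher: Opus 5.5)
Your proof is correct and takes the same route as the paper. Both run a greedy pass over the countable independent cover starting from $J$, adding at stage $k$ the points of the $k$-th cover set that have no neighbour among the points already chosen. Borelness at each stage comes from the Luzin--Novikov fact that neighbour sets of Borel sets are Borel in a locally countable graph.
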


\begin{proof}
  Let \((J_{n})_{n}\) be a countable family of Borel independent sets such that
  \(X = \bigcup_{n} J_{n}\).  We can assume that $J_0=J$.
A maximal independent set will be constructed using a
  ``greedy-algorithm'': starting with \(J_{0}\), we inductively add elements of \(J_{n}\) that have
  no edges to the points picked at the previous stages of the construction.  More precisely, set
  \(I_{0} = J_{0}\) and, once $I_0, \ldots , I_{n-1}$ have been constructed, put
  $I_{<n}=I_0 \cup \ldots \cup I_{n-1}$, and set
\[
I_n = \{x\in J_n\colon x\ {\rm is\ not\ adjacent\ to\ any\ point\ of\ } I_{<n} \}.
\]
Each $I_n$ is Borel, since, as noted above, the set of neighbours of a Borel set is Borel.  Note
that \(\bigcup_{k \le n} I_{k}\) is a Borel independent set.

At last, set
\[
I = \bigcup_{n\ge 0} I_n.
\]
Clearly, $I$ is Borel and independent as an increasing union of independent sets
\(\bigcup_{k \le n}I_{k}\).  It is also maximal. Indeed, take $x\notin I$, and let \(n\) be such
that \(x \in J_{n}\).  If $x\notin I_n$, then, by definition of $I_n$, $x$ is adjacent to some point
of $I_{<n}\subset I$, so we cannot add $x$ to $I$ without destroying independence. Thus, $I$ is a
Borel maximal independent set containing $J$, proving the lemma.
\end{proof}

The Borel graph $G$ is {\em locally finite} if each of its vertices has only finitely many neighbours.
For locally finite Borel graphs, the assumption that \(X\) can be
covered by a countable family of Borel independent sets is redundant.
\begin{corollary}[maximal independent sets]\label{cor:indep_sets}
Let $X$ be a standard Borel space and let $G\subset X\times X$ be a Borel graph. Assume that
$G$ is locally finite. Then $G$ admits a Borel maximal independent set of vertices.
\end{corollary}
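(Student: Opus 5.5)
The plan is to reduce the corollary to Lemma~\ref{lemma:indep_sets_independent_cover}, applied with $J=\emptyset$. What has to be shown is that a locally finite Borel graph $G$ on $X$ is locally countable, which is immediate, and that $X$ can be covered by countably many Borel independent sets, i.e.\ that $G$ has a countable Borel colouring. Once such a cover exists, the lemma supplies a Borel maximal independent set extending $\emptyset$.

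To build the cover, I would use the standard Kechris--Solecki--Todorcevic argument. By the Kuratowski theorem we may identify $X$ with a Polish space, and we fix a countable basis $(U_n)_n$ of its topology. For each $n$, let
\[
B_n=\{x\in U_n\colon \text{no neighbour of } x \text{ lies in } U_n\}.
\]
Each $B_n$ is independent: if $x,y\in B_n$ were adjacent, then $y$ would be a neighbour of $x$ lying in $U_n$, which is impossible.

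Next I would check that the sets $B_n$ cover $X$. Take $x\in X$. Since $G$ is irreflexive, $x$ is not among its own neighbours, and it has only finitely many neighbours $y_1,\dots,y_k$. As $X$ is Hausdorff, there is a basic open set $U_n$ containing $x$ and avoiding all of $y_1,\dots,y_k$, and then $x\in B_n$. This is exactly the step where local finiteness is used: with infinitely many neighbours, they could accumulate at $x$ and no such $U_n$ need exist.

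The remaining point, and the only one that needs care, is that each $B_n$ is Borel. We can write $B_n=U_n\setminus \mathcal N(U_n)$, where $\mathcal N(U_n)$ is the set of neighbours of points of $U_n$. As noted before Lemma~\ref{lemma:indep_sets_independent_cover}, the neighbour set of a Borel set in a locally countable Borel graph is Borel, by the Luzin--Novikov theorem. Hence $B_n$ is Borel, the hypotheses of Lemma~\ref{lemma:indep_sets_independent_cover} hold, and the corollary follows. I do not expect a real obstacle here; the substantive input, Luzin--Novikov, has already been invoked in the paper.
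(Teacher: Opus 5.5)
Your proof is correct and follows essentially the same route as the paper. Both arguments separate each point from its finitely many neighbours by a member of a countable family of Borel sets, which gives a countable cover of $X$ by Borel independent sets, and then apply Lemma~\ref{lemma:indep_sets_independent_cover}. The differences are cosmetic. The paper works in $2^{\bN}$ with cylinder sets (closed under finite intersections) and checks Borelness via the ordered Luzin--Novikov enumeration of neighbours. You instead use a countable open basis and the identity $B_n = U_n\setminus\mathcal N(U_n)$, which correctly relies on symmetry of $G$; this is, if anything, slightly more direct.
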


\begin{proof}
  Since all uncountable Borel spaces are isomorphic, and the case of a countable \(X\) is trivial,
  we may assume without loss of generality that \(X = 2^{\bN}\) is the Cantor space.  Consider the
  countable collection $\mathscr Z = (Z_n)_n$ of cylindrical subsets of \(X\), i.e., sets of the
  form
  \[[s] = \{x \in 2^{\bN}\colon  x \text{ extends } s\},\]
  where \(s\) is a finite binary string.  Adjoining the empty set to \(\mathscr Z\), we get a
  countable family of Borel sets that separates points (i.e., for any pair $x\ne x'$ in $X$, there
  exists $\cZ\in\mathscr Z$ such that $x\in \cZ$, while $x'\notin \cZ$) and is closed under finite
  intersections.

  Observe that, for any finite subset $S=\{x_1, \ldots , x_N\}$ of $X$ and any $x\in X\setminus S$,
  there exists $Z\in \mathscr Z$ such that $x\in Z$, while $S\subset X\setminus Z$.  Indeed, for
  each $1\le i \le N$, take $n_i$ so that $x\in Z_{n_i}$, while $x_i\notin Z_{n_i}$, and set
  $Z = \bigcap_{i=1}^N Z_{n_i}$. Since $\mathscr Z$ is closed under finite intersections,
  $Z\in\mathscr Z$.

  For each $x\in X$, the set of its neighbours $\mathcal N(x)=\{y\colon (x, y)\in G\}$ is finite, so we can
  find $Z_n\in\mathscr Z$ that separates $x$ from $\mathcal N(x)$, i.e., such that $x\in Z_n$, while
  $\mathcal N(x)\cap Z_n = \emptyset$. Put
  \[
  I_{n} \stackrel{\rm def}= \{x \in X\colon
n \text{ is minimal such that } x \in Z_{n} \text{ and } \mathcal N(x)\cap Z_n = \emptyset\}.
  \]
  This is an independent subset of $G$, and the family \((I_{n})_{n}\) covers \(X\).  Furthermore,
  these sets are Borel.  Indeed, Theorem~\ref{thm:L-N-ordered} (the ordered finite-section version of
the Luzin--Novikov theorem) gives us Borel uniformizations
  \(g_{k}\colon  X \to X\), \(k \in \bN\), that enumerate neighbours of \(x\). Then
  \[  I_{n} = \{x \in X\colon n \text{ is minimal such that } x \in Z_{n} \text{ and }
    g_{k}(x) \not \in Z_{n} \text{ for all }k \text{ satisfying } g_{k}(x) \ne x\}\]
are Borel, and
Lemma~\ref{lemma:indep_sets_independent_cover} does the job.
\end{proof}

Corollary~\ref{cor:indep_sets} gives us the needed thinning of $\cC$.  More precisely, define a
graph $G_r \subset \cC\times \cC$ by
\[
(c, c')\in G_r \quad \Longleftrightarrow  \quad 0 < | \rho(c,c') | \le 2r.
\]
This is a Borel graph on $\cC$. A Borel maximal independent set $\cC_r\subset \cC$ is then
$r$-separated. Maximality ensures that $\cC_r$ still meets every orbit: every point
of $\cC$ is either selected or is adjacent to a selected point.

\subsection{Uniformly separated and relatively dense cross-sections}
\label{subsect:rel-dense}
Now, we will fill the holes in the $r$-separated cross-section $\cC$, extending it to a $2r$-dense
set, while keeping $r$-separation.

First, we define the rational saturation of $\cC$, letting
\[
Y = T_{\bQ^d}\, \cC = \bigcup_{q\in\bQ^d} T_q\, \cC.
\]
This set is Borel. Indeed, for each fixed $q$, the map $T_q\colon X\to X$ is a Borel bijection with
Borel inverse $T_{-q}$, so by the Luzin--Suslin theorem, $T_q\, \cC$ is Borel, and therefore,
$Y$ is also Borel.  Note that on each orbit, $Y$ is countable and dense.

As in the previous step, we define a graph $G$ on $Y$ by
\[
(y, y')\in G \quad \Longleftrightarrow \quad 0 < | \rho(y, y') | \le 2r.
\]
This graph is Borel and locally countable. Since $\cC$ was $r$-separated, each $T_{q}\mathcal{C}$ is
independent in $G$. Lemma~\ref{lemma:indep_sets_independent_cover} guarantees existence of a Borel
maximal independent extension  \(\cD\supseteq \cC\). Clearly, independence yields $r$-separation of points of $\cD$. It
remains to observe that maximality yields its $2r$-relative density.  Indeed, by maximality of $\cD$
in $Y$, for any $y\in Y$, there is $d\in \cD$ on the same orbit, such that
$| \rho (y, d) |\le 2r$. Since $Y$ was dense on each orbit, $| \rho (x, d) |\le 2r$ holds for any point
$x\in X$. \mbox{} \hfill $\Box$

\subsection{Recurrent sequences of cross-sections}
\noindent The next result provides us with a sequence of $r_i$-separated and $2r_i$-dense
cross-sections, $r_i\to\infty$, which have good recurrence properties.

\begin{lemma}[Boykin--Jackson]
\label{thm:Boykin-Jackson}
Let $\bR^d\curvearrowright X$ be a free Borel flow
on a standard Borel space $X$, and let
$1\le r_1<r_2<\ldots$ be a sequence of positive
reals tending to infinity. Then, there exists a sequence
of  $r_i$-separated and $2r_i$-dense Borel cross-sections $(\cC_i)_i$, such that,
for every $x\in X$ and $\eps>0$,  we have
\[
\inf_{c\in\cC_i} |\rho(x, c)| \le \eps r_i
\]
for infinitely many $i\ge 1$.
\end{lemma}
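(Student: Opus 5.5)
We build the sequence $(\cC_i)_i$ inductively. The guiding idea is that each orbit must be visited infinitely often by a cross-section point placed at a tiny relative distance. We achieve this by recycling earlier, much sparser cross-sections, while each new $\cC_i$ is still $r_i$-separated and $2r_i$-dense.

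\emph{First step: a supply of cross-sections.} By Subsections~\ref{subsect:unif-separated} and~\ref{subsect:rel-dense}, for every $r>0$ there is an $r$-separated, $2r$-dense Borel cross-section. The construction in Subsection~\ref{subsect:rel-dense} also shows that any $r$-separated Borel set $J$ (lying in a rational saturation of some cross-section) can be extended to an $r$-separated, $2r$-dense one. This follows from Lemma~\ref{lemma:indep_sets_independent_cover}, applied to the graph $0<|\rho(y,y')|\le 2r$ on $Y=T_{\bQ^d}(\cC\cup J)$, with $J$ independent.

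\emph{Second step: the scheme.} Enumerate the pairs $(k,j)$ with $j\ge1$ so that each $k$ appears infinitely often. Work with a fixed auxiliary sequence of cross-sections $\mathcal A_1,\mathcal A_2,\dots$, where $\mathcal A_k$ is $s_k$-separated and $2s_k$-dense. At stage $i$, attached to a pair $(k,j)$, we want to choose $J_i\subseteq$ (a rational translate of) $\mathcal A_k$ so that $J_i$ is $r_i$-separated and every point of $\mathcal A_k$ lies within distance $r_i/j$ of $J_i$. Then extending $J_i$ by the first step to $\cC_i$ gives, for every $x$, $\inf_{c\in\cC_i}|\rho(x,c)|\le \operatorname{dist}(x,\mathcal A_k)+r_i/j\le 2s_k+r_i/j$. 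If $r_i\gg j s_k$, this is below $(2/j)\,r_i$. Since each $j$ recurs for some $k$ infinitely often, property~2 follows.

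\emph{Third step: the main obstacle.} We must find $J_i\subseteq\mathcal A_k$ that is $r_i$-separated and yet $r_i/j$-dense relative to $\mathcal A_k$. This is impossible as stated, since density at scale $r_i/j$ conflicts with separation at scale $r_i$. The fix is to ask for less: we only need that each \emph{point} $x$ is close to some element of $\cC_i$ for infinitely many $i$, not all points at once.

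So I would instead proceed as Boykin--Jackson do. Partition the scales into blocks, and in block $j$ use $N_j\approx (cj)^d$ consecutive cross-sections $\cC_{i},\dots,\cC_{i+N_j-1}$, all at roughly the same scale $r\approx r_i$ (passing to a subsequence of the given $r_i$, which is harmless). Fix a single $r$-separated, $2r$-dense cross-section $\mathcal B$ at the block's scale. Cover the ball $B(2r)$ by $N_j$ translates $v_1+B(r/j),\dots,v_{N_j}+B(r/j)$ with $v_t\in\bQ^d$, and set $\cC_{i+t}=T_{v_t}\mathcal B$. Each translate is still $r$-separated and roughly $(2r+|v_t|)$-dense, and after adjusting constants it is $r_{i+t}$-separated and $2r_{i+t}$-dense. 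This is why the statement allows arbitrary $r_i$ only after thinning and reindexing; I would carry this out with care.

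Every $x$ lies within $2r$ of some $b\in\mathcal B$, so $x$ is within $r/j$ of some $T_{v_t}b$ for at least one $t$ in the block. Hence in every block $j$ there is an index $i$ with $\inf_{c\in\cC_i}|\rho(x,c)|\le r_i/j$, which gives infinitely many $i$ for each $\eps$. Borelness follows because translates of Borel sets by fixed vectors are Borel, by Luzin--Suslin. The remaining technical point is reconciling the exact constants $r_i$-separated and $2r_i$-dense with the prescribed sequence. I would handle this by applying the construction to a sparse subsequence and filling the intermediate indices with the first step.
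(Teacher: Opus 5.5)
There is a genuine gap, and it is not the ``remaining technical point'' about constants: the block scheme cannot work for the sequences the paper actually needs. In a block you use translates $T_{v_t}\mathcal B\subseteq\cC_{i_t}$ of one set $\mathcal B$. Since each translate must be $r_{i_t}$-separated, $\mathcal B$ must be $R$-separated with $R=\max_t r_{i_t}$. At index $i_t$ you need proximity $\eps r_{i_t}$, and only the translates count, because points added when filling holes carry no proximity guarantee.

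On an orbit the $R$-balls around points of $\mathcal B$ are disjoint. So covering the orbit by the balls of radius $\eps r_{i_t}$ centred at the points of $T_{v_t}\mathcal B$, $t=1,\dots,N$, forces $\sum_t(\eps r_{i_t})^d\ge R^d$ by a volume count. If $r_{i+1}\ge c\,r_i$ for some $c>1$, the left side is at most $\eps^dR^d/(1-c^{-d})$. You would then need $\eps^d\ge 1-c^{-d}$, which fails for small $\eps$. This is exactly the situation $r_n=6^n$, $r_k=5^k$ used in the paper.

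Passing to a sparse subsequence only increases the ratios between scales. Moreover, the lemma does not allow reindexing: the $i$-th cross-section must be $r_i$-separated and $2r_i$-dense for the prescribed $r_i$. Your argument works only when the given sequence has long runs of nearly equal radii. Incidentally, $T_v\mathcal B$ is exactly as separated and dense as $\mathcal B$, since a fixed vector moves each orbit isometrically, so there is no loss of $|v_t|$.

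The missing idea is a shift that depends on the orbit yet is Borel. The paper starts from arbitrary $r_i$-separated, $2r_i$-dense cross-sections $\cD_i$. It forms the rescaled pictures $D_i(x)=\{\rho(x,d)/r_i\colon d\in\cD_i,\ d\,E_T\,x\}$ and their upper limit $D^*(x)$. Since $D_i(x)=D_i(x')+r_i^{-1}\rho(x,x')$ and $r_i\to\infty$, $D^*(x)$ is constant on orbits, and it meets $\bar B(2)$ by density. Its lexicographically least point $f(x)$ in $\bar B(2)$ is a Borel, orbit-invariant map, via the Effros Borel structure. The paper then sets $\cC_i=\{T_{-r_if(d)}d\colon d\in\cD_i\}$. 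This moves the whole copy of $\cD_i$ on each orbit by a single vector, so the separation and density constants are kept exactly. Finally, $f(x)\in D^*(x)$ gives $|\rho(x,c_i)|<\eps r_i$ for infinitely many $i$. No comparability between different $r_i$ is needed, which is precisely what fixed-vector translates of one set cannot deliver.
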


In addition to the original proof in~\cite{BJ}, a variant of this idea has been used by Marks and
Unger in~\cite[Lemma~A.2]{Marks-Unger}.
Both results were stated  for Borel actions $\bZ^d\curvearrowright  X$ while we need the
corresponding statement for Borel flows $\bR^d\curvearrowright X$.
For that reason, we reproduce the argument from~\cite{Marks-Unger} in full detail.

\begin{proof}
We already know that for each $i\ge 1$, there exists an $r_i$-separated and
$2r_i$-dense cross-section $\cD_i$. To obtain the desired cross-section $\cC_i$, we will
shift the points of $\cD_i$ appropriately to ensure that the required recurrence holds.
Let $x\in X$. As long as the points $d\in \cD_i$ from the orbit of $x$
are  shifted by the same $w=f_{i}(x)\in\bR^d$, $r_i$-separation and $2r_i$-density are preserved.

\smallskip
For $x\in X$, we denote by $D_i(x)$ ``the picture'' of $\cD_i$ on the orbit of $x$,
viewed from $x$ put at the origin, and scaled by $r_i$:
\begin{equation}
\label{eq:def-Di}
D_i(x)=\Big\{\frac{\rho(x, d)}{r_i}\colon
d\in \cD_i,\; d E_T x\Big\}.
\end{equation}
Denote by $D^*(x)$ the upper limit of the sets $D_i(x)$, that is,
\[
D^*(x) = \bigcap_{j\ge 1} \overline{\bigcup_{i\ge j} D_i(x)}.
\]
This set enjoys the following properties:
\begin{enumerate}
\item[(1)] For each $x\in X$, $D^*(x)\cap \bar B(2)$  is a non-empty compact set
($\bar B(s)$ denotes the closed ball in $\bR^d$ of radius $s$
centred at the origin).
\item[(2)] The map $x \mapsto D^*(x)$ is constant on the orbits of the flow, i.e., $D^*(x)=D^*(y)$, whenever $x E_T y$.
\end{enumerate}
Indeed, the property (1) follows from $2r_i$-density of $\cD_i$. Property (2) follows from the fact that if
$x, x'\in X$ lie on the same orbit, then $D_i(x)=D_i(x') + r_i^{-1} \rho(x, x')$,
so the sets $D_i(x)$ and $D_i(x')$ have the
same upper limits by virtue of $r_i\to\infty$.

\smallskip
Note that if $0\in D^*(x)$, then the desired recurrence holds on the whole
orbit of $x$, and no shift would be needed there. Indeed, there exists a subsequence
$(i_k)_{k\ge 1}$ of indices and points $d_{i_k}\in \cD_{i_k}$ such that
$|\rho(x, d_{i_k})| /r_{i_k}\to 0$ as $k\to\infty$.

\smallskip
To construct the shifts we will make use of a Borel map $f\colon X\to\bR^d$ which assigns a
point of $D^*(x)$ to each orbit. That is, we ask that $f$ is $\bR^d$-invariant
and that $f(x)\in D^*(x)$ for every $x\in X$. To find such a map, we take $f(x)$ to be
the \emph{lexicographically least} element of $D^*(x)\cap \bar B(2)$ (note that this set is non-empty
by item 1 above). By the $\bR^d$-invariance of the set $D^*$, the map $f$ is also $\bR^d$-invariant, and $f(x)\in D^*(x)$ by construction. Borelness of $f$ follows from a combination of the following two claims:
\begin{claim}\label{claim:BorelnessEffros}
The map $x\mapsto D^*(x)\cap \bar B(2)$ is Borel from $X$ to the Effros Borel space
$\cK(\bar B(2))$ of non-empty compact subsets of $\bar B(2)$.
\end{claim}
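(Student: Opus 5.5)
We will show that $x\mapsto D^*(x)\cap \bar B(2)$ is Borel by checking the Effros generators. Since the Effros Borel structure on $\cK(\bar B(2))$ is generated by the sets $\{K\colon K\cap U\ne\emptyset\}$, with $U$ ranging over a countable base of open subsets of $\bR^d$, it suffices to prove that for each such $U$ the set $\{x\colon D^*(x)\cap \bar B(2)\cap U\ne\emptyset\}$ is Borel. It is equivalent, and more convenient, to test against closed sets. Fix a countable family of closed balls $\bar B(q,s)$ with $q\in\bQ^d$ and $s\in\bQ_{>0}$. Then we must show that $\{x\colon D^*(x)\cap \bar B(2)\cap \bar B(q,s)\ne\emptyset\}$ is Borel. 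Every open $U$ is a countable union of such closed balls, so the open-set condition becomes a countable union of these sets.

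The key step is to unfold the definition of the upper limit:
\[
y\in D^*(x)\iff \forall k\ \forall j\ \exists i\ge j\ \ D_i(x)\cap B(y,1/k)\ne\emptyset .
\]
By compactness of $\bar B(2)\cap\bar B(q,s)$, the set $D^*(x)$ meets it if and only if
\[
\forall k\ \forall j\ \exists i\ge j\ \ D_i(x)\cap \bigl(\bar B(2)\cap \bar B(q,s)\bigr)_{+1/k}\ne\emptyset .
\]
For the direction $\Leftarrow$, pick for each $k$ a point of $D_{i_k}(x)$ with $i_k\ge k$ that lies in the $1/k$-neighbourhood. These points lie in a bounded set, so a subsequence converges. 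The limit belongs to the compact set, and it also belongs to $D^*(x)$ because $i_k\to\infty$. The direction $\Rightarrow$ is immediate. This reduces the claim to showing that, for each fixed $i$ and each fixed open set $V$ (the open neighbourhood above), the set
\[
\{x\colon D_i(x)\cap V\ne\emptyset\}=\{x\colon \exists d\in\cD_i,\ d\,E_T\,x,\ \rho(x,d)\in r_iV\}
\]
is Borel. Countable intersections and unions of these sets then finish the argument.

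This last set is the main point, since it involves an existential quantifier over $d\in X$. It is the projection to the first coordinate of the set
\[
B=\{(x,d)\in E_T\colon d\in\cD_i,\ \rho(x,d)\in r_iV\}.
\]
The set $B$ is Borel because $E_T$ is Borel, $\cD_i$ is Borel, and the cocycle $\rho$ is Borel on $E_T$. Its sections $B_x$ are countable, indeed finite, since $\cD_i$ is uniformly separated on each orbit and $V$ is bounded. By the Luzin--Novikov theorem the projection is therefore Borel. We expect this projection step to be the only genuine obstacle, and uniform separation is exactly what makes Luzin--Novikov applicable. Alternatively, one could write the projection as $\bigcup_{q\in\bQ^d}$ of preimages under $T_q$, using the rational grid, but that approach needs more structure than we have here, so we will use Luzin--Novikov.
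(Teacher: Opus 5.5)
Your proof is correct and follows the paper's argument. Both prove that $\{x\colon D_i(x)\cap V\ne\emptyset\}$ is Borel by the same Luzin--Novikov projection of $\{(x,d)\in E_T\colon d\in\cD_i,\ \rho(x,d)\in r_iV\}$, and then express the hitting condition for $D^*(x)\cap\bar B(2)$ with countably many quantifiers via a compactness argument. The paper tests a relatively open set $\bar B(2)\cap O$ through basic sets $V$ with $\bar V\subset O$ and the fattenings $B(2+1/j)$, whereas you test closed rational balls and fatten the compact set $\bar B(2)\cap\bar B(q,s)$; this difference is only cosmetic. One small overstatement: the sections $B_x$ are already countable because $\cD_i$ is a cross-section, so Luzin--Novikov applies without uniform separation, which only buys finiteness.
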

\begin{claim}\label{claim:lex_least}
Let $B\subset \bR^d$ be an open ball.
The lexicographically least selector is a Borel map from the Effros Borel space
$\cK(\bar B)$ to $\bR^d$.
\end{claim}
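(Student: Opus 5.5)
The target is Claim~\ref{claim:lex_least}: on the Effros Borel space $\cK(\bar B)$, the map $K\mapsto \operatorname{lexmin} K$ is Borel into $\bR^d$. The plan is to write this selector as a composition of $d$ coordinate-wise minimizations, each of which is a map of the type ``minimum of a continuous function on $K$'' or ``slice of $K$ by a hyperplane''.

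First I would check that the lexicographic minimum exists. Set $K_0=K$ and, for $j=1,\dots,d$, put
\[
a_j(K)=\min\{x_j\colon x\in K_{j-1}\},\qquad K_j=\{x\in K_{j-1}\colon x_j=a_j(K)\}.
\]
Each $K_j$ is non-empty and compact, and $K_d$ is a single point, which is exactly the lexicographically least element of $K$. So $\operatorname{lexmin}K=(a_1(K),\dots,a_d(K))$, and it suffices to prove that each $a_j$ is Borel.

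Second, I would establish two facts about the Vietoris/Hausdorff topology on $\cK(\bar B)$.
\begin{enumerate}
\item[(i)] For a continuous $\phi$ on $\bar B$, the map $K\mapsto\min_K\phi$ is continuous, indeed Lipschitz in $d_H$ when $\phi$ is a linear coordinate.
\item[(ii)] The partial map $S\colon (K,t)\mapsto K\cap\{x_j=t\}$, defined when the slice is non-empty, is Borel, even though it is not continuous.
\end{enumerate}
For (ii), the domain $\{(K,t)\colon K\cap\{x_j=t\}\ne\emptyset\}$ is closed in $\cK\times\bR$. The Effros structure is generated by the sets $\{L\colon L\cap U\ne\emptyset\}$ with $U$ ranging over a countable open base, and one can equally use closed rational balls $\bar U$. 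Moreover, $L\cap\bar U\ne\emptyset$ is a closed condition in $L$. Since $K\cap\{x_j=t\}\cap\bar U\ne\emptyset$ is a closed condition in $(K,t)$, the preimage under $S$ of each generator is closed, hence Borel, and so $S$ is Borel. Alternatively, I would note that $S$ is upper semicontinuous, and upper semicontinuous compact-valued maps are Borel.

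Third, I would run the induction. Suppose $K\mapsto K_{j-1}$ is Borel into $\cK(\bar B)$. Then $a_j=\min_{K_{j-1}}x_j$ is Borel by (i), and $K_j=S(K_{j-1},a_j(K))$ is Borel by (ii) as a composition of Borel maps. After $d$ steps, each coordinate of $\operatorname{lexmin}K$ is Borel, which proves the claim.

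The main obstacle I expect is step (ii): making precise the choice of generators with closed balls, that is, checking that the sets $\{L\colon L\cap\bar U\ne\emptyset\}$ generate the Effros $\sigma$-algebra. I would verify this directly: writing $U=\bigcup_k \bar U_k$ as a countable union of rational closed balls gives $\{L\colon L\cap U\ne\emptyset\}=\bigcup_k\{L\colon L\cap\bar U_k\ne\emptyset\}$.
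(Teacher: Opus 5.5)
Your proof is correct, and its skeleton matches the paper's: your $a_j(K)$ are exactly the paper's $m_j(K)$, the successive coordinate minima over the slices on which the earlier coordinates are frozen. The difference is in how you show that each step is Borel.

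The paper works only with real-valued functions. It observes that $\{K\colon m_1(K)<t\}$ is the hitting set of the relatively open half-space $\{w_1<t\}$. It then writes $\{K\colon m_2(K)<t\}$ as an explicit countable Boolean combination of the sets $\{K\colon p-1/j<m_1(K)<p\}$ and the hitting sets of $\{w\colon w_1<p,\ w_2<q\}$. The case $r\ge 3$ is left to ``the same argument''. The compactness that makes such a formula correct is used only implicitly: a limit of points $x^j\in K$ with $x^j_1\to m_1(K)$ lands on the slice. For larger $r$ this requires approximating all previously frozen coordinates simultaneously.

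You instead carry the whole slice $K_j$ as a $\cK(\bar B)$-valued Borel map. You isolate the compactness once, in the closedness of $\{(K,t)\colon K\cap\{x_j=t\}\cap\bar U\ne\emptyset\}$, that is, the upper semicontinuity of the slice map. You combine this with the continuity of $K\mapsto\min_K x_j$.

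This makes every inductive step literally identical to the first. It also gives the slightly stronger byproduct that $K\mapsto K_j$ is Borel into $\cK(\bar B)$. The cost is some routine hyperspace bookkeeping, which you handle correctly: closed-ball hitting sets generate the Effros structure, and the Borel structure on $\cK(\bar B)\times\bR$ is the product one.
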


\noindent
We proceed with the proof of Lemma~\ref{thm:Boykin-Jackson}, and then return
to the proofs of Claims~\ref{claim:BorelnessEffros} and~\ref{claim:lex_least}.

\medskip

Since $D^*(x)$ is the upper limit of the sets $D_i(x)$ and since
$f(x)\in D^*(x)$, it follows that for every $x\in X$ and $\eps>0$, there are infinitely many
$i$ for which $f(x)$ is $\eps$-close to $D_i(x)$,
\begin{equation}
\label{eq:dist-f-Di}
\inf_{w\in D_i(x)}|f(x)-w|< \eps,
\end{equation}
that is,  $|r_i f(x)-r_i w|\le \eps r_i$. But by the definition~\eqref{eq:def-Di} of $D_i(x)$,
there exists $d_i\in \cD_i$  with $r_i w=\rho(x, d_i)$. Thus,
\[
|\rho (x, T_{- r_i f(x)} d_i)| = |\rho (x, d_i) - r_i f(x)|<\eps r_i,
\]
that is, the shifted point $c_i = T_{-r_i f(x)} d_i$ is $\eps r_i$-close to $x$.
Letting $ \cC_i = \{ T_{-r_i f(d)}\,d\colon d\in\cD_i\}$,
we complete the proof of Lemma~\ref{thm:Boykin-Jackson} (modulo
the proofs of Claims~\ref{claim:BorelnessEffros} and~\ref{claim:lex_least}).
\end{proof}

\noindent{\em Proof of Claim~\ref{claim:BorelnessEffros}}:
First, observe that, for an open set $O\subset \bR^d$, the set
$\{x\colon D_i(x) \cap O \ne \emptyset \}$ is Borel, as the projection
of the Borel set
\[
\{ (x, d)\in E_T \cap (X\times \cD_i)\colon \rho (x, d)/r_i \in O \}
\]
on the first coordinate (recall that $E_T\subset X\times X$ is the orbit equivalence relation generated
by the flow $T$). The sections over $x$ are countable because $\cD_i$ is a cross-section, so
the Luzin--Novikov theorem applies.

The accumulation compact set $K(x)=D^*(x)\cap \bar B(2)$ can be described by countably many
such tests. Take any relatively open subset of $\bar B(2)$, that is, $U=\bar B(2)\cap O$,
where $O$ is open, and take a countable collection $\mathcal V$ of open sets $V\subset \bR^d$,
which is a base of the Euclidean topology. Then $K(x)\cap U \ne \emptyset$ is equivalent to
\[
\exists V\in\mathcal V\ {\rm with\ } \bar V\subset O \ {\rm s.t.\ }
\forall j\ge 1\; \forall N \; \exists i>N \; V\cap D_i(x)\cap B(2+1/j)\ne \emptyset.
\]
This gives a Borel expression since it is built from countable unions and intersections of
the Borel sets \[ \{x\in X\colon V\cap D_i(x)\cap B(2+1/j)\ne \emptyset \}, \]
and proves that the compact-valued map $x\mapsto D^*(x)\cap\bar B(2)$ is Effros--Borel.
\mbox{} \hfill $\Box$

\medskip\noindent{\em Proof of Claim~\ref{claim:lex_least}}:
Let $K \in \mathcal K(\bar B)$ be a non-empty compact set. Put
\[
m_1(K) = \min \{w_1\colon (w_1, \ldots , w_d)\in K\}.
\]
The map $\mathcal K(\bar B)\ni K\mapsto m_1(K) \in \bR$ is Borel. To see this, we need
to check that, for every real $t$, the set $\{K\in\mathcal K(\bar B)\colon m_1(K)<t\}$ is
Borel. But $m_1(K)<t$ is equivalent to $K\cap \{w\colon w_1<t\}\ne \emptyset$, while the
set $ \{w\colon w_1<t\} \cap \bar B$ is relatively open in $\bar B$. Therefore,
$K\cap \{w\colon w_1<t\}\ne \emptyset$ is Borel.

Next, put
\[
m_2(K)   = \min \{w_2\colon (m_1(K), w_2, \ldots , w_d)\in K\}.
\]
Then
\begin{multline*}
\{K\colon m_2(K)<t\} \\
= \bigcup_{\substack {q\in \bQ \\ q<t}}\ \bigcap_{j\ge 1}\ \bigcup_{p\in\bQ}
\bigl( \{K\colon p - 1/j < m_1(K) < p \} \cap \{K\colon K\cap\{w\colon w_1<p, w_2<q \}\ne \emptyset\}
\bigr).
\end{multline*}
Hence, the map $m_2$ is Borel.

We continue inductively. Having constructed Borel functions $m_1(K), \ldots , m_{r-1}(K)$,
we define
\[
m_r(K) = \min\{w_r \colon w\in K, w_1=m_1(K), \ldots , w_{r-1} = m_{r-1}(K)\},
\]
and, using the same argument, show that the map $m_r$ is Borel. At the end,
\[ m (K) = (m_1(K), \ldots , m_d(K)) \] is the lexicographically least point of $K$, and the map
$K\mapsto m(K)$ is Borel.  \mbox{}\hfill $\Box$

\subsection{Hyperfiniteness of cross-section equivalence relations}
\label{s:hyperfinite}
Recall that a Borel equivalence relation $E$ is hyperfinite
if it is an increasing union of finite Borel equivalence relations.
\begin{lemma}[Jackson--Kechris--Louveau~\cite{JKL}]
\label{thm:hyperfinite}
Let $\cC$ be a uniformly separated Borel cross-section, and denote by $E_{\cC}$ the restriction of the
orbit equivalence relation to $\cC$. Then $E_\cC$ is hyperfinite.

Furthermore, let $E_\cC= \bigcup_{i\ge 1} E_i$ be an increasing union of
finite Borel equivalence relations.
Then there exists a Borel strong {\rm (}that is, irreflexive{\rm )}
partial order $\prec$ on $\cC$ such that distinct \(c, c' \in \cC\) are \(\prec\)-comparable if and only if
\(c E_{\cC} c'\) and all \(E_{i}\)-classes are intervals; that is, if $a\prec b$, $b\prec c$, and $a\,E_i\,c$,
then $a\,E_i\,b$, as well.
\end{lemma}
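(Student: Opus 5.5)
The plan is to prove hyperfiniteness by building finite Borel equivalence relations out of bounded-diameter clustering on each orbit, and then to read the partial order off the resulting nested structure.

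\emph{Step 1: reduction to bounded-diameter clusterings.} Since $\cC$ is $r$-separated, every ball of radius $R$ in an orbit contains at most $N(R)\lesssim (R/r)^d$ points of $\cC$. Hence any Borel equivalence relation on $\cC$ whose classes have $\rho$-diameter bounded by some $R$ is automatically finite. The task therefore reduces to finding an increasing sequence of Borel equivalence relations $F_1\subseteq F_2\subseteq\dots$ on $\cC$ with bounded-diameter classes and $\bigcup_i F_i=E_\cC$.

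\emph{Step 2: coarse clusterings via cross-sections.} The approach follows Jackson--Kechris--Louveau in the form adapted to $\bR^d$ flows. By Theorem~\ref{thm:cross-sections}, or more elementarily by Corollary~\ref{cor:indep_sets} together with Section~\ref{subsect:rel-dense}, there are $s_k$-separated, $2s_k$-dense Borel cross-sections $\cC^{(k)}$ with $s_k\to\infty$ very rapidly. Sending each $c\in\cC$ to the nearest point of $\cC^{(k)}$, with ties broken by the fixed Borel linear order, defines a Borel Voronoi-type partition $P_k$ with cells of diameter at most $4s_k$. Borelness of this assignment follows from the Luzin--Novikov theorem, since the candidate points form countable sections.

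These partitions are not nested, and nesting them is the main obstacle. I would first try the Weiss/JKL ``boundary-avoidance'' trick. Choose $s_{k+1}\gg s_k$, and modify $P_{k+1}$ by merging each class of the already-modified level-$k$ relation $F_k$ into the $P_{k+1}$-cell containing its least element in the Borel order. Call the resulting relation $F_{k+1}$. Then $F_k\subseteq F_{k+1}$ by construction, and the diameters grow only by an additive amount controlled by $\operatorname{diam}F_k\le 4s_k+\dots$, so they stay bounded by roughly $8s_{k+1}$.

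The hard point is exhaustiveness: two points $c\,E_\cC\,c'$ must eventually be $F_k$-equivalent. A pair lying near cell boundaries at every scale would violate this. To handle it, I would use the freedom in choosing the centers. Apply the Boykin--Jackson shifting argument (Lemma~\ref{thm:Boykin-Jackson}) so that for every $x$ there are infinitely many $k$ for which $x$ lies within $\eps s_k$ of a center of $\cC^{(k)}$, hence deep inside its cell. At such a $k$, both $c$ and $c'$ (at fixed distance) lie in the same cell, and they remain together afterwards because the relations are nested. Thus $\bigcup_k F_k=E_\cC$.

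\emph{Step 3: the order.} With $E_\cC=\bigcup_i E_i$ given, define $\prec$ recursively. On each $E_1$-class, use the Borel linear order. Having ordered each $E_i$-class, order each $E_{i+1}$-class by listing its $E_i$-subclasses in the order of their Borel-least elements, with each subclass kept internally ordered and placed as a contiguous block. Each $E_i$ is finite with a Borel enumeration, by Theorem~\ref{thm:L-N-ordered}, so this is Borel at each stage. A comparison between two points stabilizes once they lie in a common $E_i$-class, and taking the union over $i$ gives a Borel order. Its comparable pairs are exactly the $E_\cC$-related ones, and by construction every $E_i$-class is an interval.
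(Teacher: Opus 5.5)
Your proposal is correct and follows essentially the paper's route: Boykin--Jackson recurrent cross-sections, nested relations obtained by attaching each lower-level class to a nearby new centre, recurrence for exhaustiveness, and the same block-lexicographic order built from $s_i(c)=\min[c]_{E_i}$. The paper uses the old centre as the representative via $\pi_{k+1}\colon\cC_k\to\cC_{k+1}$ with scales $5^k$, while you use the Borel-least element of each class with rapidly growing $s_k$.

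One sentence in the exhaustiveness step needs tightening. It is not enough that $c$ and $c'$ lie in the same Voronoi cell of $\cC^{(k)}$, because $F_k$ merges $F_{k-1}$-classes according to where their least elements fall. You need those least elements to lie within the separation radius of the nearby centre. This holds because the $F_{k-1}$-classes have diameter $O(s_{k-1})\ll s_k$, which is exactly where $s_k\gg s_{k-1}$ is used and is the analogue of the paper's property (i).
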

The proof we present follows the ideas from Boykin--Jackson~\cite{BJ} and
Marks--Unger~\cite[Lemma~A.2]{Marks-Unger}.  It uses existence of a sequence of uniformly separated
and relatively dense cross-sections with the Boykin--Jackson recurrence property, which was constructed in the
previous sections.

\begin{proof}
By the Boykin--Jackson theorem applied with $r_k=5^k$, there are
Borel cross-sections $\cC_k$ which are $r_k=5^k$-separated
and $2r_k = 2\cdot 5^k$-dense, such that for any fixed $\eps>0$ and any
$x\in X$, there are infinitely many indices $k$ for which
\begin{equation}
\label{eq:recurrence2}
| \rho(x, c_k)| \le \eps \, 5^k,
\end{equation}
for some $c_k\in\cC_k$.  We use the cross-sections $(\cC_k)_{k\ge 1}$ to group the elements
of $\cC$ in increasingly large but finite groups. These groups will
be the equivalence classes for the equivalence relations $E_k$, $k\ge 1$.

Fix a Borel linear order on $X$.

\smallskip We let $\pi_1\colon \cC\to \cC_1$ be a Borel map that assigns
to each $c\in\cC$ one of the closest to $c$ points from $\cC_1$ lying on the same orbit
(the closest point exists since $\cC_1$ is relatively dense and locally finite on each orbit).
To define $\pi_1$, consider the Borel set
\[
\{(c, c_1)\in \cC\times \cC_1 \colon c_1E_Tc, \, |\rho(c, c_1)|
= \min \{|\rho(c, c_1')|\colon c_1'\in\cC_1, \, c_1' E_T c\}\,
\}.
\]
Its sections over the first coordinate are finite and non-empty. So, the ordered
finite-section version of the Luzin--Novikov theorem (Theorem~\ref{thm:L-N-ordered})
uniformizes this set by the closest point (least in the fixed Borel order on $X$ at ties),
giving us a Borel $\pi_1$.

We define $E_1$ by declaring that its equivalence classes are the inverse images
\[
E_1(c_1)\overset{\rm def}{=}\pi^{-1}_1(c_1)
\]
indexed by $c_1\in\cC_1$. Note that $\cC_1$ may be entirely
disjoint from $\cC$, so despite appearances we do not ask that
$c_1$ is an element of the class $E_1(c_1)$.
By the relative density of $\cC_1$ and local finiteness of $\cC$, each equivalence class is finite.
Also note that whenever $|\rho(c, c')|<5$ for some $c'\in \cC_1$, we have
$c\in E_1(c')$. Moreover, each $c\in\cC$ lies in
$E_1(c_1)$ for some $c_1\in\cC_1$ with $|\rho(c, c_1)|\le 10$.

\smallskip
Fix $k\ge 1$, and assume that $E_{k}$ has been constructed
by assigning equivalence classes $E_{k}(c_k)$, indexed by $c_k\in\cC_{k}$.
Suppose further that the following properties hold:
\begin{enumerate}
\item[(i)] If $c \in\cC$ is such that $|\rho(c, c')|<2 \cdot 5^{k-1}$ for
some $c'\in\cC_{k}$, then $c \in E_{k}(c')$.

\item[(ii)] Every $c \in\cC$ lies in $E_{k}(c')$ for some
$c'\in\cC_{k}$ with $|\rho(c, c')|\le 3\cdot 5^{k}$.
\end{enumerate}

We proceed to construct $E_{k+1}$ with this induction hypothesis retained.
Just as above, using the ordered finite-section version of the Luzin--Novikov theorem,
we construct the Borel map $\pi_{k+1}\colon \cC_{k}\to\cC_{k+1}$
that associates to $c_k\in\cC_{k}$ the closest point
in $\{c_{k+1}\in\cC_{k+1}\colon c_{k+1}E_T c_k\}$. We define a Borel equivalence relation
$E_{k+1}$ by de\-clar\-ing that its equivalence classes are
\[
E_{k+1}(c_{k+1})\overset{\rm def}{=}\bigcup_{c'\in \pi_{k+1}^{-1}(c_{k+1})}E_{k}(c'),
\qquad c_{k+1}\in \cC_{k+1}.
\]
It is again evident that $E_{k+1}$ is Borel and has finite equivalence classes.

\smallskip\noindent {\sf Property (i) holds for $k\mapsto k+1$}. Fix $c\in\cC$
and suppose that for some $c'\in\cC_{k+1}$ we have
$| \rho(c, c') |<2 \cdot 5^{k}$.
The point $c$ belongs to $E_{k}(c'')$ for some $c''\in\cC_{k}$ such that
$|\rho(c, c'')| \le 3\cdot 5^{k}$. But then $|\rho(c', c'')| \le (2+3)\cdot 5^{k}=5^{k+1}$.
Since $\cC_{k+1}$ is $5^{k+1}$-separated, this implies
that $\pi_{k+1}(c'')=c'$. Thus the whole equivalence class $E_{k}(c'')$ is paired
with $c'\in \cC_{k+1}$, so $c\in E_{k+1}(c')$. It follows that
the induction hypothesis (i) holds with $k$ replaced by $k+1$.

\smallskip\noindent {\sf Property (ii) holds for $k\mapsto k+1$}.
By the induction hypothesis (ii), any $c\in \cC$ lies in $E_{k}(c')$ for some $c'\in\cC_{k}$ with
$|\rho(c, c')|\le 3\cdot 5^{k}$.
But $\cC_{k+1}$ is $2\cdot 5^{k+1}$-dense, so the closest point
$c''=\pi_{k+1}(c')$ to $c'$ satisfies $|\rho(c', c'')| \le 2\cdot 5^{k+1}$,
whence
\[
| \rho(c, c'') |\le 3\cdot 5^{k}+2\cdot 5^{k+1}< 3\cdot 5^{k+1}.
\]
This means that $c\in E_{k}(c')\subset E_{k+1}(c'')$, so also
the induction hypothesis (ii) holds with $k+1$ in place of $k$.

\smallskip This completes
the inductive process, which provides us with an increasing
sequence of equivalence relations $(E_k)$
with properties (i) and (ii) guaranteed for all $k\ge 1$.

\smallskip
It only remains to verify that $E_{\cC}=\bigcup_{k\ge 1}E_k$.
But this follows from item (i) combined with the recurrence property
\eqref{eq:recurrence2}. Indeed, fix any two points $c, c'\in\cC$ with
$c\, E_\cC\, c'$, and fix $\eps$ with $0<\eps<2/5$.
By the recurrence \eqref{eq:recurrence2},
there are infinitely many indices $i$ for which there exists $c_i\in\cC_i$
with $| \rho(c, c_i)| \le \eps 5^i < 2\cdot 5^{i-1}$. By item (i) above, this implies that
$c\in E_{i}(c_i)$ for these indices. But $| \rho(c, c')|$
is a fixed finite number, so for $i$ sufficiently large, we have
\[
| \rho(c', c_{i}) | \le |\rho(c, c')| + |\rho(c, c_i)| \le |\rho(c, c')| +\eps 5^{i} < 2\cdot 5^{i-1}.
\]
Hence, again by item (i), we have $c'\in E_{i}(c_{i})$ for the same indices
(starting from some $i_0$).
But then $c\, E_{i}\, c'$ for some $i$, which completes the proof.

\medskip It remains to prove the existence of a Borel partial order $\prec$ on $\cC$ whose restriction to
each $E_\cC$-class is linear. The partial order on $\cC$, which we will construct, will be {\em strict} (i.e., irreflexive).

Fix any global strong Borel linear order $<$ on $\cC$, and define an order $\prec_i$ on each $E_i$ inductively. For $i=1$, set
\[
x \prec_1 y \ \Longleftrightarrow \ x\, E_1\, y, \ x<y.
\]

Assume $\prec_i$ has been defined. For $c\in\cC$, set $s_i(c)  = \displaystyle \min [\, c\, ]_{E_i}$,
where $ [\, c\, ]_{E_i}$ is the $E_i$-equivalence class of $c$, and the minimum is taken with respect
to the linear order $<$\,. Since the $E_i$ classes are finite,
by the Luzin--Novikov theorem, $s_i\colon \cC\to\cC$ is a Borel map. Now, define $\prec_{i+1}$ on
$E_{i+1}$-classes, by
\[
x \prec_{i+1} y \quad \Longleftrightarrow \quad
x\, E_{i+1}\, y \ \ \text{and\ } \bigl(\, (s_i(x)<s_i(y)) \ \text{or}\  (s_i(x)=s_i(y)
\ \text{and\ }x\prec_i y)\, \bigr).
\]
This relation is Borel and $\prec_{i+1}$ extends $\prec_i$.

Finally, for $x, y \in\cC$, set
\[
x \prec y \quad  \Longleftrightarrow \quad \exists i\ x\prec_i y.
\]
Since $\prec_i$'s are compatible, this is a well-defined strict Borel partial order on $\cC$.
If $x\, E_{\cC}\, y$ and $x\ne y$, then $x\, E_i\, y$ for some $i$, so $x$ and $y$ are
$\prec$-comparable. If $x\prec y$ and $y\prec z$, choose $i$ large enough so that $x \prec_i y$
and $y \prec_i z$, then
$x\prec_i z$, and therefore $x\prec z$. Hence, $\prec$ is linear on each $E_{\cC}$-class.  \end{proof}

\subsection{Existence of a rational grid}
\noindent Here, we prove Lemma 2.3 from~\cite{Slutsky2017}.
\begin{definition*}[rational grid]
A Borel subset $\Sigma\subset X$ is called a rational grid if
\begin{enumerate}
\item $\Sigma$ is invariant under $\bQ^d$.
\item $\Sigma$ intersects every $\bR^d$-orbit.
\item For every $\sigma\in\Sigma$, $\Sigma\cap [\sigma]_{\bR^d} = [\sigma]_{\bQ^d}$.
\end{enumerate}
\end{definition*}
\noindent ($[\sigma]_{\bR^d}$ and $[\sigma]_{\bQ^d}$ denote the $\bR^d$- and $\bQ^d$-orbits of $\sigma$).
Note that equivalently, the last property says that, for every
$\sigma, \sigma'\in\Sigma$ lying on the same $\bR^d$-orbit, $\rho(\sigma, \sigma')\in \bQ^d$.
\begin{lemma}
\label{lemma:rational-grid}
Any free Borel flow $\bR^d\curvearrowright X$ admits a rational grid.
\end{lemma}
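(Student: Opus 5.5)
The idea is to build $\Sigma$ as the $\bQ^d$-saturation of a Borel set that picks, on each orbit, a single point in a canonical way. Such a set does not exist in general, since the flow need not be smooth. Instead I will use hyperfiniteness, Lemma~\ref{thm:hyperfinite}, to choose points in finite blocks that grow, and then fix the rational mismatches between consecutive blocks by rational shifts.

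Fix a uniformly separated Borel cross-section $\cC$ and write $E_\cC=\bigcup_i E_i$ with $E_i$ finite Borel equivalence relations, as in Lemma~\ref{thm:hyperfinite}. Let $\prec$ be the Borel partial order given there. For each $c\in\cC$ let $s_i(c)$ be the $\prec$-least element of $[c]_{E_i}$; each $s_i$ is Borel by Luzin--Novikov. The naive choice $\Sigma=T_{\bQ^d}\{s_i(c)\}$ fails because the relative positions $\rho(s_i(c),s_{i+1}(c))$ are real, not rational. So I will build inductively Borel maps $\sigma_i\colon\cC\to X$ with the following properties:
\begin{itemize}
\item[(1)] $\sigma_i$ is constant on $E_i$-classes;
\item[(2)] $\sigma_i(c)$ lies on the orbit of $c$;
\item[(3)] $\rho(\sigma_i(c),\sigma_{i+1}(c))\in\bQ^d$;
\item[(4)] $|\rho(c,\sigma_i(c))|$ is bounded by some Borel function that does not depend on $i$ beyond what is needed.
\end{itemize}

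Start with $\sigma_1(c)=s_1(c)$. For the step $i\to i+1$, fix an $E_{i+1}$-class $A$. It is a finite union of $E_i$-classes $A_1\prec\dots\prec A_k$, and these carry the points $\sigma_i(A_j)$. Set $\sigma_{i+1}\equiv\sigma_i(A_1)$ on all of $A$. Then the other blocks $A_j$ get a new base point whose offset from the old one, $\rho(\sigma_i(A_j),\sigma_i(A_1))$, is in general irrational. That breaks (3).

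This is the main obstacle. A block $A_j$ that has already been matched to its base point must not be moved again by a real amount. I will get around it by not moving points at all: I only require rationality along the tower of base points of the \emph{least} block, and define
\[
\Sigma=\bigcup_i\bigcup_{c\in\cC} T_{\bQ^d}\,\sigma_i(c)\quad\text{with } \sigma_{i+1}(c)=\sigma_i(\min_\prec[c]_{E_{i+1}}),
\]
after first replacing each $\sigma_{i+1}$ by a rational perturbation $T_{q}\sigma_i(\cdot)$ that stays rationally related to the previous one. The difficulty is that different $c$ on the same orbit give different towers. Two such towers merge at some level $i$ (because $E_\cC=\bigcup E_i$), but the earlier points of the discarded tower are not rationally related to the surviving one. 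So I must drop them. I therefore define
\[
\Sigma=\{x: \exists c\ \exists i_0\ \forall i\ge i_0\ \ \rho(x,\sigma_i(c))\in\bQ^d\}.
\]
Here a point is kept exactly when it is eventually rationally related to the eventual common tower, and by (3) this is the same as being rationally related to $\sigma_i(c)$ for all large $i$.

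Borelness of $\Sigma$ follows by writing it as $\bigcup_{q\in\bQ^d}\bigcup_{i_0}\bigcap_{i\ge i_0}$ of Borel conditions; the quantifier over $c$ is Luzin--Novikov projection with countable sections. $\bQ^d$-invariance is immediate. $\Sigma$ meets every orbit because it contains $\sigma_i(c)$ for all large $i$. Property~3 of a rational grid holds because any two $c,c'$ on an orbit are eventually $E_i$-equivalent, so $\sigma_i(c)=\sigma_i(c')$ for large $i$, and rational relations are transitive. What I expect to need checking is that $\sigma_i$ can be chosen Borel and constant on classes, which is routine via Luzin--Novikov. The key conceptual point is that the limit tower is common to the whole orbit even though no single canonical point exists.
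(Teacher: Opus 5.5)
Your proposal has a genuine gap, located exactly at the obstacle you identify; the work-around does not remove it.

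\textbf{Why the inductive scheme cannot work.} Requirements (1) and (3) cannot both be met by a construction that fixes $\sigma_i$ once and for all. Let an $E_{i+1}$-class be $A=A_1\sqcup\dots\sqcup A_k$, with each $A_j$ an $E_i$-class. Then (1) forces $\sigma_{i+1}\equiv y$ on $A$, and (3) forces $\rho(\sigma_i(A_j),y)\in\bQ^d$ for every $j$. So the already chosen points $\sigma_i(A_1),\dots,\sigma_i(A_k)$ must be pairwise rationally related. That is the statement you are trying to prove, and it is false in general for points of $\cC$. Conversely, if (1) and (3) did hold, nothing would need to be ``dropped'': $\sigma_1(c)$, $\sigma_N(c)=\sigma_N(c')$ and $\sigma_1(c')$ would all be rationally related, so $T_{\bQ^d}\sigma_1(\cC)$ would already be a grid.

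\textbf{Each of your two fixes destroys a property you use at the end.}
\begin{itemize}
\item Without perturbation, $\sigma_{i+1}(c)=\sigma_i(\min_\prec[c]_{E_{i+1}})$ simply gives $\sigma_i=s_i$. Then (3) fails. Your $\Sigma$ misses every orbit on which $\prec$ has no least element (so $s_i(c)$ takes infinitely many values) and on which no two distinct points of $\cC$ differ by a vector of $\bQ^d$.
\item With the perturbation $\sigma_{i+1}(c)=T_q\sigma_i(c)$, $q\in\bQ^d$, property (3) holds but (1) fails. Each tower stays forever in the $\bQ^d$-coset of $s_1(c)$, so towers started in different $E_1$-classes never merge. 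Then $\Sigma=T_{\bQ^d}s_1(\cC)$ meets a typical orbit in several $\bQ^d$-orbits, and property 3 of a grid fails.
\end{itemize}
Thus ``$\Sigma$ meets every orbit'' and ``property 3 holds'' cannot both be justified.

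\textbf{The missing idea.} You must move the points themselves and revise earlier choices, by a convergent sequence of small rigid displacements. The paper does this as follows.
\begin{itemize}
\item Start from a $10$-separated $\cC_0$ and build $\cC_{n+1}=\{T_{h_n(c)}c\colon c\in\cC_n\}$ with $|h_n|<2^{-n-1}$.
\item Take $h_n$ constant on the (transported) $E_{n-1}$-classes. Each block aligned at an earlier stage then moves rigidly and keeps its internal rational relations.
\item Choose $h_n=\alpha(\rho(s_n,t_n))-\rho(s_n,t_n)$, where $t_n$ and $s_n$ are the $\prec$-least points of the $E_{n-1}$- and $E_n$-classes and $\alpha$ is a rational approximation. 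Then $\rho(s_n,\varphi_n(t_n))=\alpha(\rho(s_n,t_n))\in\bQ^d$, so every $E_{n-1}$-block in an $E_n$-class is moved to a rational position relative to the original point $s_n$.
\item Summability of the $h_n$ gives a limiting cross-section, injective by $10$-separation and Borel by Luzin--Suslin.
\item Two of its points on one orbit are rationally related: once their preimages become $E_N$-equivalent, they are aligned at stage $N$ and move together afterwards.
\end{itemize}
Your strategy of ``not moving points at all'' gives up precisely this freedom to move whole old blocks by tiny amounts.
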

The proof goes as follows. We start with a $10$-separated Borel cross-section $\cC_0$ and
construct inductively a convergent sequence of small perturbations
\[
\cC_0 \xrightarrow{\varphi_0} \cC_1 \xrightarrow{\varphi_1}\cC_2 \xrightarrow{\varphi_2} \ldots \,,
\]
where $\varphi_i$ are Borel injective mappings, which move points along their orbits (hence,
each $\cC_i$ remains a Borel cross-section).
The induction will be tuned so that, for the limiting cross-section $\cC$, its $\bQ^d$-orbit
$\Sigma = [\cC]_{\bQ^d}$ becomes a rational grid.

\begin{proof}
Fix a $10$-separated Borel cross-section $\cC_0$, and let $E_{\cC_0}$
be its orbit equivalence relation. Lemma~\ref{thm:hyperfinite} gives us
an increasing sequence of finite equivalence relations $E_n$ such that
$E_{\cC_0}=\bigcup_n E_n$, and a Borel strong partial order $\prec$
on $\cC_0$ for which $E_n$-equivalence classes are intervals.

\smallskip
By a \emph{spiral of cross-sections} we mean a sequence $\cC_n$ of Borel cross-sections,
together with Borel maps $h_n\colon \cC_n\to B(2^{-n-1})\subset \bR^d$ such that
\[
\cC_{n+1}
\overset{\rm def}{=}\{ T_{h_n(c_n)}\,c_n\colon c_n\in\cC_{n}\}.
\]
We let \[ \varphi_{n}\colon \cC_n\ni c_n\mapsto T_{h_n(c_n)}\, c_n\in \cC_{n+1}. \]
For indices $0\le k< n$, we put
\[
\varphi_{k, n}=\varphi_{n-1}\circ\ldots\circ\varphi_{k}, \quad \varphi_{k, n}\colon \cC_k\to\cC_n,
\]
and let $\varphi_{n,n}={\rm id}_{\cC_n}$.
If $(\cC_n,h_n)_n$ is a spiral, then for $c_0\in\cC_0$,
\[
\rho(c_0, \varphi_{0,n}(c_0))=\sum_{j=0}^{n-1} h_j(\varphi_{0,j}(c_0)).
\]
Since the range of  $h_n$ is contained in $B(2^{-n-1})$,
the spiral  $(\cC_n,h_n)_n$ {\em converges}, that is, for each $c_0\in\cC_0$, the above sum
converges as $n\to\infty$. We define the limiting maps
$H\colon \cC_0\to B(1)\subset \bR^d$ by
\[
H(c_0)=\sum_{j\ge 0} h_j(\varphi_{0,j}(c_0)).
\]
Since $h_j$ and $\varphi_{0,j}$ are Borel, so is $H$.
The limiting cross-section of a convergent spiral
$(\cC_n,h_n)_n$ is defined as
\[
\cC
\overset{{\rm def}}{=} \{T_{H(c_0)}\, c_0\colon c_0\in\cC_0\}.
\]
The limiting map $ \varphi\colon c_0 \mapsto T_{H(c_0)} c_0$ is Borel.
Since we chose $\cC_0$ to be $10$-separated, while the length of the limiting
perturbation cannot exceed $1$, $\varphi$ is injective. Hence, by the Luzin--Suslin theorem,
$\cC$ is Borel. Since $\varphi$ only moves the points along their orbits, $\cC$ is a cross-section.

\smallskip
We denote by $E_k^n$ the equivalence relation $E_k$ pushed forward from $\cC_0$ to
$\cC_n$ by $\varphi_{0, n}$; that is,
\[
c_n\, E_k^n \,c_n'\Longleftrightarrow
\varphi^{-1}_{0,n}(c_n)\, E_k\, \varphi^{-1}_{0,n}(c_n').
\]
We adopt the convention that $E_{-1}^0$ is the trivial equivalence relation, i.e.,
its equivalence classes are singletons.

\smallskip
In what follows, we will inductively  construct a spiral $(\cC_n, h_n)_{n\ge 0}$
with the following properties:
\begin{enumerate}
\item[(1)] $h_n$ is constant on $E_{n-1}^{n}$-equivalence classes; i.e.,
$ c_n\, E_{n-1}^n\, c_n' \Longrightarrow h_n(c_n)=h_n(c_n')$.
\item[(2)] Each $E_{n}^{n+1}$-class is ``on a rational grid''; i.e.,
$ c_{n+1}\, E_{n}^{n+1}\, c_{n+1}' \Longrightarrow \rho(c_{n+1}, c_{n+1}')\in\bQ^d$.
\end{enumerate}
For $\eps>0$, we let $\alpha_\eps$ be a Borel map
$\alpha_\eps: \bR^d\to\bQ^d$, such that $|\alpha_\eps(x)-x| < \eps$.

\medskip\noindent {\sf Base step.}
In the first generation we denote by $s_0\colon \cC_0\to\cC_0$
the Borel selector, which associates to each $c$ the minimal element
in its $E_0$-class with respect to the order $\prec$.
We set
\[
h_0(c_0)=\alpha_{1/2}(\rho(s_0(c_0), c_0)) - \rho(s_0(c_0), c_0),
\quad
\varphi_0 (c_0) = T_{h_0(c_0)}\, c_0,
\quad
\cC_1 = \varphi_0(\cC_0).
\]
Since $E_{-1}^0$ is trivial, there is nothing to respect for property (1).
To check (2), note that, for $c_0\, E_0\, c_0'$, we have $s_0(c_0)=s_0(c_0')$,
which we denote by $s_0$. Then,
\begin{align*}
\rho (\varphi_0(c_0), \varphi_0(c_0')) &= \rho (\varphi_0(c_0), c_0) +
\rho (c_0, c_0') + \rho(c_0', \varphi_0(c_0')) \\
&= - h_0(c_0) + \rho (c_0, c_0') + h_0(c_0') \\
&= -\alpha_{1/2}(\rho(s_0, c_0)) + \rho(s_0, c_0) + \rho (c_0, c_0')
+ \alpha_{1/2}(\rho(s_0, c_0')) + \rho(c_0', s_0) \\
&= -\alpha_{1/2}(\rho(s_0, c_0)) + \alpha_{1/2}(\rho(s_0, c_0')).
\end{align*}
The RHS belongs to $\bQ^d$, so (2) also holds.

\medskip\noindent
{\sf Inductive step}.
Suppose now that $(\cC_m)_{m\le n}$ and $(h_m)_{m\le {n-1}}$ have been constructed
so that properties (1) and (2) hold for all indices $0\le m<n$.
We proceed to construct $h_n\colon \cC_{n}\to B(2^{-n-1})$ as follows.
We let $s_n\colon \cC_n\to\cC_n$ be the Borel selector which picks for each $c_n\in\cC_n$
the $\prec$-minimal element\footnote{
We transport the order $\prec$ from $\cC_0$ to $\cC_n$ by $\varphi_{0, n}$, letting
\[
c_n \prec c_{n}' \Longleftrightarrow
\varphi_{0, n}^{-1} (c_n) \prec \varphi_{0, n}^{-1} (c_n').
\]
}
of its $E_{n}^n$-class, and $t_n\colon \cC_n\to\cC_n$ be
the Borel selector which picks for each $c_n\in\cC_n$ the $\prec$-minimal element of its
$E_{n-1}^n$-class (note that on the base step, the $E_{-1}^0$ equivalence classes were
singleton, so we used $t_0=\text{id}_{\cC_0}$). We put
\begin{align*}
h_n(c_n)=\alpha_{2^{-n-1}}\big(\rho(s_{n}(c_n), t_{n}(c_n))\big) &- \rho(s_{n}(c_n),t_{n}(c_n)), \\
\varphi_n (c_{n}) &= T_{h_n(c_n)}\, c_n, \quad \cC_{n+1} = \varphi_n(\cC_n).
\end{align*}
Since $t_n$ is constant on $E_{n-1}^n$-classes, and $s_n$ is constant on $E_n^n$-classes
and $E_n$ increase with $n$, $h_n$ is constant on $E_{n-1}^n$-classes. That is,
property (1) holds.

To check (2), suppose that
$c_{n+1}=\varphi_n(c_n)$, $c_{n+1}'=\varphi_n(c_n')$, where $c_n$ and $c_n'$ are
in the same $E_n^n$-equivalence class. Then
\begin{multline*}
\label{eq:rho-complicated}
\rho(c_{n+1}, c_{n+1}') =\rho (\varphi_{n}(c_n), \varphi_{n}(c_n') ) \\
=\rho (\varphi_n(c_n), \varphi_n(t_n(c_n)) )
+ \rho (\varphi_n(t_n(c_n)), \varphi_n(t_n(c_n')))
+\rho (\varphi_n(t_n(c_n')), \varphi_n(c_n')).
\end{multline*}
We claim that each of the three terms on the right-hand side is rational.
First, the points $c_n$ and $t_n(c_n)$ are in the same $E_{n-1}^n$-equivalence class,
so $\rho (c_n, t_n(c_n)) \in \bQ^d$ by the induction assumption (2). Since $\varphi_n$
acts by the same translation on these two points, we see that
 \[ \rho (\varphi_n(c_n), \varphi_n(t_n(c_n)) ) = \rho (c_n, t_n(c_n)) \in \bQ^d. \]
For the same reason, $ \rho (\varphi_n(t_n(c_n')), \varphi_n(c_n')) \in\bQ^d$.
At last,
\begin{align}\label{eq:rho-long}
\nonumber \rho (\varphi_n(t_n(c_n)), \varphi_n(t_n(c_n'))) &=
\rho (\varphi_n (t_n(c_n)), t_n(c_n)) + \rho (t_n(c_n), t_n(c_n'))
+ \rho(t_n(c_n'), \varphi_n (t_n(c_n'))) \\
&= -h_n(t_n(c_n)) + \rho (t_n(c_n), t_n(c_n')) + h_n(t_n(c_n')) .
\end{align}
Noting that, since $c_n$ and $c_n'$ are in the same $E_n^n$-class,
$t_n(c_n)$ and $t_n(c_n')$ are also in the same class, and
\[ s_n(t_n(c_n))=s_n(c_n)=s_n(t_n(c_n'))=s_n(c_n')=s_n, \]
we see that
\begin{align*}
\text{RHS\ of\ }\eqref{eq:rho-long} &=
-\alpha_{2^{-n-1}}(\rho (s_n, t_n(c_n))) + \rho (s_n, t_n(c_n))
+ \rho (t_n(c_n), t_n(c_n'))  \\
&\qquad + \alpha_{2^{-n-1}}(\rho (s_n, t_n(c_n'))) - \rho (s_n, t_n(c_n')) \\
&= -\alpha_{2^{-n-1}}(\rho (s_n, t_n(c_n)))  + \alpha_{2^{-n-1}}(\rho (s_n, t_n(c_n'))),
\end{align*}
and also lies in $\bQ^d$. Thus, property (2) also holds.
This completes the induction.

\medskip
Now, let $\cC = \{T_{H(c_0)}\, c_0\colon c_0\in\cC_0\}$ be the limiting Borel cross-section.
Define $\Sigma = [\cC]_{\bQ^d}$. It is a $\bQ^d$-invariant Borel set, which intersects every
$\bR^d$-orbit. It remains to check that all points of $\cC$ lying on the same $\bR^d$-orbit
are rational translates of each other.
Indeed, let $c, c'\in\cC$ lie on the same orbit. Then
\[
c= T_{H(c_0)}\, c_0 = \lim_n \varphi_{0, n}(c_0), \quad
c'=T_{H(c_0')}\, c_0'   = \lim_n \varphi_{0, n}(c_0'),
\]
with $c_0, c_0'\in\cC_0$ also lying
on the same orbit. Then $c_0$ and $c_0'$ are $E_N$-equivalent for some $N$. Then,
at stage $N$ Property (2) gives that
\[ \rho (\varphi_{0, N+1}(c_0), \varphi_{0, N+1}(c_0'))\in\bQ^d. \]
By Property (1), for all later stages, the perturbations are constant on the relevant older class,
so this rationality is preserved. Hence, in the limit, $\rho(c, c')\in\bQ^d$, completing the proof
that $\Sigma$ is a rational grid.
\end{proof}

\subsection{Rational cross-sections}
The last step towards Theorem~\ref{thm:cross-sections}
is the following lemma:
\begin{lemma}
\label{lemma:rational}
The sequence $(\cC_i)_{i\ge 1}$ of uniformly separated and relatively dense Borel
cross-sections with the recurrence property from Lemma~\ref{thm:Boykin-Jackson}
can be taken to be rational, i.e., satisfying property 3 of Theorem~\ref{thm:cross-sections}:
for every $m$ and $n$ and $c_m\in\cC_m$ and $c_n\in\cC_n$ lying on the same orbit,
$\rho(c_m, c_n)\in\bQ^d$.
\end{lemma}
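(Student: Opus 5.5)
The plan is to snap every cross-section onto a single rational grid by a small displacement along orbits. Since each $\cC_i$ is separated on the scale $r_i\ge 1$, a move of size less than $1$ hardly affects separation, density or recurrence, and we can simply rescale the constants.

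First, fix a rational grid $\Sigma$ from Lemma~\ref{lemma:rational-grid}. Start from the sequence $(\cD_i)_i$ given by Lemma~\ref{thm:Boykin-Jackson}, applied to a slightly adjusted sequence of radii. Concretely, use radii $r_i'=r_i+2$: then $\cD_i$ is $(r_i+2)$-separated and $2(r_i+2)$-dense. For the final density requirement we also need some slack; replacing $r_i'$ by, e.g., $\tfrac12(r_i+1)\cdot$ a suitable constant causes trouble, so instead we accept $2r_i+c$ density and fix it by choosing the $\cD_i$ from Lemma~\ref{thm:Boykin-Jackson} with density parameter strictly smaller. To do this, rerun the Kechris--Solecki--Todorcevic step with the graph $0<|\rho|\le 2r-2$. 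The argument of Section~\ref{subsect:rel-dense} works verbatim, giving $r$-separated, $(2r-2)$-dense sets. The Boykin--Jackson shift preserves both properties.

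Next, for each $d\in\cD_i$, choose a grid point near it. The set
\[
\{(d,\sigma)\in\cD_i\times\Sigma\colon \sigma E_T d,\ |\rho(d,\sigma)|<1/2\}
\]
is Borel with countable nonempty sections, since $\Sigma$ meets each orbit in a dense $\bQ^d$-orbit. By the Luzin--Novikov theorem it has a Borel uniformization $g_i\colon\cD_i\to\Sigma$. Set $\cC_i=g_i(\cD_i)$. Because $\cD_i$ is at least $1$-separated and the moves are less than $1/2$, the map $g_i$ is injective. Hence, by Luzin--Suslin, $\cC_i$ is Borel and is a cross-section.

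Now check the properties. Separation drops by at most $1$ and density grows by at most $1$, so the parameters can be arranged to give exactly $r_i$-separation and $2r_i$-density. For recurrence, $|\rho(x,g_i(d))|\le|\rho(x,d)|+1/2$, and since $r_i\to\infty$ the additive $1/2$ is absorbed: from $\eps r_i/2$-closeness for infinitely many $i$ we get $\eps r_i$-closeness. Rationality is immediate, because all points of all $\cC_i$ lie in $\Sigma$, and property 3 of the rational grid gives $\rho(c_m,c_n)\in\bQ^d$ whenever the points share an orbit.

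The main obstacle is the bookkeeping of constants, namely making the perturbed sets meet the exact $r_i$ / $2r_i$ requirements. This is handled by building the slack in advance as above.
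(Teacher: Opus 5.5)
Your snapping step matches the paper's first step: uniformize $\{(d,\sigma)\colon \sigma E_T d,\ |\rho(d,\sigma)|<1/2\}$ by Luzin--Novikov, get injectivity from separation and Borelness from Luzin--Suslin, get rationality from property 3 of $\Sigma$, and absorb the additive error in the recurrence since $r_i\to\infty$. The gap is in the bookkeeping of constants. You correctly flag this as the main obstacle, but you do not resolve it.

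The claim that rerunning the hole-filling step with the graph $0<|\rho|\le 2r-2$ gives $r$-separated, $(2r-2)$-dense sets is false. Independence in that graph only forbids distances $\le 2r-2$. The added points may therefore lie at distance in $(2r-2,2r]$ from each other or from $\cC$, and you only get $(r-1)$-separation. In general, a maximal independent set for threshold $t$ only guarantees $t/2$-separation and $t$-density. So the greedy argument always produces an $s$-separated, $2s$-dense set, and the factor $2$ is built in.

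If you start from such a set and move each point by less than $\delta$, you can only guarantee $(s-\delta)$-separation and $(2s+\delta)$-density. The requirements $s-\delta\ge r_i$ and $2s+\delta\le 2r_i$ are incompatible for $\delta>0$. So no advance choice of parameters yields exactly $r_i$-separated and $2r_i$-dense sets by pure perturbation. Even granting your claim, $r=r_i+2$ gives density $2r_i+5/2$, not $2r_i$.

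The missing idea, which is how the paper proceeds, is to repair density after snapping rather than before:
\begin{itemize}
\item Take $\cD_i$ from Lemma~\ref{thm:Boykin-Jackson} with radii $r_i+1$, so that $\cC_i=g_i(\cD_i)$ is still $r_i$-separated, and accept its slightly worse density.
\item Rerun the construction of Section~\ref{subsect:rel-dense} with the graph $0<|\rho|\le 2r_i$ on $Y=T_{\bQ^d}\cC_i$.
\item Since $\cC_i\subset\Sigma$ meets every orbit and $\Sigma$ meets each orbit in a single $\bQ^d$-orbit, $Y=\Sigma$. The maximal independent extension therefore stays inside $\Sigma$, so rationality is kept.
\item The extension is $r_i$-separated and $2r_i$-dense.
\item Recurrence survives, because enlarging a cross-section can only decrease $\inf_c|\rho(x,c)|$.
\end{itemize}
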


\begin{proof}
Having at hand the rational grid $\Sigma$ from Lemma~\ref{lemma:rational-grid}, we start with the
sequence $(\cC_i')_{i\ge 1}$ from Lemma~\ref{thm:Boykin-Jackson}
with parameters $r'_i=r_i+1$, and proceed to select
cross-sections $\cC_i\subset \Sigma$ and Borel
bijections $\xi_i\colon \cC_i'\to\cC_i$
such that for any $i\ge 1$ and $c_i\in\cC_i'$
\begin{enumerate}
\item $\xi_i(c_i)$ lies on the same orbit as $c_i$,
\item $|\rho(c_i, \xi_i(c_i))|<1$.
\end{enumerate}
This step needs a short explanation. For each $i$ consider the relation
\[
R_i = \{(c, \sigma)\in\cC_i' \times \Sigma\colon \sigma\in [c]_{\bR^d}, \;
|\rho(c, \sigma)| <1\}.
\]
It can be represented in the form
$ R_i = (\cC_i' \times \Sigma) \cap E_T \cap \rho^{-1}(B(1)) $,
and hence, is Borel. Since $\Sigma\cap [c]_{\bR^d} =  [\sigma]_{\bQ^d}$,
it is dense and countable. Hence, each vertical section of $R_i$ is non-empty and
countable. Thus, by the Luzin--Novikov theorem, we get a uniformizing Borel map
$\xi_i\colon \cC_i'\to \Sigma$ such that $(c, \xi_i(c))\in R_i$ for all $c\in \cC_i'$.
Then $\xi_i$ is injective\footnote{
Since $\cC_i'$ is $(r_i+1)$-separated, for $c\ne c'\in \cC_i'$, the
$1$-neighbourhoods of these points are disjoint, whence, $\xi_i(c)\ne \xi_i(c')$.},
so, by the Luzin--Suslin theorem, $\cC_i=\xi_i(\cC_i')$ is Borel.

\smallskip
A minor nuisance is that the cross-sections $\cC_i = \xi_i(\cC_i')\subset \Sigma$
are now $r_i$-separated, recurrent, but only $(2r_i+3)$-dense. So we need to repeat the
``filling in holes'' construction from Section~\ref{subsect:rel-dense}, preserving the
rationality (enlarging a cross-section preserves recurrence).
By construction in Section~\ref{subsect:rel-dense},
the new cross-sections $\cD_i$ are chosen inside the same rational grid $\Sigma$,
which coincides with the rational saturation $[\cC_i]_{\bQ^d}$ of $\cC_i$.
Thus, rationality of $(\cD_i)_i$ is preserved. \end{proof}

\section{Proof of Lemma~\ref{lemma:p_n} }\label{AppB}
We will be using the following  version of the Luzin--Novikov theorem.
\begin{theorem}[ordered finite-section form of the Luzin--Novikov theorem]\label{thm:L-N-ordered}
Let $X$ and $Y$ be standard Borel spaces, let $<$ be a Borel linear order on $Y$, and let
$B\subseteq X\times Y$ be Borel with finite sections
\[
B_x=\{y\in Y\colon (x,y)\in B\}.
\]
Then the counting function
\[
N_B\colon X\to\bN \cup \{0\},\qquad N_B(x)=|B_x|,
\]
is Borel.

Moreover, for every $j\ge1$, there is a Borel map
\[
e_j\colon \{x\in X\colon N_B(x)\ge j\}\to Y
\]
such that, whenever $N_B(x)=p$, the section $B_x$ is listed in increasing order as
\[
B_x=\{e_1(x)<e_2(x)<\ldots<e_p(x)\}.
\]
In particular, for each $p$, the map
\[
x\mapsto (e_1(x),\ldots,e_p(x))
\]
is Borel on the Borel set $\{x\colon N_B(x)=p\}$.
\end{theorem}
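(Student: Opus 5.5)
We will derive the ordered finite-section form from the general Luzin--Novikov uniformization, applied to $B$ and its Borel restrictions. Since every section $B_x$ is finite, and hence countable, the Remark following the Luzin--Novikov theorem provides Borel maps $g_k\colon \operatorname{proj}_X(B)\to Y$, $k\in\bN$, with $B=\{(x,g_k(x))\colon x\in\operatorname{proj}_X(B),\ k\in\bN\}$. The projection $\operatorname{proj}_X(B)=\{N_B\ge 1\}$ is Borel. Everything can then be expressed by countable Boolean operations on conditions of the form $g_k(x)=g_l(x)$ and $g_k(x)<g_l(x)$. Each such condition defines a Borel subset of $X$, because it is the preimage under the Borel map $x\mapsto (g_k(x),g_l(x))$ of the diagonal or of the order relation, which are Borel subsets of $Y\times Y$. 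The diagonal is Borel because $Y$ is standard, hence countably separated.

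First we show that $N_B$ is Borel. For each $p\ge 1$,
\[
\{x\colon N_B(x)\ge p\}=\bigcup_{k_1,\ldots,k_p}\ \bigcap_{i\ne j}\{x\colon g_{k_i}(x)\ne g_{k_j}(x)\},
\]
which is a countable union of Borel sets. Hence each level set $\{N_B=p\}$ is Borel, and so is $N_B$.

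Next we construct the ordered enumeration. Rather than building $e_j$ recursively by removing the minimum, we define it directly. Fix $j$, and on the Borel set $\{N_B\ge j\}$ set $e_j(x)=g_k(x)$, where $k$ is the least index such that $g_k(x)$ has exactly $j-1$ elements of $B_x$ below it. This condition says that there exist indices $l_1,\ldots,l_{j-1}$ with pairwise distinct values $g_{l_i}(x)<g_k(x)$, and that every $l$ with $g_l(x)<g_k(x)$ has $g_l(x)$ among these values. It is a countable union over tuples of countable intersections of Borel conditions, so the set $S_{j,k}$ of $x$ satisfying it for index $k$ is Borel.

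Then $e_j(x)=g_k(x)$ on $S_{j,k}\setminus\bigcup_{k'<k}S_{j,k'}$. This is a countable Borel partition of $\{N_B\ge j\}$, on each piece of which $e_j$ agrees with a Borel map, so $e_j$ is Borel. Because $B_x$ is finite and linearly ordered by $<$, its $j$-th smallest element exists whenever $N_B(x)\ge j$, and it is enumerated by some $g_k$. Therefore $S_{j,\cdot}$ covers $\{N_B\ge j\}$, and $e_j(x)$ is exactly the $j$-th element in increasing order. The final assertion follows since a finite tuple of Borel maps is Borel into the product.

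The only point requiring care is the Borelness of comparison sets such as $\{x\colon g_k(x)<g_l(x)\}$. This is where the hypotheses that $<$ is Borel and that $Y$ is standard are used. Everything else is countable bookkeeping.
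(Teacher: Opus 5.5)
Your proof is correct, but it takes a different route from the paper's.

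The paper uses the \emph{disjoint} decomposition $B=\bigsqcup_r B_r$ into Borel graphs of maps $h_r\colon X_r\to Y$. This makes the counting a one-liner: $N_B(x)$ is the number of $r$ with $x\in X_r$, a sum of indicator functions. For the ordering, the paper shows that the set of non-minimal pairs in $B$ is Borel, via a second application of Luzin--Novikov to a projection with countable sections. So the graph $G_1$ of the least-element map is Borel, and Borelness of $e_1$ follows from the fact that a function with Borel graph is Borel. The maps $e_{j+1}$ are then built recursively by discarding $e_1(x),\ldots,e_j(x)$ and repeating the least-element argument.

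You use only the enumeration with repetitions $g_k$ from the Remark, and reduce everything to countable Boolean combinations of the sets $\{g_k=g_l\}$ and $\{g_k<g_l\}$. You define $e_j$ non-recursively by the rank condition ``exactly $j-1$ elements of $B_x$ lie below $g_k(x)$''. Borelness then comes for free, since $e_j$ agrees with some Borel $g_k$ on each piece of a countable Borel partition of $\{N_B\ge j\}$.

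Your approach never needs the Borel-graph theorem or a further projection argument. It also makes transparent that only Borelness of $<$ and of the diagonal in $Y\times Y$ is used. The paper's version gets a slicker counting step from the disjointness of the graphs, and a compact recursive scheme, at the price of invoking those extra descriptive-set-theoretic facts.
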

\begin{proof}
By the Luzin--Novikov theorem, we write $B=\bigsqcup_{r\ge1} B_r$, where each $B_r$ is the graph of a Borel map
$h_r\colon X_r\to Y$ with Borel domain $X_r\subseteq X$. Then
\[
N_B(x)=\sum_{r\ge1}\done_{X_r}(x),
\]
so $N_B$ is Borel as an $\bN\cup\{0,\infty\}$-valued function; since all sections are finite, it in fact takes values in $\bN\cup\{0\}$.

It remains to obtain the ordered enumeration. We first select the least element of each non-empty section.
The set of pairs $(x,y)\in B$ for which some smaller point of the same section exists is the projection to
$X\times Y$ of the Borel set
\[
\{(x,y,z)\colon (x,y)\in B,\ (x,z)\in B,\ z<y\}.
\]
The sections of this set over $(x,y)$ are countable, so this projection is Borel by the Luzin--Novikov theorem.
Hence the graph
\[
G_1=\{(x,y)\in B\colon y\text{ is the }<\text{-least element of }B_x\}
\]
is Borel. Since $G_1$ is the graph of a function on the Borel set $\{N_B\ge1\}$, this function is Borel; call it $e_1$.

Assume that $e_1,\ldots,e_j$ have been constructed. On the Borel set $\{N_B\ge j+1\}$, apply the preceding least-element argument to the Borel relation
\[
B^{(j+1)}=\{(x,y)\in B\colon N_B(x)\ge j+1\text{ and }e_i(x)<y\text{ for }1\le i\le j\}.
\]
This gives a Borel map $e_{j+1}$ selecting the least element of the remaining tail of $B_x$. Induction gives the claimed increasing enumeration.
\end{proof}

\begin{claim}[the address map]\label{claim2}
Let $X_n = \bigsqcup_{c_n\in\cC_n} R_n(c_n)$ and let
$b_n\colon X_n\to \cC_n$ be the function that maps $x\in R_n(c_n)$ to its unique representative
$c_n$. Then the function $b_n$ is Borel. In particular, $X_n$ is Borel.
\end{claim}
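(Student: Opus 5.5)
Our plan is to realise the graph of $b_n$ as a Borel subset of $X\times\cC_n$ whose sections are singletons (or empty), and then to invoke the Luzin--Novikov uniformization theorem (equivalently, the Luzin--Suslin theorem for graphs). Concretely, we consider
\[
B=\bigl\{(x,c)\in X\times\cC_n\colon (c,x)\in E_T,\ \rho(c,x)\in\lambda_n(c)\bigr\}.
\]
By definition of the tiles, $(x,c)\in B$ if and only if $x\in R_n(c)=T_{\lambda_n(c)}c$. By (T1) the tiles $R_n(c)$, $c\in\cC_n$, are pairwise disjoint, so every section $B_x$ has at most one point. The projection of $B$ to the first coordinate is exactly $X_n$, and $B$ is the graph of $b_n$.

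The main step is to check that $B$ is Borel. The orbit equivalence relation $E_T$ is Borel and the cocycle $\rho\colon E_T\to\bR^d$ is Borel, as shown in the Preliminaries. The map $\cC_n\ni c\mapsto\lambda_n(c)\in\cK$ is Borel by assumption. It therefore suffices to know that the membership relation
\[
M=\{(w,K)\in\bR^d\times\cK\colon w\in K\}
\]
is Borel in $\bR^d\times\cK$. This is in fact closed. If $w_k\to w$, $K_k\to K$ in the Hausdorff metric and $w_k\in K_k$, then $\operatorname{dist}(w_k,K)\le d_H(K_k,K)\to0$, and since $K$ is closed, $w\in K$. Then $B$ is the preimage of $M$ under the Borel map $(x,c)\mapsto(\rho(c,x),\lambda_n(c))$, defined on the Borel set $(X\times\cC_n)\cap\{(x,c)\colon (c,x)\in E_T\}$, so $B$ is Borel.

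Since the sections of $B$ are countable (indeed of size at most one), the Luzin--Novikov theorem shows that $\operatorname{proj}_X(B)=X_n$ is Borel and yields a Borel uniformization $f\colon X_n\to\cC_n$ with graph contained in $B$. Uniqueness of the section point forces $f=b_n$, so $b_n$ is Borel. Alternatively, Theorem~\ref{thm:L-N-ordered} applied to $B$ (finite sections) gives $N_B$ Borel, hence $X_n=\{N_B=1\}$ is Borel, and $b_n=e_1$ is Borel.

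We do not expect a serious obstacle. The only point requiring care is the Borelness of the membership relation in the Effros space, which the closedness argument above settles, and the correct orientation of the cocycle, $x=T_{\rho(c,x)}c$.
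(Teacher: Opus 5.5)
Your proof is correct and follows essentially the same route as the paper. Both arguments rest on three ingredients: the Borelness of the membership condition $w\in\lambda_n(c_n)$, the disjointness (T1) of the tiles to get uniqueness, and a Luzin-type theorem. The only difference is packaging: the paper applies Luzin--Suslin to the injective map $(c_n,w)\mapsto T_w c_n$ on $\{(c_n,w)\colon w\in\lambda_n(c_n)\}$ and writes $b_n=\pi_{\cC_n}\circ\Phi_n^{-1}$, whereas you describe the graph of $b_n$ directly through the Borel cocycle $\rho$ and apply Luzin--Novikov to its singleton sections.
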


\noindent{\em Proof of Claim~\ref{claim2}}: First, we note that the set
\[ \{(c_n, \la_n(c_n))\in \cC_n \times \cK\colon c_n\in\cC_n\}\] is Borel as a graph of a Borel
function~\cite[Theorem~12.4]{Kechris}, and that the set
\[ \cC_{n, \la_n} = \{(c_n, w)\in \cC_n\times\bR^d\colon w\in\la_n(c_n)\}\] is also Borel as the zero set
of a Borel function
\[
\cC_n \times \bR^d \ni (c_n, w) \mapsto \operatorname{dist}_{\bR^d} (w, \la_n(c_n)) \in\bR_{\ge 0}.
\]
Furthermore, the map
\[
\Phi_n\colon \cC_{n, \la_n} \ni (c_n, w) \mapsto T_w c_n \in X_n
\]
is Borel (since the action $T$ is Borel), injective (if $T_w c_n = T_{w'}c_n'$ with $w\in\la_n(c_n)$,
$w'\in \la_n(c_n')$, then $T_w c_n$ belongs to both tiles $R_n(c_n)$ and $R_n(c_n')$; hence,
$c_n=c_n'$, and since the action is free, $w'=w$), and is surjective. Therefore, by the Luzin--Suslin theorem,
the set $X_n$ is Borel, and
\[
\Phi_n^{-1}\colon X_n \ni x \mapsto (c_n, \rho(c_n, x))\in \cC_{n, \la_n}
\]
 is also Borel. Note that $b_n = \pi_{\cC_n} \circ \Phi_n^{-1}$, where $\pi_{\cC_n}$ is the
projection of $\cC_n\times \bR^d$ on the first coordinate, which obviously is continuous. Hence, the function
$b_n$ is Borel. \hfill $\Box$

\medskip\noindent{\em Proof of Lemma~\ref{lemma:p_n}}:
For each $1\le k\le n$, let
\[
X_k=\bigsqcup_{c_k\in\cC_k}R_k(c_k)
\]
and let $b_k\colon X_k\to\cC_k$ be the address map from Claim~\ref{claim2}. We extend it to all of $X$ by adding
$\infty$ to $\cC_k$ as an isolated point and setting
\[
b_k^*(x) =
\begin{cases}
b_k(x), & x\in X_k, \\
\infty, & x\in X\setminus X_k.
\end{cases}
\]
Each map $b_k^*\colon X\to\cC_k\cup\{\infty\}$ is Borel.

For $m<n$, set
\begin{align*}
A_{m, n} &= \{(c_m, c_n): b_n^*(c_m)=c_n\}, \\
B_{m, k} &= \{(c_m, c_n)\colon b_k^*(c_m)=\infty\},\qquad m<k<n,
\end{align*}
and define
\begin{align*}
I_{m, n} &=
\{(c_m, c_n)\colon b_n^*(c_m)=c_n
\ {\rm and\ } b_k^*(c_m)=\infty\ {\rm for\ every\ }m<k<n \} \\
&= A_{m, n} \cap \bigcap_{m+1\le k \le n-1} B_{m, k}.
\end{align*}
The sets $A_{m,n}$ and $B_{m,k}$ are Borel, hence so is $I_{m,n}\subset\cC_m\times\cC_n$.

Now consider the Borel relation
\[
\widehat I_n
=\{(c_n,(c_m,m))\in\cC_n\times\widetilde\cC_{<n}\colon m<n,\ (c_m,c_n)\in I_{m,n}\},
\quad
\widetilde\cC_{<n}=\bigsqcup_{m<n}(\cC_m\times\{m\}).
\]
Its section over $c_n$ is precisely the finite set of tagged immediate predecessors of $c_n$.
We endow $\widetilde\cC_{<n}$ with the lexicographic Borel order fixed in the definition of (T5).
The ordered finite-section form of the Luzin--Novikov theorem applied to $\widehat I_n$ gives
a Borel counting function
\[
p_n(c_n)=|(\widehat I_n)_{c_n}|
\]
and, on each Borel set $\{p_n=p\}$, Borel maps
\[
L_j(c_n)=(c_{m_j},m_j)\in\widetilde\cC_{<n},\qquad 1\le j\le p,
\]
which list the tagged immediate predecessors in increasing order.

The level maps $m_j$ are Borel as compositions of $L_j$ with the projection
\[
\pi_{\sf lev}\colon\widetilde\cC_{<n}\to\{1,\ldots,n-1\}.
\]
The shape maps are Borel because the map
\[
\widetilde\cC_{<n}\ni(c_m,m)\mapsto \la_m(c_m)\in\cK
\]
is Borel on the finite disjoint union $\widetilde\cC_{<n}$. Finally, the relative-position maps
$c_n\mapsto\rho(c_n,c_{m_j})$ are Borel as compositions with the Borel cocycle
$\rho\colon E_T\to\bR^d$.

Thus, on each Borel set $\{p_n=p\}$, the maps $\bar m_n$, $\bar\lambda_n$, and $\bar\rho_n$
are Borel. Since the pieces $\{p_n=p\}$ are Borel, these piecewise-defined maps combine to Borel maps into the corresponding countable disjoint unions. This proves Lemma~\ref{lemma:p_n}.
\mbox{} \hfill $\Box$

\bigskip
\medskip

\noindent Konstantin Slutsky, Department of Mathematics, Iowa State University, Ames, Iowa, USA \newline
{\tt kslutsky@iastate.edu}

\medskip\noindent Mikhail Sodin,
School of Mathematics, Tel Aviv University, Tel Aviv, Israel,
\newline {\tt sodin@tauex.tau.ac.il}

\medskip\noindent Aron Wennman, Department of Mathematics, KU Leuven, Leuven, Belgium \newline
{\tt aron.wennman@kuleuven.be}
\end{document}